\documentclass[a4paper,12pt]{article} 

\title{Arithmetic geometry of the moduli \\ stack of Weierstrass fibrations over $\mathbb{P}^1$}
\date{}
\author{Jun--Yong Park and Johannes Schmitt}

\usepackage{amsmath,amssymb,graphicx,amsthm,amsfonts,verbatim,mathtools,thmtools,fullpage,tikz-cd,url}
\usepackage[normalem]{ulem}
\usepackage[shortlabels]{enumitem}
\usepackage[all,cmtip]{xy}
\usepackage{hyperref}

\newtheorem{thm}{Theorem}[section]
\newtheorem{Mthm}[thm]{Main Theorem}
\newtheorem{lem}[thm]{Lemma}
\newtheorem{cor}[thm]{Corollary}
\newtheorem{prop}[thm]{Proposition}
\theoremstyle{definition}
\newtheorem{defn}[thm]{Definition}

\newtheorem{ques}[thm]{Question}

\newtheorem{rmk}[thm]{Remark}

\newtheorem{exmp}[thm]{Example}

\setlength\arraycolsep{2pt}

\newcommand{\iso}{\cong}

\newcommand{\Mg}{\overline{\mathcal{M}}}

\newcommand{\Me}{\overline{\mathcal{M}}_{1,1}}

\def\git{/\!/ }

\newcommand{\WeierP}{\mathcal{W}^{\mathbb{P}^1}}
\newcommand{\Weier}{\mathcal{W}}
\newcommand{\WeiB}{\overline{\mathcal{W}}}

\newcommand{\UDelta}{U_\Delta}
\newcommand{\Umin}{U_{\mathrm{min}}}
\newcommand{\Usf}{U_{\mathrm{sf}}}

\newcommand{\Pcv}{\mathcal{P}(\vec{\lambda})}

\newcommand{\Pov}{\mathcal{P}(\vec{\Lambda})}

\newcommand{\Ac}{\mathcal{A}}
\newcommand{\Bc}{\mathcal{B}}

\newcommand{\Ec}{\mathcal{E}}
\newcommand{\Hc}{\mathcal{H}}

\newcommand{\Pc}{\mathcal{P}}

\newcommand{\M}{\mathcal{M}}
\newcommand{\Zc}{\mathcal{Z}}

\newcommand{\lambdavec}{{\vec{\lambda}}}
\newcommand{\Lambdavec}{{\vec{\Lambda}}}

\newcommand{\Xc}{\mathcal{X}}

\newcommand{\Yc}{\mathcal{Y}}

\newcommand{\Q}{\mathbb{Q}}
\newcommand{\N}{\mathbb{N}}

\newcommand{\Pb}{\mathbb{P}}
\newcommand{\Ab}{\mathbb{A}}
\newcommand{\A}{\mathbb{A}}

\newcommand{\Fb}{\mathbb{F}}
\newcommand{\Gb}{\mathbb{G}}
\newcommand{\Lb}{\mathbb{L}}

\newcommand{\Zb}{\mathbb{Z}}

\newcommand{\Spec}{\mathrm{Spec}}

\DeclareMathOperator{\PGL}{PGL}

\DeclareMathOperator{\PSL}{PSL}
\DeclareMathOperator{\GL}{GL}
\DeclareMathOperator{\SL}{SL}

\DeclareMathOperator{\Stab}{Stab}
\DeclareMathOperator{\Sym}{Sym}

\DeclareMathOperator{\Hom}{Hom}

\DeclareMathOperator{\Aut}{Aut}

    \begin{document}

    \maketitle

    \begin{abstract}
    Coarse moduli spaces of Weierstrass fibrations over the (unparameterized) projective line were constructed by the classical work of [Miranda] using Geometric Invariant Theory. In our paper, we extend this treatment by using results of [Romagny] regarding group actions on stacks to give an explicit construction of the moduli stack $\mathcal{W}_{n}$ of Weierstrass fibrations over an unparameterized $\mathbb{P}^{1}$ with discriminant degree $12n$ and a section. We show that it is a smooth algebraic stack and prove that for $n\geq 2$, the open substack $\mathcal{W}_{\mathrm{min},n}$ of minimal Weierstrass fibrations is a separated Deligne--Mumford stack over any base field $K$ with $\mathrm{char}(K) \neq 2,3$ and $\nmid n$. Arithmetically, for the moduli stack $\mathcal{W}_{\mathrm{sf},n}$ of stable Weierstrass fibrations, we determine its motive in the Grothendieck ring of stacks to be $\{\mathcal{W}_{\mathrm{sf},n}\} = \mathbb{L}^{10n - 2}$ in the case that $n$ is odd, which results in its weighted point count to be $\#_q(\mathcal{W}_{\mathrm{sf},n}) =  q^{10n - 2}$ over $\mathbb{F}_q$. In the appendix, we show how our methods can be applied similarly to the classical work of [Silverman] on coarse moduli spaces of self-maps of the projective line, allowing us to construct the natural moduli stack and to compute its motive.
    \end{abstract}


    \tableofcontents

    \newpage

    \section{Introduction}
    \label{sec:intro}
 
    Recall that a \emph{Weierstrass fibration} over a smooth algebraic curve $C$ is a flat, proper morphism $f : X \to C$ with irreducible fibres of arithmetic genus $1$ such that the general fibre is smooth, together with a section of $f$ not passing through the singular points of the fibres. The \emph{discriminant} $\Delta$ of such a fibration is a section of a suitable line bundle on $C$ having zeros precisely at the points $t$ of $C$ with singular fibres $X_t$.
    
    By generalizing the notion above to families of Weierstrass fibrations parameterized by a scheme $S$, one can then define the moduli stack $\Weier_n$ of Weierstrass fibrations over an \textit{unparameterized} $\Pb^1$ with discriminant degree $12n$. This means we consider Weierstrass fibrations such that the curve $C$ above is isomorphic to $\Pb^1$, without including such an isomorphism as part of the data. In our paper, we give an explicit construction of the moduli stack $\Weier_n$, directly extending the seminal work of \cite[Theorem 7.2]{Miranda} where the corresponding coarse moduli space was constructed via Geometric Invariant Theory (see also \cite{Seiler}). 

    To describe the construction, recall that a Weierstrass fibration over (a parameterized) $\Pb^1$ of discriminant degree $12n$ can always be cut inside a $\Pb^2$-bundle over $\Pb^1$ by an equation
    \[
    Y^2Z = X^3 + AXZ^2+BZ^3\,,
    \]
    over $\Spec(\Zb[1/ 6])$ where $X,Y,Z$ are coordinates on the bundle and 
    \begin{equation} \label{eqn:UvecspacePov}
        (A,B) \in H^0(\Pb^1, \mathcal{O}_{\Pb^1}(4n)) \oplus H^0(\Pb^1,\mathcal{O}_{\Pb^1}(6n)) \eqqcolon V
    \end{equation}
    define the so-called \emph{Weierstrass data} of the fibration.
    The sections $A,B$ satisfy that the discriminant $\Delta = 4 A^3 + 27 B^2$ is not identically zero, and the pair $(A,B)$ is unique up to the scaling action $\lambda \cdot (A,B) = (\lambda^4 A, \lambda^6 B)$ of $\lambda \in \Gb_m$.
    
    Correspondingly, let $\mathbb{G}_m$ act on \eqref{eqn:UvecspacePov} with weight $4$ on the first summand and with weight $6$ on the second summand and consider the weighted projective stack
    \[
    \mathcal{P}(V) = [V \setminus \{0\} / \mathbb{G}_m]\,.
    \]
    Inside $\mathcal{P}(V)$ we have several open substacks that are natural given the interpretation of the points $[A,B] \in \mathcal{P}(V)$ as Weierstrass data:
    \begin{itemize}
        \item $\UDelta \subset \mathcal{P}(V)$ is the locus where the {discriminant} $\Delta = 4 A^3 + 27 B^2$ is nonzero\footnote{~This is well-defined since $\Delta$ is homogeneous of weight $12n$ with respect to the $\mathbb{G}_m$-action.},
        \item $\Umin \subset \mathcal{P}(V)$ is the locus such that $\Delta$ is nonzero and moreover there exists no geometric point $p \in \Pb^1$ such that $\mathrm{ord}_p(A) \geq 4$ and $\mathrm{ord}_p(B) \geq 6$, 
        \item $\Usf \subset \mathcal{P}(V)$ is the locus such that there exists no geometric point $p \in \Pb^1$ such that $A(p)=B(p)=0$. 
    \end{itemize}
    It is easy to see that we have strict inclusions:
    \[
    \Usf \subsetneq \Umin \subsetneq \UDelta \subsetneq \mathcal{P}(V)\,.
    \]
    Let $\WeierP_n$ be the stack parameterizing families of Weierstrass fibrations over the fixed base $\Pb^1$ of discriminant degree $12n$. There exists a natural map
    \begin{equation} \label{eqn:Phidefintro}
        \Phi : \WeierP_n \to \UDelta \subseteq \mathcal{P}(V)
    \end{equation}
    associating the Weierstrass data $[A,B]$ to a given fibration (see Section \ref{Sect:Fibparametrized} for the precise definitions of $\WeierP_n$ and $\Phi$). 

    \medskip

    In our first theorem, we show that the known correspondence between Weierstrass fibrations and Weierstrass data implies that $\Phi$ is an open embedding.
    \begin{thm}\label{Thm:Weierstrass_Moduli}
    For any degree $n \in \mathbb{Z}_{\geq 1}$ and any base field $K$ with $\mathrm{char}(K) \neq 2,3$, the moduli stack $\WeierP_n$ is isomorphic to the open substack $\UDelta \subseteq \mathcal{P}(V)$ of the weighted projective stack $\mathcal{P}(V)$ via the map $\Phi$ from \eqref{eqn:Phidefintro}.
    Moreover, under this correspondence the open substacks of minimal (resp. stable) Weierstrass fibrations in $\WeierP_n$ correspond to the substack $\Umin$ (resp. $\Usf$) inside $\UDelta$.\footnote{~Again, see Section \ref{Sect:Fibparametrized} for a reminder of the definitions of minimal and stable Weierstrass fibrations.}
    \end{thm}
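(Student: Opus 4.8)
The plan is to check that $\Phi$ induces an equivalence of groupoids $\WeierP_n(S) \xrightarrow{\sim} \UDelta(S)$ functorially in every test scheme $S$ over $K$, by relativizing the classical correspondence between Weierstrass fibrations and Weierstrass data over the base $S$. First I would unwind the groupoid of $S$-points of $\mathcal{P}(V) = [V \setminus \{0\}/\Gb_m]$: such a point is a line bundle $\Nc$ on $S$ (from the $\Gb_m$-torsor) together with a $\Gb_m$-equivariant map to $V \setminus \{0\}$, which, since $\Gb_m$ acts with weights $4$ and $6$ on the two summands, is precisely a pair of sections $A \in H^0(\Pb^1_S, \Oc(4n) \otimes \Nc^{\otimes 4})$ and $B \in H^0(\Pb^1_S, \Oc(6n) \otimes \Nc^{\otimes 6})$ that do not vanish simultaneously everywhere on any geometric fibre. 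The open substack $\UDelta$ is then cut out by requiring $\Delta = 4A^3 + 27 B^2 \in H^0(\Pb^1_S, \Oc(12n) \otimes \Nc^{\otimes 12})$ to be nonzero on every geometric fibre over $S$.

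Next I would make $\Phi$ explicit on $S$-points. Given a family of Weierstrass fibrations $f : X \to \Pb^1_S$ with section $\sigma$, I would form the fundamental line bundle $\Lc$ associated to $(f,\sigma)$, e.g. $\Lc = (R^1 f_* \Oc_X)^{\vee} = \sigma^* \omega_{X/\Pb^1_S}$, which has relative degree $n$ and hence factors uniquely as $\Lc \cong p_1^*\Oc(n) \otimes p_2^* \Nc$ for a line bundle $\Nc$ on $S$; this $\Nc$ is exactly the one appearing in the $\mathcal{P}(V)$-description. The relative Weierstrass embedding then exhibits $X$ as the closed subscheme of $\Pb(\Oc \oplus \Lc^{\otimes 2} \oplus \Lc^{\otimes 3})$ cut out by $Y^2 Z = X^3 + AXZ^2 + BZ^3$, recovering the data $(\Nc, A, B)$ once $\Lc$ is fixed, and this is $\Phi$. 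For essential surjectivity I would run this construction in reverse: from $(\Nc, A, B)$ with $\Delta$ fibrewise nonzero, set $\Lc = p_1^*\Oc(n)\otimes p_2^*\Nc$, form the projective bundle, and define $X$ by the Weierstrass equation, verifying flatness, properness, arithmetic genus $1$ of the fibres, the section $[X:Y:Z] = [0:1:0]$, irreducibility of fibres, and discriminant degree $12n$, with fibrewise nonvanishing of $\Delta$ guaranteeing the general fibre is smooth.

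The heart of the argument is full faithfulness. An isomorphism in $\WeierP_n(S)$ between $(X,\sigma)$ and $(X',\sigma')$ is an $S$-isomorphism over $\Pb^1_S$ carrying $\sigma$ to $\sigma'$, and I would show these correspond bijectively to isomorphisms $(\Nc, A, B) \to (\Nc', A', B')$ in $\UDelta(S)$. The key input is the classification of coordinate changes preserving the normal form $Y^2Z = X^3 + AXZ^2 + BZ^3$: in $\mathrm{char}(K) \neq 2,3$ the only such transformations are $(X,Y,Z) \mapsto (u^2 X, u^3 Y, Z)$ for a unit $u$ (possibly twisting $\Nc$), acting by $(A,B) \mapsto (u^4 A, u^6 B)$, which is exactly the weight-$(4,6)$ $\Gb_m$-action defining $\mathcal{P}(V)$. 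Thus the $\Hom$-groupoids, and in particular the automorphism groups $\Gb_m$, match on both sides. Finally I would match the open substacks: a fibration is minimal iff the equation cannot be blown down at any geometric point, i.e. no $p \in \Pb^1$ has $\mathrm{ord}_p(A) \geq 4$ and $\mathrm{ord}_p(B) \geq 6$, which is verbatim the condition defining $\Umin$; and it is stable precisely when no geometric point $p$ satisfies $A(p) = B(p) = 0$, matching $\Usf$.

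I expect full faithfulness together with the family-level bookkeeping to be the main obstacle: one must track the twisting line bundle $\Nc$ carefully through the Weierstrass embedding and confirm that, uniformly in families and crucially using $\mathrm{char}(K) \neq 2,3$, the isomorphisms of the normal form reduce to exactly the weight-$(4,6)$ $\Gb_m$-action with no spurious extra automorphisms (which do appear in characteristics $2$ and $3$). The existence and functoriality of the relative Weierstrass model — that every such fibration admits this presentation after the twist by $\Nc$, compatibly with base change — is the remaining technical point, but this is standard structure theory for Weierstrass fibrations in families.
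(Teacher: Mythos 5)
Your proposal is correct and follows essentially the same route as the paper: both relativize Miranda's classical correspondence by forming the fundamental line bundle, splitting off the twist $\Nc$ on $S$, reading off the Weierstrass data $(A,B)$ from the relative Weierstrass embedding, inverting via the Weierstrass equation in $\Pb(\Oc\oplus\Lc^{\otimes -2}\oplus\Lc^{\otimes -3})$, and matching automorphisms with the weight-$(4,6)$ scaling of $\Gb_m$. The identification of $\Umin$ and $\Usf$ via the order-of-vanishing and common-zero criteria is also exactly the paper's argument.
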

    For studying the stack $\Weier_n$ of Weierstrass fibrations over an unparameterized $\Pb^1$, it is natural to consider the action of $\mathrm{Aut}(\Pb^1) = \PGL_2$ on $\WeierP_n$ by composing the Weierstrass fibration with an automorphism of $\Pb^1$. 
    It is easy to see that this action is induced by an action on the ambient weighted projective stack $\mathcal{P}(V)$. Indeed, the action of an element of $\PGL_2$ on the homogeneous coordinates $X,Y$ of $\Pb^1$ translates to an action on the global sections $A,B$ of $\mathcal{O}_{\Pb^1}(4n), \mathcal{O}_{\Pb^1}(6n)$ which are the homogeneous coordinates of $\mathcal{P}(V)$. Note that since both $\WeierP_n$ and $\mathcal{P}(V)$ are themselves stacks, the formal definition of these actions requires one to use the notion of group actions on stacks presented in \cite{Romagny}.
    
    The next result then confirms that the moduli stack $\Weier_n$, whose objects are Weierstrass fibrations over an (unparameterized) family of smooth rational curves, is indeed given as the quotient under the action above. This concrete construction allows us to conclude many nice properties of this stack (and the substacks of minimal and stable fibrations).



    \begin{Mthm}\label{Thm:Weierstrass_Stack_Quotient}
    Fix a degree $n \in \Zb_{\geq 1}$ and a base field $K$ with $\mathrm{char}(K) \neq 2,3$. 
    \begin{enumerate}[label=\alph*)]
        \item The moduli stack $\Weier_n$ (for $n\geq 1$) of Weierstrass fibrations over an unparameterized $\mathbb{P}^{1}$ with discriminant degree $12n$ is isomorphic to the quotient
        \[
        \Weier_n \cong [\WeierP_n / \PGL_2] = [U_{\Delta} / \PGL_2]\,,
        \]
        which is naturally an open substack of the stack $[\mathcal{P}(V) / \PGL_2]$. In particular, $\Weier_n$ is a smooth, irreducible algebraic stack of finite type with affine diagonal.
        \item Inside $\Weier_n$, the open substacks  $\Weier_{\mathrm{min},n}$ (for $n\geq 2$) of minimal Weierstrass fibrations and $\Weier_{\mathrm{sf},n}$ (for $n\geq 1$) of stable Weierstrass fibrations are separated Deligne--Mumford stacks for $\mathrm{char}(K) \nmid n$, which are tame for $\mathrm{char}(K)=0$ or $>12n$.
    \end{enumerate}
    \end{Mthm}
    First, let us note that some bounds on the characteristic are indeed necessary to guarantee tameness of the stabilizer groups in the second part of the theorem. Indeed, the Weierstrass fibration associated to $[A:B] = [X^{4n}: Y^{6n}] \in \mathcal{P}(V)$ is invariant under scaling $X$ by an element of $\mu_{4n}$ and by scaling $Y$ under an element of $\mu_{6n}$. Together, these transformations generate a copy of $\mu_{12n}$ inside $\PGL_2$ which acts as an automorphism of the fibration, so that the automorphism group is not tame when $\mathrm{char}(K)$ divides $12n$. For the missing case $\Weier_{\mathrm{min},n}$ in part b) above, see Remark \ref{Rmk:Weiermin1}.
    
    Second, we can make a comparison to the treatment in \cite{Miranda}. There, Miranda considers the weighted projective space $\mathbb{P}(V)$, the coarse moduli space of $\mathcal{P}(V)$, and takes a GIT-quotient under the action of $\SL_2$ induced by the $\PGL_2$-action above. Away from the strictly semistable points, the resulting variety is the coarse moduli space of a suitable open substack of $\Weier_n$. 

    \medskip
    
    
    One of the advantages of working with the moduli stacks of Weierstrass fibrations instead of their coarse moduli spaces is that they naturally allow us to define some meaningful arithmetic invariants. One of them is their \emph{weighted point count} over the finite field $\Fb_q$, where each $\Fb_q$-point $x$ of the stack is weighted by $1/|\mathrm{Aut}(x)|$. 
    When the degree $n$ is odd, we can not only compute this count for  the moduli stack $\Weier_{\mathrm{sf},n}$, but in fact we can obtain an even finer invariant of this stack, namely its motivic class inside $K_0(\mathrm{Stck}_{K})$ the \emph{Grothendieck ring of $K$-stacks} introduced by \cite{Ekedahl}.
    \begin{Mthm} \label{Thm:WeierstrassMotive}
    Let $\mathrm{char}(K) \neq 2,3$ and fix an odd degree $n$. Then the motive $\{\Weier_{\mathrm{sf},n}\} \in K_0(\mathrm{Stck}_{K})$ is given by
        \[\{\Weier_{\mathrm{sf},n}\} = \Lb^{10n - 2}\;,\]
    where $\Lb = \{\mathbb{A}^1\}$ is the \emph{Lefschetz motive}. Thus we acquire the weighted point count over $\Fb_q$ as
        \[\#_q\left(\Weier_{\mathrm{sf},n}\right) = q^{10n - 2}\;.\]
    Moreover, from the inclusions
    \[
    \Weier_{\mathrm{sf},n} \subsetneq \Weier_{\mathrm{min},n} \subsetneq \Weier_n \subsetneq \left[\mathcal{P}(V) / \PGL_2\right]\,
    \]
    we then obtain the following bounds
    \begin{align*}
    q^{10n - 2} = \#_q\left(\Weier_{\mathrm{sf},n}\right)  \le \#_q\left(\Weier_{\mathrm{min},n}\right) & \le \#_q\left(\Weier_{n}\right) \leq \#_q\left( \left[\mathcal{P}(V) / \PGL_2\right]   \right) = \frac{q^{10n+2}-1}{q(q-1)(q^2-1)} \\
    & = q^{10n-2} + q^{10n-3} + 2 q^{10n-4} + 2 q^{10n-5} + O(q^{10n-6}) \,.    
    \end{align*}
    \end{Mthm}
    First, let us note that in the proof of the above result, the restriction to odd degree $n$ comes from the fact that this allows us to convert the $\PGL_2$-quotient $\left[\mathcal{P}(V) / \PGL_2\right]$ into a (twisted) $\GL_2$-quotient $[V / \GL_2]$. This is crucial for the computation in the Grothendieck ring since, in contrast to $\PGL_2$, the group $\GL_2$ is special and thus behaves much nicer in computations of motives (see Section \ref{sec:Motive} for details). However, it is natural to expect that the result above holds verbatim for even degrees $n$.

    \medskip
    
    
    Second, in the above theorem it is quite interesting that the shape of the motive of the moduli stack $\Weier_{\mathrm{sf},n}$ is so simple, and just given by the power of Lefschetz motive $\mathbb{L}^{\dim \Weier_{\mathrm{sf},n}}$. Given that the coarse moduli space of $\Weier_{\mathrm{sf},n}$ is rational by \cite{Lejarraga}, it is a valid question whether this equality of motives can be obtained from a suitable locally-closed filtration of the moduli stack by affine pieces.

    \medskip
    
    Finally, one could consider the natural problem of computing arithmetic invariants of $\Weier_{\mathrm{min},n}$. Here, inside the missing piece $\Weier_{\mathrm{min},n} \setminus \Weier_{\mathrm{sf},n}$ we start seeing the appearance of additive singular fibers, and we might expect to obtain a stratification geometrically determined by \textit{Tate's algorithm} in relation to the \textit{Kodaira--N\'eron classification} of singular fibers. And in this regard, it would be interesting to see whether the mixed Tate property of the moduli stacks persists for the larger open substack $\Weier_{\mathrm{min},n}$. If so, this would imply the emergence of lower order terms $q^m$ (for $m \leq 10n-3$) in the weighted point count $\#_q\left(\Weier_{\mathrm{min},n}\right)$ over $\Fb_q$.
    We will study the arithmetic geometry of the moduli stack of \textit{unstable elliptic surfaces} in a forthcoming work.

    \subsection*{Methods \& Techniques}
    It turns out that many of the above results follow more generally from studying moduli stacks of maps from a parameterized (or unparameterized) $\Pb^1$ to some algebraic stack $\M$. The connection to Weierstrass fibrations relies on the isomorphism 
    $$(\Me)_K \cong [ (\mathrm{Spec}~K[a,b]-(0,0)) / \Gb_m ] = \Pc_K(4,6)$$
    between the moduli stack $\Me$ of stable curves of genus $1$ with a marked point with the weighted projective stack $\Pc(4,6)$, which works over $\Spec(\Zb[1/ 6])$ (see \cite[Proposition 3.6]{Hassett}). Then, given a morphism $f : \Pb^1 \to \Me$, we obtain a Weierstrass fibration over $\Pb^1$ by pulling back the universal curve $\overline{\mathcal{C}}_{1,1} \to \Me$ (and its section) along $f$. The discriminant degree $12n$ is then uniquely determined by the property that $f^* \mathcal{O}_{\Pcv}(1) = \mathcal{O}_{\Pb^1}(n)$. It is not hard to see that \emph{any} Weierstrass fibration can be uniquely determined like this - essentially the Weierstrass data $[A:B]$ is \emph{precisely} the data needed to define the morphism to $\Pc_K(4,6)$ (see Section \ref{Sect:Fibparametrized}). Thus the moduli stack of stable Weierstrass fibrations over (a parameterized) $\Pb^1$ is isomorphic to the Hom stack (c.f. \cite{Olsson})
    $$\Hom_n(\Pb^1,\Pcv) \coloneqq \left\{f\colon  \Pb^1 \to \Pcv : f \text{ morphism with }f^* \mathcal{O}_{\Pcv}(1) = \mathcal{O}_{\Pb^1}(n) \right\}$$
    for $\lambdavec = (4,6)$. 
    Here in general, we write $\Pcv=[(\Ab_{x_0, \dotsc, x_N}^{N+1}\setminus 0)/\Gb_m]$ for the weighted projective stack where $\Gb_m$ acts with weights 
    $$\lambdavec = (\lambda_0, \ldots, \lambda_N) \in \mathbb{Z}_{\geq 1}^{N+1},\ (N \geq 1)\,.$$
    In the following, we consider these $\Hom$-stacks for arbitrary targets $\Pcv$, which have been previously studied in \cite{HP, PS, HP2}. Many of the weighted projective stacks $\Pcv$ have modular interpretations (see  Example~\ref{exmp:many_mod}), resulting in a modular interpretation of $\Hom_n(\Pb^1,\Pcv)$ by pulling back a suitable universal family as before. See the end of the section for a discussion in the case $\Pc(1,3) \iso \Me[\Gamma_1(3)]$ of the moduli stack of generalized elliptic curves with $[\Gamma_1(3)]$-level structure.
    

    Many of the phenomena that we saw for the spaces of Weierstrass fibrations (related to $\mathcal{P}(4,6)$) now generalize to arbitrary $\Pcv$. First, a morphism to $\Pcv$ is determined by a nonzero element of
    \[
    (u_0, \ldots, u_N) \in \bigoplus_{i=0}^N H^0(\Pb^1, \mathcal{O}_{\Pb^1}(\lambda_i n)) = V\,,
    \]
    up to the scaling of $\Gb_m$ of weight $\lambda_i$ on $u_i$. Thus $\Hom_n(\Pb^1,\Pcv)$ is naturally an open substack of the weighted projective stack $\mathcal{P}(V)$.\footnote{~More generally, given any Deligne--Mumford stack $\Xc$ which can be embedded into a weighted projective stack $\Xc \hookrightarrow \Pcv$, the map $\Hom_n(\Pb^1,\Xc) \hookrightarrow \Hom_n(\Pb^1,\Pcv) \subsetneq \Pov$ represents the Hom stack $\Hom_n(\Pb^1,\Xc)$ as a locally closed substack of $\Pov$ the associated ambient weighted projective stack. The existence of a similar class of ambient algebraic stacks for arbitrary non-cyclotomic Deligne--Mumford stacks $\Xc$ is much less clear.} We denote this stack by $\Pov = \mathcal{P}(V)$ to stress the dependence on $\lambdavec$. Then the natural action of $\PGL_2 = \mathrm{Aut}(\Pb^1)$ on $\Hom_n(\Pb^1,\Pcv)$ (by precomposition) extends to an action on $\Pov$ and using \cite{Romagny} we can define quotients
    \begin{equation*}
    \M_{n}^{\lambdavec} \coloneqq \left[\Hom_n(\Pb^1,\Pcv)~/~\PGL_2\right] \subseteq \left[\Pov~/~\PGL_2\right]\,.
    \end{equation*}
    Below, we use GIT-techniques from \cite{GIT}, applied to the action of $\PGL_2$ on the coarse moduli space $\mathbb{P}(\Lambdavec)$ of $\Pov$, to both obtain a moduli space for $\M_{n}^{\lambdavec}$ and to construct some natural (partial) compactifications. For this purpose let
    \[
    \mathbb{P}(\Lambdavec)^{s} \subseteq \mathbb{P}(\Lambdavec)^{ss} \subseteq \mathbb{P}(\Lambdavec)\ \text{ and }\ \Pov^{s} \subseteq \Pov^{ss} \subseteq \Pov
    \]
    be the loci of (semi)stable points for the natural linearization of the $\PGL_2$-action and denote by
    \[
    M_{n}^{\lambdavec,s} \subseteq M_{n}^{\lambdavec,ss}\ \text{ and }\ \M_{n}^{\lambdavec,s} \subseteq \M_{n}^{\lambdavec,ss} \subseteq [\Pov/\PGL_2]
    \]
    the associated GIT (resp. stack) quotients under $\PGL_2$. In Proposition \ref{Prop:stablesemistablecrit} we give an explicit criterion for (semi)stability, in particular showing that $\Hom_n(\Pb^1,\Pcv)$ is always contained in the stable locus and allowing us to write $M_n^{\lambdavec} \subseteq M_{n}^{\lambdavec,s}$ for its image in $M_{n}^{\lambdavec,s}$.
    Then combining techniques from GIT with an analysis of the $\PGL_2$-stabilizers on $\Pov$, we obtain Theorem \ref{Thm:Moduli_GIT} below. Here, the characteristic of the base field will again be important, and the behaviour is particularly nice when
    \begin{equation} \label{eqn:charKintro}
        \mathrm{char}(K) = 0 \ \text{ or } \mathrm{char}(K) > \max_{0 \leq i<j\leq N}\mathrm{lcm}(\lambda_i, \lambda_j) \cdot n\,.
    \end{equation}
    The geometric significance of the maximum over $\mathrm{lcm}(\lambda_i, \lambda_j) \cdot n$ is that this number is the greatest degree that a map $\Pb^1 \to \Pcv$ can have onto its image.
    \begin{thm}\label{Thm:Moduli_GIT}
    Fix a base field $K$ and a vector $\lambdavec \in \mathbb{Z}_{\geq 1}^{N+1}$ for $N \geq 1$.
    \begin{enumerate}[label=\alph*)]
        \item The stack $[\Pov/\PGL_2]$ and its open substacks $\M_n^{\lambdavec}$, $\M_{n}^{\lambdavec,s}$, $\M_{n}^{\lambdavec,ss}$ are smooth, irreducible algebraic stacks of finite type with affine diagonals.
        \item The stacks $\M_n^{\lambdavec}$ and $\M_{n}^{\lambdavec,s}$ are separated and for  $\mathrm{char}(K)$ satisfying \eqref{eqn:charKintro}, they are tame Deligne--Mumford stacks.\footnote{~In case of $\M_n^{\lambdavec}$, the Deligne--Mumford property in fact only requires that $\mathrm{char}(K)$ does not divide any of the $\lambda_i$ or $n$.}
        \item The schemes $M_{n}^{\lambdavec}, M_{n}^{\lambdavec,s}$ and $M_{n}^{\lambdavec,ss}$ are all integral, normal varieties, with $M_{n}^{\lambdavec,ss}$ projective and $M_{n}^{\lambdavec}, M_{n}^{\lambdavec,s}$ are the coarse moduli spaces for $\M_{n}^{\lambdavec}, \M_{n}^{\lambdavec,s}$.
        \item In the case that both degree $n$ and all weights $\lambda_i$ are odd, we have that stable and semistable points coincide and that $\M_{n}^{\lambdavec, s}=\M_{n}^{\lambdavec, ss}$ is a proper algebraic stack.
    \end{enumerate}
    \end{thm}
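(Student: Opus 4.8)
The plan is to derive all four parts from the global quotient presentation $[\Pov/\PGL_2]$ together with classical Geometric Invariant Theory \cite{GIT} applied to the $\SL_2$-action on the coarse space $\Pb(\Lambdavec)$. For part (a), I would start from the fact that $V\setminus\{0\}$ is a smooth, irreducible, quasi-affine variety of finite type, so that $\Pov=[(V\setminus 0)/\Gb_m]$ is smooth, irreducible, of finite type and has affine diagonal (the latter since $\Gb_m$ is affine and $V\setminus 0$ is quasi-affine). Then $V\setminus 0\to[\Pov/\PGL_2]$ is a smooth surjective atlas whose associated groupoid is the action groupoid for the affine group scheme $\Gb_m\times\PGL_2$; hence both projections of $(V\setminus 0)\times_{[\Pov/\PGL_2]}(V\setminus 0)$ to $V\setminus 0$ are affine, giving affine diagonal, while smoothness, irreducibility and finite type descend along the atlas. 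Since $\M_n^{\lambdavec}\subseteq\M_n^{\lambdavec,s}\subseteq\M_n^{\lambdavec,ss}\subseteq[\Pov/\PGL_2]$ are nonempty open substacks, they inherit all of these properties.

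For part (b) I would first reduce separatedness to the stable locus: as $\M_n^{\lambdavec}$ is open in $\M_n^{\lambdavec,s}$, it suffices to treat the latter, and by \cite{GIT} the $\PGL_2$-action on $\Pov^s$ is proper with finite stabilisers, so $[\Pov^s/\PGL_2]$ is separated. The technical core is a careful study of the stabiliser group schemes $\Stab([u])\subseteq\PGL_2$. The key mechanism is that $g\in\PGL_2$ fixes $[u]$ only if, after lifting $g$ to $\GL_2$, each $u_i$ is an eigenvector of $g$ acting on $\Sym^{\lambda_i n}$ of the standard representation. A nontrivial unipotent $g$ forces every $u_i$ to be proportional to a fixed power $y^{\lambda_i n}$ of a single linear form, i.e.\ the corresponding map to $\Pcv$ is constant, impossible for $n\geq 1$; the same eigenvector analysis on Lie algebras excludes the infinitesimal subgroup schemes $\mu_p$ and $\alpha_p$. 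Thus $\Stab([u])$ is finite, étale and of multiplicative type, giving the Deligne--Mumford property, with the weaker condition $\mathrm{char}(K)\nmid\lambda_i, n$ sufficing on the honest-map locus $\Hom_n(\Pb^1,\Pcv)$. For tameness I would bound the order: diagonalising a generator $g=\mathrm{diag}(t,1)$, the weighted eigenvector equations $g\cdot u_i=\zeta^{\lambda_i}u_i$ force $t^{N}=1$ with $|N|\leq\max_{i<j}\mathrm{lcm}(\lambda_i,\lambda_j)\,n$, so that \eqref{eqn:charKintro} makes every stabiliser order prime to $\mathrm{char}(K)$, rendering the stabilisers linearly reductive; the example $[X^{4n}:Y^{6n}]$ with stabiliser $\mu_{12n}$ shows the bound is attained.

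For part (c) I would invoke classical GIT for the reductive group $\SL_2$ acting linearly on $V$ with the natural linearisation on $\Pb(\Lambdavec)=\Proj$. As $\Pb(\Lambdavec)$ is an integral, normal, projective variety and GIT quotients preserve integrality and normality, $M_n^{\lambdavec,ss}=\Pb(\Lambdavec)^{ss}/\!/\SL_2$ is integral, normal and projective, with $M_n^{\lambdavec,s}$ and $M_n^{\lambdavec}$ the open geometric quotients of $\Pb(\Lambdavec)^s$ and of the image of $\Hom_n(\Pb^1,\Pcv)$; the coarse-space identifications follow since passing to the coarse space $\Pb(\Lambdavec)$ of $\Pov$ commutes with the GIT quotient, so the composites to $\Pb(\Lambdavec)^s/\PGL_2$ are initial among morphisms to schemes and bijective on geometric points. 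For part (d), assuming $n$ and all $\lambda_i$ odd, I would first show $\Pov^s=\Pov^{ss}$ using Proposition \ref{Prop:stablesemistablecrit}: by Hilbert--Mumford it suffices to test conjugates of the diagonal torus, for which the numerical invariant is a $\lambda_i$-weighted signed sum of the vanishing orders of the $u_i$ at the two torus-fixed points; strict semistability requires this integer to vanish, which a parity computation obstructs once all $\lambda_i n$ are odd. Then $\M_n^{\lambdavec,s}=\M_n^{\lambdavec,ss}$ is immediate, and properness follows as this stack is separated, of finite type with finite inertia, and has the projective coarse space $M_n^{\lambdavec,ss}$ of part (c), so the valuative criterion holds against the universally closed GIT quotient.

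I expect the main obstacle to be the stabiliser analysis in part (b): obtaining the sharp integral order bound $|\Stab([u])|\leq\max_{i<j}\mathrm{lcm}(\lambda_i,\lambda_j)n$ and, in positive characteristic, cleanly excluding the infinitesimal group schemes $\mu_p$ and $\alpha_p$, so as to separate the Deligne--Mumford hypothesis $\mathrm{char}(K)\nmid\lambda_i,n$ on $\M_n^{\lambdavec}$ from the stronger tameness hypothesis \eqref{eqn:charKintro} valid on all of $\M_n^{\lambdavec,s}$.
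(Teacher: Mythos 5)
Your overall architecture matches the paper's: part (a) from the quotient-stack formalism, part (b) from GIT-properness of the action on the stable locus plus a stabilizer analysis, part (c) from classical GIT together with the compatibility of coarse spaces with quotients (the paper does this via a variant of \cite[Lemma C.4]{Schmitt} with Keel--Mori replacing Alper's good moduli spaces, which is the careful way to phrase your "commutes with the GIT quotient" in positive characteristic), and part (d) from the half-integer parity observation plus properness of the coarse moduli map. The main methodological difference is in the stabilizer analysis: you propose a linear-algebra argument (eigenvector equations $g\cdot u_i=\zeta^{\lambda_i}u_i$ on $\Sym^{\lambda_i n}$, excluding unipotents and the infinitesimal groups $\mu_p,\alpha_p$ directly), whereas the paper argues geometrically -- for a finite underlying map it embeds the stabilizer into the symmetric groups of three general fibres of an induced generically \'etale map $\overline\varphi:\Pb^1\to\Pb^1$ of degree $\mathrm{lcm}(\lambda_i,\lambda_j)n$, and for points with many base points it invokes a separate analysis (Appendix \ref{Sect:stabfatpoints}) of stabilizers of fat-point subschemes of $\Pb^1$, which is where the exclusion of $\alpha_p$ actually happens and is genuinely delicate. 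Your eigenvector route is plausible but the claim that it "cleanly excludes $\mu_p$ and $\alpha_p$" is precisely the hard part you have deferred; the paper devotes an entire appendix to it.

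The concrete gap is in part (b) for $\M_n^{\lambdavec,s}$. You write that a nontrivial unipotent forces all $u_i$ proportional to $Y^{\lambda_i n}$, hence the map is constant, "impossible for $n\geq 1$". Constancy is only impossible on $\Hom_n(\Pb^1,\Pcv)$; the stable locus $\Pov^{s}$ contains points with base points and points whose underlying rational map contracts $\Pb^1$ (e.g.\ $u_i=a_i\,U^{\lambda_i/\ell}$ for a section $U$ with three or more distinct zeros), and these are exactly the points your argument does not reach. What rules out positive-dimensional (and non-reduced) stabilizers there is not non-constancy but the Hilbert--Mumford inequality itself: the paper shows that any bad-stabilizer point is in the $\PGL_2$-orbit of one with $u_i=a_i(X^eY^f)^{\lambda_i/\ell}$, $e+f=\ell n$, and then semistability at $0$ and $\infty$ forces $e<\ell n/2$ and $f<\ell n/2$, giving the contradiction $n<n$. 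Without this step your proof only establishes the Deligne--Mumford and tameness claims for $\M_n^{\lambdavec}$, not for $\M_n^{\lambdavec,s}$. Relatedly, your proposed uniform order bound $|\Stab([\vec u])|\leq\max_{i<j}\mathrm{lcm}(\lambda_i,\lambda_j)\,n$ is stronger than what the paper proves or needs (the paper only bounds the order of an ambient product of symmetric groups); it is defensible for the finite-map locus via a deck-transformation argument, but you should not rely on it for the all-monomial stable points, where again the stability inequality is the operative tool.
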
 

    For the original case of $\lambdavec = (4,6)$, the presence of strictly semistable points inside $\Pov$ prevents us from obtaining a proper moduli stack compactifying the moduli stack of stable Weierstrass fibrations. On the other hand, we note that the moduli stack $\Me[\Gamma_1(3)]$ of generalized elliptic curves with $[\Gamma_1(3)]$-level structure has an isomorphism $\Me[\Gamma_1(3)] \iso \Pc(1,3)$ as in \cite[Proposition 4.5]{HMe} through the equation 
    \[
    Y^2Z + AXYZ + BYZ^2 = X^3\,,
    \]
    over $\Spec(\Zb[1/ 3])$. As all the modular weights of $\Pc(1,3)$ are indeed odd numbers, we naturally acquire Theorem \ref{Thm:Level_SP} as the particularly interesting application of Theorem \ref{Thm:Moduli_GIT} that for an odd degree $n$ the stack
    \[
    \WeiB_{n}^{(1,3)} \coloneqq \M_{n}^{(1,3), s} = \M_{n}^{(1,3), ss}
    \]
    over a base field $K$ with $\mathrm{char}(K) \neq 3$ and $\nmid n$ is a smooth, proper and irreducible Deligne--Mumford stack of dimension $4n-2$ which admits a projective coarse moduli space. It is also tame for $\mathrm{char}(K)=0$ or $>3n$. This suggests that the proof of Main Theorem \ref{Thm:Weierstrass_Stack_Quotient}, which uses the analogous isomorphism $\Me \iso \Pc(4,6)$, could naturally be generalized to show that the stack $\M_n^{(1,3)}$ has a modular interpretation as the moduli stack of stable Weierstrass fibrations with $[\Gamma_1(3)]$-level structure and discriminant degree $12n$ over an unparameterized $\mathbb{P}^{1}$, and that $\WeiB_{n}^{(1,3)}$ renders a \textit{smooth modular compactification} of this moduli stack via GIT. While we do not pursue this direction further for now, we remark that the explicit construction of $\WeiB_{n}^{(1,3)}$ as a quotient stack would in particular open an avenue to study its intersection theory in future work.

    \medskip

    Lastly, our computations of motives and point counts presented in Main Theorem \ref{Thm:WeierstrassMotive} can be generalized for $\M_n^{\lambdavec}$ (see Theorem \ref{Thm:motivequotient1}).
    There we see that while the formula is more complicated than in Main Theorem \ref{Thm:WeierstrassMotive}, the motive $\{\M_n^{\lambdavec}\}$ is still of mixed Tate type. Concerning the weighted point counts over $\Fb_q$, the proof again implies that
    \[
        \left\{ \left[\Pov / \PGL_2\right] \right\} = \frac{\Lb^{|\vec{\lambda}|n+N+1}-1}{\Lb(\Lb-1)(\Lb^2-1)}\,,
    \]
    so that for any open substack $\M_n^{\lambdavec} \subseteq \mathcal{U} \subseteq [\Pov / \PGL_2]$ we have
    \[
    \#_q\left(\M_{n}^{\lambdavec}\right) \leq \#_q\left(\mathcal{U}\right) \leq \frac{q^{|\vec{\lambda}|n+N+1}-1}{q(q-1)(q^2-1)}\,.
    \]

    As a final remark, let us mention that the methods of our paper can be adapted to the study of moduli spaces of self-maps of $\Pb^1$ from \cite{Silverman}. For these spaces, one considers the action of $\PGL_2$ on $\Hom_n(\Pb^1, \Pb^1)$ given by \emph{conjugation}
    \[
    \varphi \cdot f = \varphi \circ f \circ \varphi^{-1} \text{ for }\varphi \in \PGL_2, f \in \Hom_n(\Pb^1, \Pb^1)\,.
    \]
    Then the paper \cite{Silverman} studies the GIT-quotient of this action. In Appendix \ref{sec:Appen_Sil} we explain how the methods above can be used to show that the corresponding quotient stack
    \[
    \M_n = \left[\Hom_n(\Pb^1, \Pb^1) / \PGL_2 \right]
    \]
    is a smooth, separated and tame Deligne--Mumford stack for $\mathrm{char}(K)=0$ or $\mathrm{char}(K)>n$ whose motive (for \emph{even} degree $n$) is given by $\{\M_n\} = \mathbb{L}^{2n-2}$ (see Theorem \ref{Thm:motivequotientSilverman}).

    \subsection*{Outline of the paper}
    In Section \ref{sec:Rational_Curves} we discuss the stacks of maps from the projective line to a weighted projective stack. We treat the case of the parameterized domain $\Pb^1$ in Section \ref{sec:ModuliofHom} and construct the space with an unparameterized domain in Section \ref{sec:PGL_Moduli} using a $\PGL_2$-stack quotient. We finish the proof of Theorem \ref{Thm:Moduli_GIT} in Section \ref{sec:GIT} using techniques from classical GIT. In Section \ref{sec:Motive} we recall the theory of Grothendieck rings of stacks and weighted point counts over $\Fb_q$ and prove Theorem \ref{Thm:WeierstrassMotive}.
    In Section \ref{Sect:Weierstrass} we present an abstract definition of the stacks of Weierstrass fibrations over the projective line and prove that these stacks are given by the stacks of morphisms to $\mathcal{P}(4,6)$ constructed in Section \ref{sec:Rational_Curves}. In particular, this allows us to conclude the proofs of Main Theorems \ref{Thm:Weierstrass_Stack_Quotient} and \ref{Thm:WeierstrassMotive}.
    
    In Appendix \ref{sec:Appen_Sil} we explain the applications of our methods to the spaces of self-maps of the projective line. Finally, in Appendix \ref{Sect:stabfatpoints} we give a self-contained discussion of the $\PGL_2$-stabilizer groups of subschemes of $\Pb^1$ given by unions of fat points, which is needed to show the Deligne--Mumford property of some of the quotient stacks we construct.

    \section{Moduli of maps from rational curves to weighted projective stacks}
    \label{sec:Rational_Curves} 

    \subsection{Hom-stacks of parameterized rational curves}
    \label{sec:ModuliofHom} 

    \par In this section, we recall basic facts regarding the $N$-dimensional weighted projective stack $\Pcv$ with the weight vector $\vec{\lambda} = (\lambda_0, \dotsc, \lambda_N)$ and the Hom-stack $\Hom_n(\Pb^1,\Pcv)$ of morphisms $\Pb^1 \to \Pcv$ from a parameterized $\Pb^1$ over a basefield $K$ with $\mathrm{char}(K) \nmid \lambda_i \in \N$ for every $i$.

    \begin{defn}\label{def:wtproj} 
    Let $\lambdavec = (\lambda_0, \ldots, \lambda_N) \in \mathbb{Z}_{\geq 1}^{N+1}$ be a vector of $N+1$ positive integers. Consider the affine space $U_\lambdavec = \Ab_{x_0, \dotsc, x_N}^{N+1}$ endowed with the action of $\Gb_m$ with weights $\lambdavec$ where an element $\zeta \in \Gb_m$ acts by
    \begin{equation}
    \zeta \cdot (x_0, \ldots, x_N) = (\zeta^{\lambda_0} x_0, \ldots, \zeta^{\lambda_N} x_N)\,.
    \end{equation}
    The $N$-dimensional weighted projective stack $\Pcv$ is then defined as the quotient stack
    \[
    \Pcv = \left[(U_{\lambdavec} \setminus \{0\})/\Gb_m \right]\,.
    \]
    \end{defn}

    \par It is a smooth Deligne--Mumford stack. For $\lambdavec = (1, \ldots, 1)$ we recover the classical projective space $\Pcv = \mathbb{P}^N$. The natural morphism $U_\lambdavec \to \Pcv$ is the total space of the \emph{tautological line bundle} $\mathcal{O}_{\Pcv}(-1)$ on $\Pcv$. As in the classical case, we denote by $\mathcal{O}_{\Pcv}(1)$ the dual of this line bundle.

     \par Note that $\Pcv$ is not an (effective) orbifold when $\gcd(\lambda_0, \dotsc, \lambda_N) \neq 1$. In this case, the finite cyclic group scheme $\mu_{\gcd(\lambda_0, \dotsc, \lambda_N)}$ is the generic stabilizer of $\Pcv$. When we need to emphasize the field $K$ of definition of $\Pcv$, we instead use the notation $\Pc_{K}(\vec\lambda)$. Note that when $K=\Fb_p$ for some prime $p$, the stack $\Pc(1,p)$ is not Deligne--Mumford, as $\underline{\Aut}_{[0:1]} \cong \mu_p$, which is not formally unramified over $\Fb_p$. Nevertheless, the following proposition shows that any $\Pcv$ behaves well in most characteristics as a tame Deligne--Mumford stack:
    
    \begin{prop}\label{prop:wtprojtame}
        The weighted projective stack $\Pcv = \Pc(\lambda_0, \dotsc, \lambda_N)$ is a tame Deligne--Mumford stack over $K$ if $\mathrm{char}(K)$ does not divide $\lambda_i \in \N$ for every $i$.
    \end{prop}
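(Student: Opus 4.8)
The plan is to exhibit $\Pcv$ as a quotient stack and then read off all the required properties from its stabilizer group schemes. Since $U_{\lambdavec} \setminus \{0\}$ is a smooth $K$-scheme on which $\Gb_m$ acts, the quotient $\Pcv = [(U_{\lambdavec} \setminus \{0\})/\Gb_m]$ is automatically a smooth algebraic stack of finite type, so the real content is the Deligne--Mumford property together with tameness. Both can be checked on the automorphism group schemes of geometric points: an algebraic stack with finite inertia is Deligne--Mumford precisely when its geometric stabilizers are finite and reduced (equivalently étale, equivalently the diagonal is unramified), and such a Deligne--Mumford stack is tame precisely when the orders of these finite stabilizers are prime to the residue characteristic.

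First I would compute the stabilizers. For a geometric point $\bar x = (x_0, \dots, x_N) \in (U_{\lambdavec} \setminus \{0\})(\overline{K})$, an element $\zeta \in \Gb_m$ fixes $\bar x$ if and only if $\zeta^{\lambda_i} x_i = x_i$ for all $i$, i.e. $\zeta^{\lambda_i} = 1$ for every index $i$ with $x_i \neq 0$. Carrying this out at the level of group schemes (and not merely on $\overline{K}$-points, which is the essential point in characteristic $p$) identifies the stabilizer with
\[
\Stab(\bar x) \;=\; \bigcap_{i\,:\,x_i \neq 0} \mu_{\lambda_i} \;=\; \mu_{d(\bar x)}, \qquad d(\bar x) = \gcd\{\lambda_i : x_i \neq 0\}\,,
\]
using that the scheme-theoretic intersection of $\mu_a$ and $\mu_b$ inside $\Gb_m$ is $\mu_{\gcd(a,b)}$. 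In particular every stabilizer is a finite diagonalizable group scheme, so $\Pcv$ has finite inertia and the notion of tameness applies.

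The Deligne--Mumford property then reduces to the assertion that each $\mu_{d(\bar x)}$ is étale over $K$, which holds exactly when $\mathrm{char}(K) \nmid d(\bar x)$. Here I would invoke the hypothesis: the index set defining $\bar x$ is nonempty (as $\bar x \neq 0$), so $d(\bar x)$ divides some $\lambda_i$; if $\mathrm{char}(K)$ divided $d(\bar x)$ it would divide that $\lambda_i$, contradicting $\mathrm{char}(K) \nmid \lambda_i$ for all $i$. Hence $\mathrm{char}(K) \nmid d(\bar x)$, every stabilizer is étale, and $\Pcv$ is Deligne--Mumford. For tameness, over $\overline{K}$ the group scheme $\mu_{d(\bar x)}$ becomes the constant group $\Z/d(\bar x)\Z$, whose order $d(\bar x)$ divides some $\lambda_i$ and is therefore prime to $\mathrm{char}(K)$; thus all geometric automorphism groups have order prime to the characteristic and $\Pcv$ is tame.

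The main point to be careful about — and the only genuine obstacle — is that the argument must be run scheme-theoretically rather than on $\overline{K}$-points. Over a field of characteristic $p$ dividing some $d(\bar x)$, the stabilizer $\mu_{d(\bar x)}$ is non-reduced (e.g.\ $\mu_p$), which is exactly the failure of the Deligne--Mumford condition illustrated by $\Pc(1,p)$ over $\Fb_p$ in the discussion preceding the proposition. The hypothesis $\mathrm{char}(K) \nmid \lambda_i$ for all $i$ is precisely what simultaneously excludes this non-reducedness at every geometric point and forces all stabilizer orders to be prime to $\mathrm{char}(K)$.
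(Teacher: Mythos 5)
Your proof is correct and follows essentially the same route as the paper: identify the geometric stabilizers as finite subgroup schemes $\mu_u$ of $\Gb_m$ with $u$ dividing some $\lambda_i$, then deduce tameness from the orders being prime to $\mathrm{char}(K)$ and the Deligne--Mumford property from these group schemes being \'etale (equivalently, the diagonal being unramified). Your explicit scheme-theoretic identification of the stabilizer as $\mu_{\gcd\{\lambda_i : x_i \neq 0\}}$ is a slight sharpening of the paper's statement but does not change the argument.
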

    
    \begin{proof}
        For any algebraically closed field extension $\overline{K}$ of $K$, any point $y \in \Pcv(\overline{K})$ is represented by the coordinates $(y_0,\dotsc,y_N) \in \Ab_{\overline{K}}^{N+1}$ with its stabilizer group as the subgroup of $\Gb_m$ fixing $(y_0,\dotsc,y_N)$. Hence, any stabilizer group of such $\overline{K}$-points is $\mu_u$ where $u$ divides $\lambda_i$ for some $i$. Since the characteristic of $K$ does not divide the orders of $\mu_u$ for any $i$, the stabilizer group of $y$ is $\overline{K}$-linearly reductive. Hence, $\Pcv$ is tame by \cite[Theorem 3.2]{AOV}. Note that the stabilizer groups constitute fibers of the diagonal $\Delta: \Pcv \rightarrow \Pcv \times_K \Pcv$. Since $\Pcv$ is of finite type and the groups $\mu_u$ are unramified over $K$ whenever $u$ is not divisible by $\mathrm{char}(K)$, the map $\Delta$ is unramified as well. Therefore, $\Pcv$ is also Deligne--Mumford by \cite[Theorem 8.3.3]{Olsson2}.
    \end{proof}
    
    \par The tameness is analogous to flatness for stacks in positive/mixed characteristic as it is preserved under base change by \cite[Corollary 3.4]{AOV}. Moreover, if a stack $\mathcal X$ is tame and Deligne--Mumford, then the formation of the coarse moduli space $c: \mathcal X \rightarrow X$ commutes with base change as well by \cite[Corollary 3.3]{AOV}.

    \begin{exmp}\label{exmp:many_mod}
        There is a whole array of moduli stacks of curves that are isomorphic to $\Pcv$ for various weights $\vec{\lambda} = (\lambda_0, \dotsc, \lambda_N)$ over a field $K$ with $\text{char}(K)$ does not divide $\lambda_i \in \N$ for every $i$:
        
        \begin{itemize}
        \item When $\text{char}(K)\ne 2,3$, \cite[Proposition 3.6]{Hassett} shows that one example is given by the proper Deligne--Mumford stack of stable elliptic curves $$(\Me)_K \cong [ (\mathrm{Spec}~K[a_4,a_6]-(0,0)) / \Gb_m ] = \Pc_K(4,6)$$ by using the short Weierstrass equation $y^2 = x^3 + a_4x + a_6x$, where $\zeta \cdot a_i=\zeta^i a_i$ for $\zeta \in \Gb_m$ and $i=4,6$. This is no longer true when $\text{char}(K)\in \{2,3\}$, as the Weierstrass equations are more complicated. 

        \item Similarly, one could consider the stack $\Me[\Gamma]$ of generalized elliptic curves with $[\Gamma]$-level structure by the work of Deligne and Rapoport \cite{DR} (summarized in \cite[\S 2]{Conrad} and also in \cite[\S 2]{Niles}) such as $\Me[\Gamma_1(2)] \iso \Pc(2,4)$ \cite[\S 1.3]{Behrens}, $\Me[\Gamma_1(3)] \iso \Pc(1,3)$ \cite[Proposition 4.5]{HMe} and $\Me[\Gamma(2)] \iso \Pc(2,2)$ \cite[Proposition 7.1]{Stojanoska}. 

        \item Also, one could consider the stack $\Mg_{1,m}(m-1)$ of $m$-marked $(m-1)$-stable curves of arithmetic genus one formulated originally by the works of \cite{Smyth, Smyth2} such as $\Mg_{1,2}(1) \iso \Pc(2,3,4)$, $\Mg_{1,3}(2) \iso \Pc(1,2,2,3)$, $\Mg_{1,4}(3) \iso \Pc(1,1,1,2,2)$ and $\Mg_{1,5}(4) \iso \Pb(1,1,1,1,1,1) \iso \Pb^5$ as shown by \cite[Theorem 1.5.7.]{LP}.

        \item For higher genus $g \ge 2$, the moduli stack $\Hc_{2g}[2g-1]$ of quasi-admissible (i.e., monic odd degree hyperelliptic) genus $g$ curves (originally introduced by \cite{Fedorchuk}) with a generalized Weierstrass equation $y^2 = x^{2g+1} + a_{4}x^{2g-1} + a_{6}x^{2g-2} + a_{8}x^{2g-3} + \cdots + a_{4g+2}$ is the proper Deligne--Mumford stack isomorphic to $\Pc(4,6,8,\dotsc,4g+2)$ by \cite[Proposition 4.2(1)]{Fedorchuk} over $\mathrm{char}(K)=0$ and by \cite[Proposition 5.9]{HP2} over $\mathrm{char}(K)> 2g+1$.
        \end{itemize}
    \end{exmp}
    Now for $n \in \N$, consider the stack 
    \begin{equation} \label{eqn:Homnfirstdef}
    \Hom_n(\Pb^1,\Pcv) = \left\{f : \Pb^1 \to \Pcv : f \text{ morphism with }f^* \mathcal{O}_{\Pcv}(1) = \mathcal{O}_{\Pb^1}(n) \right\}
    \end{equation}
    of morphisms from $\Pb^1$ to $\Pcv$ of degree\footnote{~We remark that this notion of degree is different than the one used in papers like \cite{Vistoli}. The reason is that e.g. for $N=1$, the cycle $c_1(\mathcal{O}_\Pcv(1)) \in H_2(\Pcv)$ does not necessarily have degree $1$, but instead degree $1/D$ for $D = \mathrm{lcm}(\lambda_0, \lambda_1)$. Then for $f \in \Hom_n(\Pb^1,\Pcv)$ we have $f_*[\Pb^1] = n D \cdot [\Pcv] \in H_2(\Pcv)$ in the sense of \cite{Vistoli}. However, in this paper we will exclusively use the notion of degree used in \eqref{eqn:Homnfirstdef}.} $n$. Such a morphism $f$ can be specified by a tuple
    \begin{equation} \label{eqn:uspace}
    \vec{u} = (u_0, \ldots, u_N) \in \bigoplus_{i=0}^N H^0\left(\Pb^1, \mathcal{O}_{\Pb^1}(n \lambda_i)\right)
    \end{equation}
    of sections of the bundles $\mathcal{O}_{\Pb^1}(n \lambda_i)$ such that the $u_i$ do not have a common zero on $\Pb^1$. The tuple $\vec{u}$ is unique up to the scaling by elements $\zeta \in \Gb_m$ defined by
    \begin{equation} \label{eqn:uaction}
    \zeta \cdot (u_0, \ldots, u_N) = (\zeta^{\lambda_0} u_0, \ldots, \zeta^{\lambda_N} u_N)\,.
    \end{equation}
    From this description, we see that we can identify $\Hom_n(\Pb^1,\Pcv)$ as an open substack of an ambient larger weighted projective stack. Consider the vector
    \[
    \Lambdavec = (\underbrace{\lambda_0, \ldots, \lambda_0}_{n\lambda_0+1 \text{ times}}, \ldots, \underbrace{\lambda_N, \ldots, \lambda_N}_{n\lambda_N+1 \text{ times}} )\in \mathbb{Z}^{\sum_i (n\lambda_i+1)} \,.
    \]
    As the dimension of the space of sections of $\mathcal{O}_{\Pb^1}(n\lambda_i)$ is $n\lambda_i+1$, it is easy to see that the space \eqref{eqn:uspace} of tuples $\vec{u}$ with its $\Gb_m$-action \eqref{eqn:uaction} is naturally identified with $U_\Lambdavec$. Thus we can see $\Hom_n(\Pb^1,\Pcv)$ as the open substack
    \begin{equation} \label{eqn:HomsubsetPLambda}
    \Hom_n(\Pb^1,\Pcv) = \left\{[\vec u] : \text{the } u_i \text{ have no common zero on }\Pb^1 \right\} \subseteq \Pov\,.
    \end{equation}
    From this description, it is clear that $\Hom_n(\Pb^1,\Pcv)$ itself has the structure of a smooth Deligne--Mumford stack.

    \medskip

    \subsection{Stacks of morphisms from unparameterized rational curves}\label{sec:PGL_Moduli} 

    \par To study the moduli stack $\M_{n}^{\lambdavec}$ of morphisms $\Pb^1 \to \Pcv$ from an \emph{unparameterized} $\Pb^1$, we want to divide the Hom stack $\Hom_n(\Pb^1,\Pcv)$ by the automorphism group $\PGL_2$ acting on the domain of the morphism. On the level of sets, this action is easy to describe: an automorphism $\varphi \in \PGL_2$ of $\Pb^1$ acts on $f \in \Hom_n(\Pb^1,\Pcv)$ by sending it to $f \circ \varphi^{-1}$.\footnote{~As usual, the inverse is necessary to obtain a left action.} 

    Note that since $\Hom_n(\Pb^1,\Pcv)$ itself already has the structure of a Deligne--Mumford stack, we formally need to use the notion of algebraic group actions on stacks developed in \cite{Romagny} to lift this action from the level of sets to the algebraic category. It is straightforward to see that all the necessary compatibility relations from \cite{Romagny} are satisfied (see \cite[Lemma C.6]{Schmitt} for a related check). 

    We can also give an explicit description of the above action using coordinates. For this, recall that the action of $\PGL_2$ on $\Pb^1$ is given by
    \begin{equation} \label{eqn:matrixactioncoordinates}
    \begin{pmatrix}
    a & b \\ c & d
    \end{pmatrix} \cdot [X, Y] = [aX + bY, cX + dY]\,.
    \end{equation}
    Given $[\vec u] \in \Hom_n(\Pb^1,\Pcv)$, the entry $u_i \in H^0(\Pb^1,\mathcal{O}_{\Pb^1}(n \lambda_i))$ can be identified with a homogeneous polynomial
    \begin{equation} \label{eqn:coordinaterepr}
    u_i = a_{i,0} X^{n \lambda_i} + a_{i,1} X^{n \lambda_i-1} Y + \ldots + a_{i,n \lambda_i} Y^{n \lambda_i}
    \end{equation}
    in the coordinates $X,Y$. 
    Then from the action \eqref{eqn:matrixactioncoordinates} we can explicitly see the action of elements of $\PGL_2$ on the coordinates $a_{i,j}$ of $\Hom_n(\Pb^1,\Pcv)$. In fact, the coordinates $a_{i,j}$ can be seen as the homogeneous coordinates on the large weighted projective stack $\Pov$ containing $\Hom_n(\Pb^1,\Pcv)$, and the action of $\PGL_2$ naturally extends to $\Pov$.
    We define
    \begin{equation} \label{eqn:Mlambda}
        \M_{n}^{\lambdavec} \coloneqq \left[\Hom_n(\Pb^1,\Pcv)~/~\PGL_2\right]\,.
    \end{equation}
    Then we have the following theorem.
    \begin{thm}\label{Thm:QuotisDM}
    Let $\mathcal{V} \subseteq \Pov$ be an open substack on which $\PGL_2$ acts with finite, reduced stabilizers at geometric points. Then the quotient stack $[\mathcal{V}/\PGL_2]$ is a smooth Deligne--Mumford stack of finite type with affine diagonal over $K$.
    If $\mathrm{char}(K)$ divides neither one of the weights $\lambda_i$ nor any of the orders of the stabilizer groups of the $\PGL_2$ action, then the quotient stack is also tame.
    In particular, for $\mathrm{char}(K) = 0$ or greater than \eqref{eqn:maxdegree}, 
    this condition is satisfied for $\mathcal{V} = \Hom_n(\Pb^1,\Pcv)$.
    \end{thm}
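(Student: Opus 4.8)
The plan is to separate the assertions into three layers: the \emph{geometric} properties (smooth, finite type, affine diagonal), which hold for an arbitrary open $\mathcal V$; the \emph{Deligne--Mumford and tameness} properties, which I would reduce to a computation of the geometric automorphism groups of the quotient; and finally the verification of the stabilizer hypothesis for the distinguished case $\mathcal V = \Hom_n(\Pb^1,\Pcv)$.

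For the first layer, recall that $\Pov$ is smooth and of finite type, so its open substack $\mathcal V$ is as well; composing an \'etale atlas $U \to \mathcal V$ by a smooth scheme with the smooth $\PGL_2$-torsor $\mathcal V \to [\mathcal V/\PGL_2]$ produces a smooth atlas, giving smoothness and finite type of the quotient. For the affine diagonal I would factor the structure map $[\mathcal V/\PGL_2] \to B\PGL_2$: the diagonal $\Delta_{[\mathcal V/\PGL_2]}$ decomposes as the relative diagonal over $B\PGL_2$ followed by the base change of $\Delta_{B\PGL_2}$. The latter is affine because $\PGL_2$ is affine (so $B\PGL_2$ has affine diagonal), while the former becomes $\Delta_{\mathcal V}$ after the smooth surjective base change $\mathrm{pt} \to B\PGL_2$, and $\Delta_{\mathcal V}$ is affine since $\Pov = [(U_{\Lambdavec}\setminus 0)/\Gb_m]$ has affine diagonal and affineness is stable under the open immersion $\mathcal V \hookrightarrow \Pov$. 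A composite of affine morphisms is affine.

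For the second layer, having established that the quotient is of finite type with (in particular separated) diagonal, by \cite[Theorem 8.3.3]{Olsson2} it is Deligne--Mumford precisely when the automorphism group scheme of each geometric point is finite and reduced. For $x \in \mathcal V(\overline K)$ these sit in a short exact sequence $1 \to \underline{\Aut}_{\mathcal V}(x) \to \underline{\Aut}_{[\mathcal V/\PGL_2]}(x) \to \Stab_{\PGL_2}(x) \to 1$. Under the standing hypothesis $\mathrm{char}(K) \nmid \lambda_i$, Proposition \ref{prop:wtprojtame} makes $\Pov$, hence $\mathcal V$, a tame Deligne--Mumford stack, so $\underline{\Aut}_{\mathcal V}(x) \cong \mu_u$ with $u$ dividing some weight and prime to the characteristic; this is finite and \'etale. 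As $\Stab_{\PGL_2}(x)$ is finite and reduced by hypothesis, the extension is finite and \'etale, yielding the Deligne--Mumford property. Adding the tameness hypothesis that $\mathrm{char}(K)$ divides no stabilizer order, the whole group $\underline{\Aut}_{[\mathcal V/\PGL_2]}(x)$ becomes finite \'etale of order prime to $\mathrm{char}(K)$, hence linearly reductive, and tameness follows from \cite[Theorem 3.2]{AOV}.

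The third layer, and the part I expect to be the main obstacle, is to verify the stabilizer hypothesis for $\mathcal V = \Hom_n(\Pb^1,\Pcv)$ when $\mathrm{char}(K) = 0$ or exceeds \eqref{eqn:maxdegree}. Here $\Stab_{\PGL_2}([\vec u])$ consists of those $\varphi \in \PGL_2$ with $\varphi^* \vec u = \zeta \cdot \vec u$ for some $\zeta \in \Gb_m$. Finiteness I would argue by excluding positive-dimensional stabilizers: a one-parameter subgroup fixing $[\vec u]$ would force the $u_i$ to be supported on a single $\Gb_m$- or $\Gb_a$-eigen-configuration of monomials, incompatible with the $u_i$ being base-point-free of the prescribed degrees. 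The delicate point is \emph{reducedness}, since in small characteristic the stabilizer can acquire an infinitesimal ($\mu_p$ or $\alpha_p$) part; this is exactly where the bound enters, as \eqref{eqn:maxdegree} is the largest degree of $f$ onto its image and therefore controls the order of any stabilizing automorphism. I would reduce this to the self-contained analysis of $\PGL_2$-stabilizers of unions of fat points on $\Pb^1$ in Appendix \ref{Sect:stabfatpoints}, by attaching to $f$ the finite subscheme of $\Pb^1$ cut out by the coincidence loci of $\vec u$; the characteristic bound then guarantees that the resulting stabilizer group schemes are reduced and of order prime to the characteristic. I anticipate the exact sequence and finiteness to be routine, with essentially all of the real work concentrated in this reducedness bound and its reduction to the fat-point computation.
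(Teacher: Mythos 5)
Your first two layers are correct and essentially reproduce the paper's argument: algebraicity and smoothness come from the quotient construction of \cite{Romagny} and the torsor $\mathcal V \to [\mathcal V/\PGL_2]$, and the Deligne--Mumford and tameness claims are proved exactly as you say, via the extension $1 \to \underline{\Aut}_{\mathcal V}(x) \to \underline{\Aut}_{[\mathcal V/\PGL_2]}(x) \to (\PGL_2)_x \to 1$ of Lemma \ref{Lem:stabilizerextension}, Proposition \ref{prop:wtprojtame}, and closure of finite unramified group schemes under extension (\cite[Proposition 40]{Stix}). Your explicit factorization of the diagonal through $B\PGL_2$ is more detailed than what the paper records but is fine.

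The genuine gap is in your third layer, and it is exactly the step you flagged as the main obstacle. You propose to attach to a base-point-free tuple $\vec u$ a canonical finite subscheme (``coincidence loci'') and feed it to the fat-point analysis of Appendix \ref{Sect:stabfatpoints}. But Proposition \ref{Pro:stabilizerfatpoints} requires the subscheme to have at least \emph{three} distinct support points of multiplicity prime to $\mathrm{char}(K)$, and for a base-point-free $\vec u$ every natural canonically attached divisor can be concentrated at two points. Take $\vec u = (X^{n\lambda_0}, Y^{n\lambda_1}) \in \Hom_n(\Pb^1,\Pc(\lambda_0,\lambda_1))$: there are no base points, the zero loci of the $u_i$ are $\{0\}$ and $\{\infty\}$, and the induced map to the coarse space $\Pb^1$ of $\Pc(\lambda_0,\lambda_1)$ is $[X:Y]\mapsto[X^{nL}:Y^{nL}]$ with $L=\mathrm{lcm}(\lambda_0,\lambda_1)$, whose ramification is supported at two points only. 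Yet this point has finite, reduced, tame stabilizer and must be covered by the theorem. (The strategy you describe is the one the paper uses for the \emph{conjugation} action in Appendix \ref{sec:Appen_Sil}, where $\mathrm{Fix}(f)\cup\mathrm{Crit}(f)$ is available; for the precomposition action there is no such canonical divisor with enough support.)

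What the paper actually does in this case (Proposition \ref{Prop:finredstabilizercrit}) is different: choose $i<j$ with $u_i,u_j$ not identically zero, compose $\varphi_{[\vec u]}$ with the projection to the $(i,j)$-coordinates and the coarse moduli map of $\Pc(\lambda_i,\lambda_j)$ to get $\overline\varphi:\Pb^1\to\Pb^1$ of degree $\mathrm{lcm}(\lambda_i,\lambda_j)\cdot n$. The bound \eqref{eqn:maxdegree} makes this degree prime to $\mathrm{char}(K)$, so $\overline\varphi$ is generically \'etale; the stabilizer preserves each fibre and hence injects into $\bigoplus_{j=1}^3 \Sym(\overline\varphi^{-1}(P_j))$ for three general points $P_j$ with reduced fibres, the kernel being trivial since it would fix three distinct points of $\Pb^1$. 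This constant group scheme is \'etale and of order prime to the characteristic, giving reducedness and tameness at once. The fat-point appendix is only invoked in the complementary case of at least three base points, which never occurs on $\Hom_n(\Pb^1,\Pcv)$. So your reduction as stated does not go through, and this three-general-fibres argument (or an equivalent substitute) is the missing content.
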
    
    Our main goal in this section is to prove Theorem \ref{Thm:QuotisDM}. For this, we need to analyze the stabilizer groups of the $\PGL_2$-action on $\Pov$. We begin by observing the following technical result.
    \begin{lem} \label{Lem:stabilizerextension}
    Let $G$ be a flat, separated group scheme of finite presentation and let $\mathcal{M}$ be an algebraic stack with a $G$-action in the sense of \cite{Romagny}. Then for a geometric point $p : \mathrm{Spec}(k) \to \mathcal{M}$, the automorphism group $(\mathcal{M}/G)_{[p]}$ fits into an exact sequence
    \begin{equation}
        0 \to \mathcal{M}_{p} \to (\mathcal{M}/G)_{[p]} \to G_p \to 0
    \end{equation}
    of group schemes. Here $\mathcal{M}_{p}$ is the automorphism group of $p$ in $\mathcal{M}$ and $G_p \subseteq G$ is the \emph{stabilizer group of $p$ in $G$}, i.e., the closed subgroup of $g \in G$ such that $g p \cong p \in \mathcal{M}$.
    \end{lem}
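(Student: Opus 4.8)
The plan is to compute the automorphism group $(\mathcal{M}/G)_{[p]}$ directly from the description of the quotient stack $[\mathcal{M}/G]$ in the sense of \cite{Romagny}, whose $T$-points are pairs $(P\to T,\, f\colon P\to\mathcal{M})$ consisting of a $G$-torsor $P$ over $T$ together with a $G$-equivariant morphism $f$, and whose morphisms are isomorphisms of torsors equipped with a compatible $2$-isomorphism of the equivariant maps. The image $[p]$ of $p\colon\Spec(k)\to\mathcal{M}$ corresponds to the trivial torsor $G_k\to\Spec(k)$ endowed with the equivariant map $\phi\colon G_k\to\mathcal{M}$, $g\mapsto g\cdot p$, obtained by restricting the action morphism $a\colon G\times\mathcal{M}\to\mathcal{M}$. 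First I would record the structural fact that the quotient map $\pi\colon\mathcal{M}\to[\mathcal{M}/G]$ is itself a $G$-torsor, hence representable by algebraic spaces (its fibre over any $T$-point is the corresponding torsor $P$); in particular $\pi$ is faithful on fibre categories, so the induced homomorphism $\mathcal{M}_p\to(\mathcal{M}/G)_{[p]}$ is a monomorphism. This yields the left-hand injectivity in the sequence.

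Next I would unwind what an automorphism of $[p]$ over a $k$-scheme $T$ is. An automorphism of the trivial torsor $G_T$ is left translation $L_g$ by a unique $g\in G(T)$, and the remaining datum is a $2$-isomorphism between the equivariant maps $\phi_T$ and $\phi_T\circ L_g^{-1}$. Because a $G$-equivariant morphism out of the trivial torsor is determined by, and may be freely prescribed through, its value on the identity section, such a $2$-isomorphism is the same thing as a single isomorphism $\eta\colon g\cdot p_T\cong p_T$ in $\mathcal{M}(T)$, where $p_T$ denotes the pullback of $p$ to $T$. In other words, $(\mathcal{M}/G)_{[p]}$ is canonically identified with the $2$-fibre product $G\times_{a(\cdot,\,p),\,\mathcal{M},\,p}\Spec(k)$, i.e.\ the group of pairs $(g,\eta)$ under the evident composition law (multiplying the translations and composing the induced isomorphisms via the action). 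I would verify that this identification is a homomorphism of group schemes.

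Finally, the projection $(g,\eta)\mapsto g$ defines a homomorphism $(\mathcal{M}/G)_{[p]}\to G$ whose scheme-theoretic image is exactly the stabilizer $G_p$, and I would check it is an fppf epimorphism onto $G_p$: by the very definition of $G_p$, for $g\in G_p(T)$ an isomorphism $\eta\colon g\cdot p_T\cong p_T$ exists fppf-locally on $T$, providing local lifts. Its kernel consists of the pairs $(e,\eta)$ with $\eta\colon p_T\cong p_T$, which is precisely $\mathcal{M}_p(T)$, and this matches the image of the monomorphism $\mathcal{M}_p\hookrightarrow(\mathcal{M}/G)_{[p]}$ from the first paragraph. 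Reading all three terms as the relevant fppf sheaves of groups — representable as group schemes or algebraic spaces since $G$ and $\mathcal{M}$ are — then gives the asserted short exact sequence. I expect the main obstacle to be the $2$-categorical bookkeeping of the middle paragraph: namely verifying, using the coherence data of the \cite{Romagny}-action, that a $2$-isomorphism of equivariant maps from the trivial torsor is equivalent to its restriction to the identity section, and that under this equivalence the composition of automorphisms matches the stated group law. Everything else is formal once representability of $\pi$ and this translation between equivariant $2$-isomorphisms and stabilizer data are in place.
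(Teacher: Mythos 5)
Your argument is correct, but the paper does not actually carry out any of it: its entire proof is the citation ``This follows from [Romagny, Theorem 4.1] as explained in [Romagny, Remark 4.2].'' What you have written is, in effect, a self-contained reconstruction of the content of that remark. The key structural inputs match: you use that $\pi\colon \mathcal{M}\to[\mathcal{M}/G]$ is a $G$-torsor (this is exactly what Romagny's Theorem 4.1 provides, and it is what makes $\mathcal{M}_p\to(\mathcal{M}/G)_{[p]}$ injective), and you identify $(\mathcal{M}/G)_{[p]}$ with the $2$-fibre product $G\times_{\mathcal{M}}\Spec(k)$ of pairs $(g,\eta\colon g\cdot p\cong p)$, from which the kernel $\mathcal{M}_p$ and image $G_p$ of the projection to $G$ read off immediately. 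The one point you rightly flag as the real work --- that a $2$-isomorphism of $G$-equivariant maps out of the trivial torsor is freely determined by its value on the identity section, compatibly with composition, using the coherence $2$-cells of the Romagny action --- is precisely the verification that the citation sweeps under the rug, and your sketch of it is the standard and correct one. Note also that since $p$ is a geometric point, $k$ is algebraically closed, so the distinction between ``an isomorphism $g\cdot p_T\cong p_T$ exists globally'' and ``exists fppf-locally'' in your surjectivity step is harmless here. In short: same mathematics, but you supply the argument the paper delegates to the reference, which is a perfectly acceptable (arguably more transparent) alternative.
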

    \begin{proof}
    This follows from \cite[Theorem 4.1]{Romagny} as explained in \cite[Remark 4.2]{Romagny}.
    \end{proof}
    
    For a geometric point $[\vec u] \in \Pov$, a \emph{base point} of $[\vec u]$ is a point $q = [X_0:Y_0] \in \Pb^1$ such that $u_i(q)=0$ for $i=0, \ldots, N$. 
    For $q_1, \ldots, q_\ell \in \Pb^1$ the finitely many base points of $[\vec u]$, denote by
    \[
    \varphi_{[\vec u]} : \Pb^1 \setminus \{q_1, \ldots, q_\ell\} \to \Pcv
    \]
    the underlying rational map to $\Pcv$.
    

    \begin{lem} \label{Lem:constmap}
     Let $[\vec u] \in \Pov$ be a geometric point. Then the underlying rational map $\varphi_{[\vec u]}$ is \emph{not} finite if and only if there exists $[\vec a] \in \Pcv$ and
     \[
     U \in H^0(\Pb^1, \mathcal{O}_{\Pb^1}(\ell \cdot n))\text{ with }\ell = \gcd(\lambda_i : i=0,\ldots, N \text{ s.t. }a_i \neq 0)\,,
     \]
     such that
     \[
     u_i = a_i \cdot U^{\lambda_i / \ell}\,.
     \]
    \end{lem}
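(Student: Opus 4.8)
The plan is to show that $\varphi_{[\vec u]}$ fails to be finite precisely when its image is a single point, and then to characterize this ``constant'' case by the explicit form of $\vec u$. Since $[\vec u]$ is a geometric point, I would work over an algebraically closed field $k$, so that $k[X,Y]$ is a unique factorization domain. The image $Z \subseteq \Pcv$ of the rational map $\varphi_{[\vec u]}$ is an irreducible closed substack of dimension $0$ or $1$, and the first step is to argue that $\varphi_{[\vec u]}$ is finite exactly when $\dim Z = 1$. Indeed, a non-constant rational map from the smooth proper curve $\Pb^1$ to the proper Deligne--Mumford stack $\Pcv$ extends (by the valuative criterion of properness applied over the base points) to a morphism $\Pb^1 \to \Pcv$ with $1$-dimensional image; this extension is quasi-finite, since every fibre is a proper closed subscheme of $\Pb^1$ and hence finite unless the map is constant, and it is proper, so it is finite onto $Z$. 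Conversely, if $Z$ is a point then $\varphi_{[\vec u]}$ is constant and manifestly not finite. Thus the task reduces to showing that $\varphi_{[\vec u]}$ is constant with image $[\vec a]$ if and only if $u_i = a_i U^{\lambda_i/\ell}$ with $\ell = \gcd(\lambda_i : a_i \neq 0)$ and $U \in H^0(\Pb^1,\Oc_{\Pb^1}(\ell n))$.

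For the ``if'' direction, suppose $u_i = a_i U^{\lambda_i/\ell}$. At a geometric point $p \in \Pb^1$ with $U(p) \neq 0$, I would pick $\zeta \in k^\times$ with $\zeta^\ell = U(p)$; then $\zeta^{\lambda_i} = U(p)^{\lambda_i/\ell}$ for every $i$, whence $u_i(p) = \zeta^{\lambda_i} a_i$, i.e. $\vec u(p) = \zeta \cdot \vec a$ lies in the $\Gb_m$-orbit of $\vec a$ and $[\vec u(p)] = [\vec a]$. As this holds on a dense open subset, the image is the single point $[\vec a]$ and $\varphi_{[\vec u]}$ is not finite.

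For the converse, assume $\varphi_{[\vec u]}$ is constant with image $[\vec a]$. For indices with $a_i = 0$, constancy gives $u_i(p) = \zeta(p)^{\lambda_i} a_i = 0$ on a dense open, so $u_i \equiv 0$, consistent with the claimed form. For the remaining indices, constancy of $[\vec u(p)] = [\vec a]$ yields for every pair $i,j$ with $a_i, a_j \neq 0$ the pointwise identity $u_i(p)^{\lambda_j} a_j^{\lambda_i} = u_j(p)^{\lambda_i} a_i^{\lambda_j}$; since both sides are homogeneous of degree $n\lambda_i\lambda_j$, this is an identity of polynomials, so $u_i^{\lambda_j}$ and $u_j^{\lambda_i}$ are proportional. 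Writing each $u_i = a_i \prod_k P_k^{e_{i,k}}$ in terms of distinct irreducible homogeneous factors $P_k \in k[X,Y]$, proportionality forces $\lambda_j e_{i,k} = \lambda_i e_{j,k}$, so that $e_{i,k} = \lambda_i \rho_k$ for rational numbers $\rho_k$ independent of $i$. I would then set $U = \prod_k P_k^{\ell \rho_k}$ and verify that $\ell\rho_k = \sum_i c_i e_{i,k} \in \Zb$ using a Bézout relation $\ell = \sum_i c_i \lambda_i$ for $\ell = \gcd(\lambda_i : a_i \neq 0)$. Then $U$ is a genuine homogeneous polynomial with $u_i = a_i U^{\lambda_i/\ell}$, and comparing degrees gives $\deg U = \ell n$, i.e. $U \in H^0(\Pb^1,\Oc_{\Pb^1}(\ell n))$.

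The main obstacle is the reduction in the first step: one must make precise that ``non-finite'' is equivalent to ``constant'' for the rational map $\varphi_{[\vec u]}$, carefully handling the base points via the valuative criterion for the proper Deligne--Mumford stack $\Pcv$ and checking that a non-constant extension is genuinely finite onto its image. The second delicate point is the integrality claim $\ell\rho_k \in \Zb$, which is exactly where the precise hypothesis $\ell = \gcd(\lambda_i : a_i \neq 0)$ (rather than the gcd of all weights) is needed; here the unique factorization of $k[X,Y]$ is what converts the pairwise proportionalities into the existence of a single common ``radical'' section $U$.
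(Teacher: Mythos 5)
Your core argument is correct, but it takes a genuinely different route from the paper's in the crucial ``only if'' direction. The paper works locally: for each point $q\in\Pb^1$ it studies the vector of vanishing orders $(\mathrm{ord}_q(u_i))_{i\in I}$, resolves the potential indeterminacy by precomposing with $t\mapsto t^L$ for $L=\mathrm{lcm}(\lambda_i)$, and deduces from constancy of the limit that this vector is an integer multiple $m_q\cdot(\lambda_i/\ell)_{i\in I}$; the section $U$ is then assembled from the divisor $\sum_q m_q\,q$. You instead work globally: constancy gives the polynomial identities $u_i^{\lambda_j}a_j^{\lambda_i}=u_j^{\lambda_i}a_i^{\lambda_j}$, and unique factorization in $k[X,Y]$ turns these into $e_{i,k}=\lambda_i\rho_k$ for the exponents of the irreducible factors, after which B\'ezout for $\ell=\gcd(\lambda_i:a_i\neq 0)$ gives $\ell\rho_k\in\Zb$ and hence $U$. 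This is a clean and arguably more elementary derivation (no reparametrization trick), and it uses the gcd hypothesis in exactly the same way the paper does (the paper's integrality of $m_q$ is the same B\'ezout argument in disguise). One small point to tidy: you cannot simply ``write $u_i=a_i\prod_k P_k^{e_{i,k}}$''; the factorization produces constants $c_i$ a priori unrelated to $a_i$, and you should note that $[\vec c]=[\vec a]$ in $\Pcv$ and absorb the discrepancy into a rescaling of $U$ by an $\ell$-th power.

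The one step that does not survive scrutiny as written is your preliminary reduction: a non-constant rational map from $\Pb^1$ to the proper Deligne--Mumford stack $\Pcv$ does \emph{not} in general extend to a morphism $\Pb^1\to\Pcv$ over the base points. The valuative criterion for Deligne--Mumford stacks only guarantees an extension after a finite (possibly ramified) extension of the DVR, which is precisely why root-stack constructions appear in this context; for instance $[X:XY]$ into $\Pc(1,2)$ extends over $X=0$ only after a degree-$2$ cover. The paper sidesteps this entirely by treating ``$\varphi_{[\vec u]}$ not finite'' as synonymous with ``$\varphi_{[\vec u]}$ contracts $\Pb^1$ to a point'' (see its footnote on this), which is all that is used downstream. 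So you should either adopt that convention as well, or replace the extension argument by one that does not require extending the map across the base points (e.g.\ arguing on the dense open complement, or via the induced map to the coarse space $\Pb(\lambdavec)$).
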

    \begin{proof}
    Since we consider a geometric point of $\Pov$, we can assume for the entire proof that we work over an algebraically closed base field $\bar K$.
    If the $u_i$ are of the form $u_i = a_i \cdot U^{\lambda_i / \ell}$ as above, the rational map $\varphi_{[\vec u]}$ contracts $\Pb^1$ to the point $[\vec a] \in \Pcv$, so it is indeed not finite.
    Conversely, assume that $\varphi_{[\vec u]}$ is \emph{not} finite, so it contracts $\Pb^1$ to some point $[\vec b] \in \Pcv$. Let $I \subseteq \{0, \ldots, N\}$ be the subset of indices with $b_i \neq 0$. For $j \notin I$ we must have $u_j = 0$, since otherwise there is a point of $\Pb^1$ where $u_j$ does not vanish and which thus maps to a point different from $[\vec b]$. 
    
    Let $q \in \Pb^1$ be any geometric point and consider the vector $v_q = (\mathrm{ord}_q(u_i) : i \in I)$. We claim that this vector is an integer multiple
    \[
    v_q = m_q \cdot (\lambda_i/\ell : i \in I)
    \]
    of the vector of numbers $\lambda_i/\ell$. This is clear for $v_q=0$, so assume otherwise. Then a priori the rational map $\Pb^1 \to \Pcv$ could have point of indeterminacy at $q$. To resolve it, let $L = \mathrm{lcm}(\lambda_i : i \in I)$ and consider the precomposition $\widetilde \varphi$ of $\varphi_{[\vec u]}$ with the coordinate change $t \mapsto t^L$, for a local coordinate $t$ around the point $q$. Then the $i$-th homogeneous coordinate of $\widetilde \varphi$ is given by a nonzero multiple of $t^{L \cdot v_{q,i}}$. Let $M \geq 0$ be the largest integer such that all $t^{L \cdot v_{q,i}}$ are divisible by $t^{M \cdot \lambda_i}$. Then by the definition of $L$, there exists $i_0 \in I$ with $M \cdot \lambda_{i_0} = L \cdot v_{q,i_0}$. But if there was some $i' \in I$ such that $M \cdot \lambda_{i'} <  L \cdot v_{q,i'}$, then the limit $\widetilde \varphi(t)$ as $t \to 0$ would have vanishing coordinate $i'$. This would imply that this limit is different from $[\vec b]$, giving a contradiction to the map $\varphi_{[\vec u]}$ being constant. Thus indeed we have $M \cdot \lambda_{i} = L \cdot v_{q,i}$ for all $i \in I$, so that
    \[
    v_q = \frac{M}{L} \cdot (\lambda_i : i \in I)
    \]
    This shows that the vector $v_q$ is a \emph{rational} multiple of the integer vector $(\lambda_i/\ell : i \in I)$, and since the $\gcd$ of the entries of this integer vector is $1$ by construction, it follows that all factors $m_q$ are in fact integers.
    
    To finish the proof, first observe that the numbers $m_q$ we found above must satisfy that for each $i \in I$ we have
    \[
    \sum_{q \in \Pb^1} m_q \cdot \frac{\lambda_i}{\ell} = \lambda_i \cdot n \quad \iff \quad  \sum_{q \in \Pb^1} m_q = \ell \cdot n\,,
    \]
    since the vanishing divisor of the section $u_i$ must have total degree $\lambda_i \cdot n$. Now let $U \in H^0(\Pb^1, \mathcal{O}_{\Pb^1}(\ell \cdot n))$ be a section with multiplicity $m_q$ at $q$ for all $q \in \Pb^1$, which is unique up to scaling. Then the sections $u_i$ and $U^{\lambda_i/\ell}$ of $\mathcal{O}_{\Pb^1}(\lambda_i \cdot n)$ have the same divisors of zeros and thus differ by some scalar factor 
    \[u_i = a_i \cdot U^{\lambda_i/\ell}\,.\] 
    This is precisely the statement claimed in the lemma, so the proof is finished.
    \end{proof}
    
    \begin{prop} \label{Prop:finredstabilizercrit}
    Let $[\vec u] \in \Pov$ be a geometric point such that the underlying rational map $\varphi_{[\vec u]}$ is finite\footnote{~The map being finite essentially means that it is not constant, though due to stack issues we have to be careful about about maps whose image is a single point but which do not factor through $\Spec(K)$.}. Then if $\mathrm{char}(K)$ does not divide any of the $\lambda_i$ or the number $n$, the stabilizer of $[\vec u]$ in $\PGL_2$ is finite and reduced.
    
    Moreover, if either $\varphi_{[\vec u]}$ is finite or $[\vec u]$ has at least three distinct base points and $\mathrm{char}(K)=0$ or greater than
    \begin{equation} \label{eqn:maxdegree}
    \max_{0 \leq i<j\leq N}\mathrm{lcm}(\lambda_i, \lambda_j) \cdot n\,,
    \end{equation}
    then the stabilizer of $[\vec u]$ in $\PGL_2$ is finite and reduced and tame.
    \end{prop}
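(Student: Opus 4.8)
The plan is to handle the two alternatives of the hypothesis separately, but to reduce both to a single mechanism for tameness: over the algebraically closed residue field $\overline{K}$, a finite reduced group scheme is linearly reductive (tame) exactly when its order is prime to $p := \mathrm{char}(K)$, so it suffices to show that the stabilizer $G := \Stab_{\PGL_2}([\vec u])$ contains no element of order $p$. Before splitting into cases I would record that $\mathrm{char}(K) = 0$ or greater than \eqref{eqn:maxdegree} forces $\mathrm{char}(K)$ to divide neither any $\lambda_i$ nor $n$, since \eqref{eqn:maxdegree}$\,\ge \lambda_i n \ge \lambda_i, n$. Consequently, in the case where $\varphi_{[\vec u]}$ is finite, the finiteness and reducedness of $G$ are already furnished by the first part of the Proposition, so only tameness remains; in the case of at least three base points I would establish all three properties directly.

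For the finite case, suppose for contradiction that $\sigma \in G$ has order $p$. Then $\langle\sigma\rangle \cong \Z/p$ acts faithfully on $\Pb^1$, so $K(\Pb^1)/K(\Pb^1)^{\langle\sigma\rangle}$ is Galois of degree $p$. Writing $f = \varphi_{[\vec u]}$ with image curve $C$, the relation $f \circ \sigma^{-1} = f$ (which is exactly what $\sigma \in G$ means) gives $f^* K(C) \subseteq K(\Pb^1)^{\langle\sigma\rangle}$, hence $\deg f = [K(\Pb^1) : f^* K(C)] \ge p$. I would contradict this with the degree bound \eqref{eqn:maxdegree}: for a pair $i < j$ set $g = \gcd(\lambda_i,\lambda_j)$ and consider $w_{ij} = u_i^{\lambda_j/g}/u_j^{\lambda_i/g}$, a rational function on $\Pcv$ whose pullback $f^* w_{ij}$ has degree at most $n\,\mathrm{lcm}(\lambda_i,\lambda_j)$ as a map $\Pb^1 \to \Pb^1$. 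Since $f$ is finite, Lemma \ref{Lem:constmap} shows $[\vec u]$ is not of the contracting form $u_i = a_i U^{\lambda_i/\ell}$ — which is precisely the form making every $w_{ij}$ constant — so some $f^* w_{ij}$ is nonconstant, and for such a pair
\[
\deg f \le \deg(f^* w_{ij}) \le n\,\mathrm{lcm}(\lambda_i,\lambda_j) \le \max_{0 \le i<j \le N} n\,\mathrm{lcm}(\lambda_i,\lambda_j) < p\,,
\]
the desired contradiction. Hence $p \nmid |G|$ and $G$ is tame.

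For the case of at least three distinct base points with $\varphi_{[\vec u]}$ not finite, Lemma \ref{Lem:constmap} gives $u_i = a_i U^{\lambda_i/\ell}$ with $\ell = \gcd(\lambda_i : a_i \ne 0)$ and $U \in H^0(\Pb^1,\mathcal{O}_{\Pb^1}(\ell n))$, the base points being exactly the (at least three) distinct zeros of $U$. A direct computation — using $\gcd(\lambda_i/\ell : a_i \ne 0) = 1$ — shows that any $\varphi$ with $\varphi \cdot \vec u = \zeta \cdot \vec u$ satisfies $\varphi \cdot U = \zeta^{\ell} U$, so $G$ is a closed subgroup scheme of the stabilizer $\Stab_{\PGL_2}([U])$ of the effective divisor $D = \mathrm{div}(U)$ of degree $\ell n$, whose support $\Sigma$ has at least three points. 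Any element of $\Stab_{\PGL_2}([U])$ preserves $\Sigma$, and the scheme-theoretic pointwise stabilizer of three distinct points of $\Pb^1$ in $\PGL_2$ is trivial; therefore $\Stab_{\PGL_2}([U])$ embeds as a closed subgroup scheme of the finite constant group of permutations of $\Sigma$, hence is finite and reduced, and so is $G$. For tameness I would again exclude an element $\sigma \in G$ of order $p$: such $\sigma$ is unipotent in $\PGL_2$, so it has a unique fixed point on $\Pb^1$ and all remaining orbits of size $p$; as $\Sigma$ is $\sigma$-invariant with $\ge 3$ points it must contain a full free orbit, whence $\deg D \ge |\Sigma| \ge p$, contradicting $\deg D = \ell n \le \max_{0 \le i<j \le N} n\,\mathrm{lcm}(\lambda_i,\lambda_j) < p$.

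The conceptual heart is the uniform observation that an order-$p$ automorphism of $\Pb^1$ forces either a degree-$p$ subcover (finite case) or a size-$p$ orbit inside a small-degree invariant divisor (base-point case), both incompatible with the characteristic bound. The step I expect to require the most care is the degree estimate $\deg f \le \max_{i<j} n\,\mathrm{lcm}(\lambda_i,\lambda_j)$ — that is, endowing \eqref{eqn:maxdegree} with its stated geometric meaning — where one must verify that at least one $w_{ij}$ restricts nonconstantly to the image, which is exactly where Lemma \ref{Lem:constmap} enters. A secondary subtlety is promoting the identities computed over $\overline{K}$ to statements about stabilizer \emph{group schemes}, for which I would work throughout within the formalism of \cite{Romagny} and Lemma \ref{Lem:stabilizerextension}, using that finiteness and reducedness of $G$ are inherited either from the closed overgroup $\Stab_{\PGL_2}([U])$ or from the first part of the Proposition.
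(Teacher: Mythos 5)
Your tameness mechanism (rule out elements of order $p$ in a finite reduced group scheme) is sound as far as it goes, and your degree estimates via the functions $w_{ij}$ and via the invariant divisor $\mathrm{div}(U)$ correctly capture the geometric meaning of the bound \eqref{eqn:maxdegree}. But the proposal has a genuine gap: it never establishes that the stabilizer is \emph{finite and reduced} in the case where $\varphi_{[\vec u]}$ is finite. You defer this to ``the first part of the Proposition,'' yet that first part is itself part of the statement to be proved, and your order-$p$ argument cannot substitute for it: excluding elements of order $p$ in $G(\overline K)$ says nothing about infinitesimal subgroup schemes such as $\alpha_p$ or $\mu_p$, which is exactly what non-reducedness looks like here. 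The paper closes this by composing $\varphi_{[\vec u]}$ with the projection to two coordinates and the coarse map of $\mathcal{P}(\lambda_i,\lambda_j)$ to get a map $\overline\varphi:\Pb^1\to\Pb^1$ of degree $\mathrm{lcm}(\lambda_i,\lambda_j)\,n$ prime to the characteristic, hence generically \'etale, and then embeds the stabilizer scheme-theoretically into $\bigoplus_{j=1}^3\mathrm{Sym}(\overline\varphi^{-1}(P_j))$ for three general points $P_j$ with reduced fibres. Some argument of this kind is indispensable and is missing from your write-up.

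The same issue undermines your base-point case. The inference ``any element of $\Stab_{\PGL_2}([U])$ preserves $\Sigma$, the pointwise stabilizer of three points is trivial, therefore $\Stab_{\PGL_2}([U])$ embeds as a closed subgroup scheme of the permutation group of $\Sigma$, hence is finite and reduced'' is false for group \emph{schemes}: a non-reduced stabilizer need not act on the reduced support at all. The paper's own counterexample is $Z=V(X\cdot(X-Y)^p\cdot Y)$, which has three distinct points but stabilizer $\mu_p\rtimes\underline{\Sym}(2)$; here $\mu_p$ preserves $(X-Y)^p=X^p-Y^p$ without preserving $X-Y$. Only the reduced subgroup scheme embeds into $\underline{\Sym}(m)$ (Lemma \ref{Lem:reducedinclusion}). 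Reducedness genuinely depends on the multiplicities of the fat points being prime to $\mathrm{char}(K)$ and is proved by the tangent-space/Frobenius-kernel analysis of Proposition \ref{Pro:stabilizerfatpoints}; under your hypotheses the multiplicities are indeed at most $\ell n\leq$ \eqref{eqn:maxdegree}, so you could repair this step simply by citing that Proposition, but as written the reducedness claim does not follow from what you argue.
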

    \begin{proof}
    Consider the stabilizer group $(\PGL_2)_{[\vec u]}$ of the point $[\vec u]$. Then both the set $\{q_1, \ldots, q_\ell\} \subset \Pb^1$ of base points and the map $\varphi_{[\vec u]}$ defined on their complement must be invariant under this stabilizer. First assume that the map $\varphi_{[\vec u]}$ is a finite map. 
    Then by Lemma \ref{Lem:constmap} there must exist $0 \leq i < j \leq N$ such that $u_i, u_j$ do not vanish identically and so we get a well-defined rational map 
    $$
    \begin{tikzcd}
    \overline{\varphi} : \Pb^1 \arrow[r, dashed, "\varphi_{[\vec u]}"] & \Pcv \arrow[r, dashed] &\mathcal{P}(\lambda_i, \lambda_j) \arrow[r,"c"] & \Pb^1
    \end{tikzcd}
    $$
    by composing $\varphi_{[\vec u]}$ first with the projection to the $(i,j)$-coordinates and then with the coarse moduli space map $c$ of $\mathcal{P}(\lambda_i, \lambda_j)$. Then clearly the stabilizer group of $\varphi_{[\vec u]}$ must also fix $\overline{\varphi}$.
    
    Now the degree of $\overline{\varphi}$ onto $\Pb^1$ is given by $\mathrm{lcm}(\lambda_i, \lambda_j) \cdot n$. Therefore, by the assumption on the characteristic, the degree of $\overline{\varphi}$ is coprime to the characteristic of the base field, and thus $\overline{\varphi}$ is generically \'etale. Now choose $P_1, P_2, P_3$ general points in its image, then the preimages $\overline{\varphi}^{-1}(P_j)$ are disjoint unions of reduced points.
    
    The stabilizer acts by permutation on these preimages, so we have a group homomorphism
    \begin{equation} \label{eqn:stabsubgroupSym}
    (\PGL_2)_{[\vec u]} \to \bigoplus_{j=1}^3 \mathrm{Sym}(\varphi_{[\vec u]}^{-1}(P_j))\,,
    \end{equation}
    where the group scheme on the right is a constant (and \'etale) group scheme over the base field. We claim that the above morphism is injective. Indeed, the kernel is a subgroup of $\PGL_2$ fixing the fibres over three points. In particular it fixes three distinct points in $\Pb^1$ and so, as in the proof of Lemma \ref{Lem:reducedinclusion}, we see that the kernel is trivial (essentially using that the action of $\PGL_2$ on $\Pb^1$ is simply $3$-transitive). We conclude that $(\PGL_2)_{[\vec u]}$ is \'etale as a subgroup scheme of an \'etale group scheme. Finally note that the cardinality of the preimages $\overline{\varphi}^{-1}(P_j)$, which is the degree of $\overline{\varphi}$, is bounded by \eqref{eqn:maxdegree}. Then, in the second part of the result above, by assumption the base characteristic does not divide the order of the group on the right hand side of \eqref{eqn:stabsubgroupSym}, and thus this group is tame as well. The same then holds for the subgroup $(\PGL_2)_{[\vec u]}$.
    
    On the other hand, assume that $[\vec u]$ has at least three distinct base points. We then observe that the stabilizer group $(\PGL_2)_{[\vec u]}$ is a subgroup of the stabilizer $\Stab(Z)$ of the base locus
    $$Z = V(u_i~;~i=0, \ldots, N) \subseteq \Pb^1\,.$$
    Hence it suffices to show that $\Stab(Z)$ is finite, reduced and tame. For this, note that the bound on the base characteristic ensures that it does not divide the multiplicity of any of the base points. Thus by Proposition \ref{Pro:stabilizerfatpoints} we conclude that the stabilizer is finite and reduced. Moreover, by Lemma \ref{Lem:reducedinclusion} we have that it is in fact a subgroup of the symmetric group acting on the base points. 
    Since there are at most
    \[
    \max_{i=0, \ldots, N} \lambda_i \cdot n
    \]
    such base points, the bound on the characteristic ensures that it does not divide the order of the group. Hence we conclude that $\Stab(Z)$ and its subgroup $(\PGL_2)_{[\vec u]}$ are tame, finishing the proof.
    \end{proof}

    \begin{cor} \label{Cor:finredtamecrit}
    Let $\mathrm{char}(K)=0$ or greater than \eqref{eqn:maxdegree} and let $[\vec u] \in \Pov$ be a geometric point. Then the $\PGL_2$-stabilizer of $[\vec u]$ is \emph{not} finite, reduced and tame if and only if the $\PGL_2$-orbit of $[\vec u]$ contains a point $[\vec v]$ such that
    \[v_i = a_i \cdot (X^e Y^f)^{\lambda_i/\ell}\,,\]
    where $[\vec a] \in \Pcv$, $\ell = \gcd(\lambda_i : i=0,\ldots, N \text{ s.t. }a_i \neq 0)$ and $e,f \geq 0$ such that $e + f = \ell \cdot n$.
    \end{cor}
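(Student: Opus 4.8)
The plan is to obtain the corollary by combining the contrapositive of Proposition~\ref{Prop:finredstabilizercrit} with the normal form for non-finite maps supplied by Lemma~\ref{Lem:constmap}. Since $[\vec u]$ is a geometric point I may work over an algebraically closed base field, and since the stabilizers of two points in the same $\PGL_2$-orbit are conjugate in $\PGL_2$ (hence isomorphic as group schemes), the property of being finite, reduced and tame is constant along orbits; thus throughout I am free to replace $[\vec u]$ by any point of its orbit.

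For the forward implication I would argue by contraposition. Under the standing hypothesis that $\mathrm{char}(K)=0$ or is greater than \eqref{eqn:maxdegree}, Proposition~\ref{Prop:finredstabilizercrit} shows that the $\PGL_2$-stabilizer of $[\vec u]$ is finite, reduced and tame as soon as $\varphi_{[\vec u]}$ is finite or $[\vec u]$ has at least three distinct base points. Hence, if the stabilizer fails to have all three properties, then $\varphi_{[\vec u]}$ is not finite and $[\vec u]$ has at most two distinct base points. The first conclusion lets me invoke Lemma~\ref{Lem:constmap} to write $u_i = a_i \cdot U^{\lambda_i/\ell}$ for suitable $[\vec a]\in\Pcv$, $\ell = \gcd(\lambda_i : a_i\neq 0)$ and $U\in H^0(\Pb^1,\mathcal{O}_{\Pb^1}(\ell n))$; as some $a_i\neq 0$, the base locus of $[\vec u]$ coincides with the zero locus of $U$. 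The second conclusion then says that the nonzero form $U$ of positive degree $\ell n$ has at most two distinct roots over the algebraically closed base, so a suitable element of $\PGL_2$ carries these roots to $[1:0]$ and $[0:1]$ and transforms $U$ into a scalar multiple of $X^eY^f$ with $e+f=\deg U=\ell n$. Absorbing the overall scalar into the $\Gb_m$-action (which scales the $i$-th block precisely by a $\lambda_i$-th power), this exhibits a point $[\vec v]$ of the orbit with $v_i = a_i\cdot(X^eY^f)^{\lambda_i/\ell}$, which is the asserted normal form.

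For the converse I would use orbit-invariance to reduce to a point $[\vec v]$ already in normal form, and show directly that its $\PGL_2$-stabilizer is \emph{infinite}, which in particular prevents it from being finite, reduced and tame. The idea is to exhibit the diagonal torus $T=\{[\mathrm{diag}(t,1)]\}\subseteq\PGL_2$ inside the stabilizer. This $T$ scales $X$ by $t$ and fixes $Y$, hence sends each monomial $v_i=a_i(X^eY^f)^{\lambda_i/\ell}$ to $t^{e\lambda_i/\ell}v_i=(t^{e/\ell})^{\lambda_i}v_i$. Because the $\Gb_m$-action defining $\Pov$ rescales the $i$-th block with weight $\lambda_i$, choosing $\zeta=t^{e/\ell}$ (an $\ell$-th-root datum that exists at every geometric point) undoes this scaling simultaneously for all $i$, so that $[\mathrm{diag}(t,1)]$ fixes $[\vec v]$ in $\Pov$. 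Thus $T$ is contained in the stabilizer, which is therefore at least one-dimensional and hence not finite.

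The step I expect to require the most care is the weight bookkeeping in the converse: one must check that the single $\Gb_m$-element $\zeta$ simultaneously corrects the torus-scaling factors $t^{e\lambda_i/\ell}$ for every index $i$, and this is exactly guaranteed by the factorization $t^{e\lambda_i/\ell}=(t^{e/\ell})^{\lambda_i}$ together with the definition $\ell=\gcd(\lambda_i:a_i\neq 0)$, the fractional exponent $e/\ell$ being harmless over the algebraically closed base. The remaining ingredients, namely identifying the base locus with the zeros of $U$ and normalizing $U$ to $X^eY^f$ by an automorphism, are elementary, so that the whole argument reduces to the clean logical combination of Proposition~\ref{Prop:finredstabilizercrit} and Lemma~\ref{Lem:constmap}.
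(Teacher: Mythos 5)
Your proposal is correct and follows essentially the same route as the paper: the forward direction is the contrapositive of Proposition~\ref{Prop:finredstabilizercrit} combined with the normal form from Lemma~\ref{Lem:constmap} and a $\PGL_2$-translation of the (at most two) base points to $\{0,\infty\}$, and the converse exhibits a one-dimensional torus in the stabilizer by compensating the scaling of a coordinate with a suitable root in the $\Gb_m$-action defining $\Pov$. The only differences are cosmetic (you apply the lemma before normalizing the base points rather than after, and you scale $X$ where the paper scales $Y$ by an $\ell$-th power), and your explicit weight bookkeeping in the converse is in fact slightly more careful than the paper's one-line remark.
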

    \begin{proof}
    If the stabilizer of $[\vec u]$ is {not} finite, reduced and tame, by Proposition \ref{Prop:finredstabilizercrit} we know that $[\vec u]$ has at most two base points  and the map $\varphi_{[\vec u]}$ is finite. By the $\PGL_2$ action we find a point $[\vec v]$ in the orbit of $[\vec u]$ such that its base points are contained in $\{0, \infty\}$. Then we can conclude that $[\vec v]$ has the form above using Lemma \ref{Lem:constmap}, together with the fact that the base points of $[\vec v]$ being inside $\{0, \infty\}$ implies that the section $U$ from Lemma \ref{Lem:constmap} must have the form $U = X^e Y^f$.
    Conversely, it is immediate that for $[\vec v]$ of the form above, the $\PGL_2$-stabilizer is of positive dimension (we can scale the coordinate $Y$ by some $\ell$-th power of a scalar), proving the converse direction.
    \end{proof}
    We are now ready to prove Theorem~\ref{Thm:QuotisDM}.
    \begin{proof}[Proof of Theorem~\ref{Thm:QuotisDM}]
    The stack $[\mathcal{V}/\PGL_2]$ is algebraic by \cite[Theorem 4.1]{Romagny}. Since $\mathcal{V}$ is smooth and $\mathcal{V} \to [\mathcal{V}/\PGL_2]$ is a principal $\PGL_2$-bundle (and hence also a smooth morphism), the smoothness of the quotient follows. It remains to show that it is a Deligne--Mumford stack. In the case that $\mathcal{V}$ has generically trivial stabilizer (i.e., when the $\lambda_i$ share no common factor) this follows from \cite[Proposition C.3]{Schmitt}. If this is not satisfied, we can more generally argue as follows: since we already know that $[\mathcal{V}/\PGL_2]$ is algebraic, it suffices to show that the diagonal morphism $\Delta_{[\mathcal{V}/\PGL_2]}$ is unramified, see \cite[Tag 06N3]{Stacks}. This can be checked on geometric fibers, and the non-empty geometric fibers are precisely the automorphism groups of points of $[\mathcal{V}/\PGL_2]$. 
    By Lemma \ref{Lem:stabilizerextension}, these groups are extensions of the automorphism groups of points in $\mathcal{V}$ and their stabilizer groups in $\PGL_2$. The automorphism groups of the points themselves are finite and unramified by Proposition \ref{prop:wtprojtame}, whereas the stabilizers are by assumption. Since the category of finite, unramified group schemes is closed under extensions (see \cite[Proposition 40]{Stix}), we have that the automorphism groups of geometric points of $[\mathcal{V}/\PGL_2]$ are finite and unramified, so $[\mathcal{V}/\PGL_2]$ is Deligne--Mumford.
    
    The additional condition that $\mathrm{char}(K)$ does not divide any of the $\lambda_i$ or orders of the stabilizers, ensures that both the automorphism groups of points in $\mathcal{V}$ and their $\PGL_2$-stabilizers are tame, and thus so are their extensions giving the geometric stabilizers of $[\mathcal{V}/\PGL_2]$.
    
    
    
    Finally, for geometric points $[\vec u] \in \mathcal{V} = \Hom_n(\Pb^1,\Pcv)$ we have by definition that $[\vec u]$ has no base points and, since the degree $n$ is positive, the map $\varphi_{[\vec u]} : \Pb^1 \to \Pcv$ is non-constant. Then it follows that it has finite, reduced and tame $\PGL_2$-stabilizer by Proposition \ref{Prop:finredstabilizercrit}.
    \end{proof}

    \medskip

    \subsection{Coarse moduli spaces via Geometric Invariant Theory}
    \label{sec:GIT}
    
    \par We now apply the machinery of Geometric Invariant Theory to explicitly construct the coarse moduli space $M_{n}^{\lambdavec}$ of $\M_{n}^{\lambdavec}$ as well as suitable modular compactifications and corresponding coarse moduli spaces of them. For this, we use the natural inclusion $\Hom_n(\Pb^1,\Pcv) \subsetneq \Pov$ in the ambient weighted projective stack $\Pov$, analyze the stable and semistable points for the action of $\PGL_2$ on $\Pov$ and form their quotients.

    As GIT is formulated in the setting of a group action on a variety, however, we must be careful here, since in general $\Pov$ is a Deligne--Mumford stack. To remedy this, we note that the coarse moduli space $\Pb(\Lambdavec)$ of $\Pov$, which is a classical weighted projective space, has a  natural action of $\PGL_2$ compatible with the action of $\PGL_2$ on $\Pov$. 
    Then we can use the Hilbert--Mumford criterion to analyze the (semi)stable loci for this action.

    \begin{prop} \label{Prop:stablesemistablecrit}
    A point $[\vec{u}] = [u_0: \ldots : u_N] \in \Pb(\Lambdavec)$ is semistable for the action of $\PGL_2$ if and only if for each $p \in \Pb^1$ there exists $0 \leq i \leq N$  such that
    \begin{equation} \label{eqn:ssinequality}
        \mathrm{ord}_p(u_i) \leq  \frac{n \lambda_i}{2} \,
    \end{equation}
    The point is stable if and only if for each $p \in \Pb^1$ there exists $0 \leq i \leq N$ such that the inequality \eqref{eqn:ssinequality} is strict. 
    In particular, the sets of stable and semistable points agree if and only if the degree $n$ and all numbers $\lambda_i$ are odd.
    \end{prop}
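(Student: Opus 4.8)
The plan is to apply the Hilbert--Mumford numerical criterion to the $\SL_2$-action on the projective variety $\Pb(\Lambdavec)$ inducing the $\PGL_2$-action (as in the GIT treatment of \cite{GIT}), and to translate the resulting inequality on one-parameter subgroups into the stated condition on vanishing orders. First I would recall that every nontrivial one-parameter subgroup of $\SL_2$ is conjugate to the standard torus $\lambda(t) = \mathrm{diag}(t, t^{-1})$, whose fixed points on $\Pb^1$ are $0 = [0:1]$ and $\infty = [1:0]$. Since the numerical function satisfies $\mu(g \cdot x, g \lambda g^{-1}) = \mu(x, \lambda)$, it suffices to evaluate $\mu([\vec u], \lambda)$ for this single torus while letting the relevant fixed point range over all $p \in \Pb^1$ through conjugation; replacing $\lambda$ by $\lambda^{-1}$ interchanges the two fixed points, so that both $0$ and $\infty$, and hence after conjugation every $p$, are covered.

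Next I would carry out the weight computation. Writing $a_{i,j}$ for the coefficient of $X^{n\lambda_i - j} Y^j$ in $u_i$ as in \eqref{eqn:coordinaterepr}, the torus $\lambda(t)$ scales the monomial $X^{n\lambda_i - j} Y^j$ by $t^{n\lambda_i - 2j}$, so $a_{i,j}$ carries $\SL_2$-weight $n\lambda_i - 2j$. The one technical point, which I regard as the heart of the argument, is that in the weighted projective space $\Pb(\Lambdavec)$ the Hilbert--Mumford weight of a coordinate must be normalized by its weighted-projective degree $\lambda_i$; this is made rigorous by lifting $[\vec u]$ to the affine cone $U_\Lambdavec$ and running the numerical criterion for the reductive group $\Gb_m \times \SL_2$ with the linearization given by the weighted-scaling character, where the factor $1/\lambda_i$ appears from the $\Gb_m$-weights. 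Granting this, one obtains
\[
\mu([\vec u], \lambda) = \max_{a_{i,j} \neq 0} \frac{2j - n\lambda_i}{\lambda_i} = n - 2 \min_{i} \frac{\mathrm{ord}_p(u_i)}{\lambda_i}\,,
\]
using that $\max\{j : a_{i,j} \neq 0\} = n\lambda_i - \mathrm{ord}_p(u_i)$ for the fixed point $p$. Thus semistability ($\mu \geq 0$ for all $\lambda$) is equivalent to $\min_i \mathrm{ord}_p(u_i)/\lambda_i \leq n/2$ for every $p$, i.e.\ to the existence of some $i$ with $\mathrm{ord}_p(u_i) \leq n\lambda_i/2$, which is exactly \eqref{eqn:ssinequality}; stability ($\mu > 0$ for all nontrivial $\lambda$) is the same condition with strict inequality.

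Finally I would deduce the ``in particular'' clause. As $\mathrm{ord}_p(u_i)$ is an integer, the inequalities $\mathrm{ord}_p(u_i) \leq n\lambda_i/2$ and $\mathrm{ord}_p(u_i) < n\lambda_i/2$ coincide precisely when $n\lambda_i/2 \notin \Z$, that is, when $n\lambda_i$ is odd. If $n$ and all $\lambda_i$ are odd, then every $n\lambda_i$ is odd, so the stable and semistable conditions agree at every $p$ and hence $\Pb(\Lambdavec)^{s} = \Pb(\Lambdavec)^{ss}$. For the converse I would exhibit a strictly semistable point whenever some $n\lambda_{i_0}$ is even: take the monomial point $u_i = X^{a_i} Y^{b_i}$ with $a_i + b_i = n\lambda_i$, setting $a_{i_0} = n\lambda_{i_0}/2$ and $a_i = \lceil n\lambda_i/2 \rceil$ for $i \neq i_0$. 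Here $\min_i \mathrm{ord}_0(u_i)/\lambda_i = n/2$ is attained at $i_0$ with no index giving a strictly smaller value, so the point is semistable but not stable at $0$; the semistability inequality also holds at $\infty$ since there $b_i/\lambda_i \le n/2$, and at every other point none of the $u_i$ vanish, so the strict inequality holds there. This produces a point of $\Pb(\Lambdavec)^{ss} \setminus \Pb(\Lambdavec)^{s}$, completing the equivalence. The main obstacle throughout is the correct bookkeeping of the weighted-projective normalization in the Hilbert--Mumford weight; once that is in place, the translation to vanishing orders and the parity analysis are routine.
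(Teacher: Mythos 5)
Your proof is correct and follows essentially the same route as the paper: the Hilbert--Mumford criterion applied to the standard torus $\mathrm{diag}(t,t^{-1})$, the weight computation $n\lambda_i-2j$ on the coefficient $a_{i,j}$, conjugation to sweep the relevant fixed point over all of $\Pb^1$, and the parity of $n\lambda_i/2$ for the final claim. The only differences are minor: you handle the weighted grading via the affine cone and a $\Gb_m\times\SL_2$-linearization where the paper instead embeds $\Pb(\Lambdavec)$ into an ordinary projective space (and the $1/\lambda_i$ normalization you emphasize is in any case immaterial for the sign of $\mu$, hence for (semi)stability), while you additionally exhibit an explicit strictly semistable monomial point proving the ``only if'' half of the last equivalence, a direction the paper leaves implicit.
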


    \begin{proof}
    The proof uses the Hilbert--Mumford numerical criterion \cite[Theorem 2.1]{GIT}, following similar arguments as in \cite[Proposition 5.1]{Miranda} or \cite[Proposition 2.2]{Silverman}. Consider the one-parameter subgroup
    \[
    \Gb_m \to \PGL_2, t \mapsto \begin{pmatrix}
    t & 0 \\ 0 & t^{-1}
    \end{pmatrix}\,.
    \]
    For $u_i = a_{i,0} X^{n \lambda_i} + \ldots + a_{i,n\lambda_i} Y^{n \lambda_i}$, this subgroup acts with weight $n \lambda_i - 2j$ on the coordinate $a_{i,j}$. By the Hilbert--Mumford criterion\footnote{~The version of the criterion we use here can be obtained via \cite[Proposition 2.3]{GIT}. Just as in the proof of \cite[Proposition 5.1]{Miranda}, we need to embed our weighted projective space $\Pb(\Lambdavec)$ into a larger (unweighted) projective space and analyze stability there since the Proposition in \cite{GIT} is formulated for ordinary projective spaces. We leave the details to the interested reader.}, the point $[\vec{u}_0]$ is \emph{not} semistable if and only if for some point $[\vec{u}]$ in the orbit of $[\vec{u}_0]$, we have that for all $i,j$ either $a_{i,j} = 0$ or $n \lambda_i - 2j<0$. Equivalently, we have $a_{i,j}=0$ for $j \leq (n \lambda_i)/2$ or in other words $\mathrm{ord}_{[1:0]}(u_i) > (n \lambda_i)/2$. Since the orbit of $[1:0]$ under $\PGL_2$ is all of $\Pb^1$, we obtain the criterion formulated above. The argument for the stable points is similar, allowing simple inequalities instead of strict inequalities. In case that both degree $n$ and weight $\lambda_i$ are odd, the number $n \lambda_i /2$ is a half-integer, and thus the inequality \eqref{eqn:ssinequality} is satisfied if and only if it is strictly satisfied, rendering stable and semistable points to coincide.
    \end{proof}
    Denote by $\Pb(\Lambdavec)^s \subseteq \Pb(\Lambdavec)^{ss} \subseteq \Pb(\Lambdavec)$ the loci of (semi)stable points and denote by $\Pov^s \subseteq \Pov^{ss} \subseteq \Pov$ their preimages under the coarse moduli space morphism $\Pov \to \Pb(\Lambdavec)$. Then on the one hand, we write
    \begin{equation}
        M_{n}^{\lambdavec,s} = \Pb(\Lambdavec)^{s} \git \PGL_2 \subseteq M_{n}^{\lambdavec,ss} = \Pb(\Lambdavec)^{ss}\git \PGL_2\,,
    \end{equation}
    the GIT-quotients of the (semi)stable open subsets of $\Pb(\Lambdavec)$.
    On the other hand, we denote
    \begin{equation}
        \M_{n}^{\lambdavec,s} = [\Pov^{s}/\PGL_2] \subseteq \M_{n}^{\lambdavec,ss} = [\Pov^{ss}/\PGL_2]\,,
    \end{equation}
    the quotient stacks of the stacky (semi)stable loci by the $\PGL_2$-action (again using the notion of quotients from \cite{Romagny}). Note that by the criterion above, for $n \geq 1$ the open substack $\Hom_n(\Pb^1,\Pcv) \subseteq \Pov$ is contained inside the stable locus, so that $\M_n^{\lambdavec}$ is naturally an open substack of $\M_{n}^{\lambdavec,s}$.
    \begin{prop} \label{Pro:stablesemistablestackproperties}
    Assume that $\mathrm{char}(K) = 0$ or greater than \eqref{eqn:maxdegree}. Then $\M_{n}^{\lambdavec,s}$ is a smooth, separated, tame Deligne--Mumford stack. 
    On the other hand, we have that $\M_{n}^{\lambdavec,ss}$ is a smooth algebraic stack.
    \end{prop}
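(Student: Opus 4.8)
The plan is to treat the two stacks separately, handling smoothness uniformly, deducing the Deligne--Mumford and tameness properties of $\M_n^{\lambdavec,s}$ from the stabilizer analysis already established, and isolating the separatedness of $\M_n^{\lambdavec,s}$ as the one genuinely new input, which comes from classical GIT.

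First I would dispose of smoothness and algebraicity, which are soft. Both $\Pov^s$ and $\Pov^{ss}$ are open substacks of the smooth stack $\Pov$, hence smooth, and each quotient map $\Pov^s \to \M_n^{\lambdavec,s}$ (resp. $\Pov^{ss}\to \M_n^{\lambdavec,ss}$) is a $\PGL_2$-torsor, in particular smooth and surjective; since smoothness descends along smooth surjections, both quotient stacks are smooth. Algebraicity of $\M_n^{\lambdavec,ss}=[\Pov^{ss}/\PGL_2]$ follows directly from \cite[Theorem 4.1]{Romagny}. This already settles the second assertion, as we make no separatedness or Deligne--Mumford claim in the strictly semistable range, where stabilizers may be positive-dimensional.

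Next, for $\M_n^{\lambdavec,s}=[\Pov^s/\PGL_2]$ I would verify the hypotheses of Theorem \ref{Thm:QuotisDM} for $\mathcal{V}=\Pov^s$, i.e.\ that $\PGL_2$ acts on the stable locus with finite, reduced and tame stabilizers. By Corollary \ref{Cor:finredtamecrit} (applicable since $\mathrm{char}(K)=0$ or exceeds \eqref{eqn:maxdegree}), a geometric point $[\vec u]$ fails to have finite, reduced and tame stabilizer precisely when its $\PGL_2$-orbit contains a point $[\vec v]$ with $v_i = a_i\,(X^e Y^f)^{\lambda_i/\ell}$, where $e+f=\ell n$ and $\ell=\gcd(\lambda_i : a_i\neq 0)$. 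The key observation is that such a $[\vec v]$ is never stable: at $[0:1]$ one has $\mathrm{ord}_{[0:1]}(v_i)=e\lambda_i/\ell$ for every $i$ with $a_i\neq 0$, so the strict form of \eqref{eqn:ssinequality} can hold at $[0:1]$ only if $2e<\ell n$, while at $[1:0]$ it requires $2f<\ell n$; since $e+f=\ell n$ these cannot both hold, so $[\vec v]$ violates stability at $[0:1]$ or at $[1:0]$. As stability is $\PGL_2$-invariant, no stable point has such a point in its orbit, so every geometric point of $\Pov^s$ has finite, reduced and tame stabilizer. Since $\mathrm{char}(K)$ moreover divides none of the $\lambda_i$ (being $0$ or larger than $\max_{i<j}\mathrm{lcm}(\lambda_i,\lambda_j)\cdot n\geq \max_i \lambda_i$), Theorem \ref{Thm:QuotisDM} yields that $\M_n^{\lambdavec,s}$ is a smooth, tame Deligne--Mumford stack of finite type with affine diagonal.

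The main obstacle is separatedness of $\M_n^{\lambdavec,s}$, which I would obtain by transporting the properness of the $\PGL_2$-action from the GIT-stable locus of the coarse space up to the stack. The action of $\PGL_2$ on $\Pb(\Lambdavec)^s$ is proper by classical GIT \cite{GIT} for the (properly) stable locus, i.e.\ the action map $\overline{\Psi}\colon \PGL_2\times \Pb(\Lambdavec)^s \to \Pb(\Lambdavec)^s\times \Pb(\Lambdavec)^s$, $(g,y)\mapsto (g\cdot y, y)$, is proper. Writing $c^s\colon \Pov^s\to \Pb(\Lambdavec)^s$ for the proper, separated and $\PGL_2$-equivariant coarse moduli space morphism, and $\Psi\colon \PGL_2\times \Pov^s\to \Pov^s\times \Pov^s$ for the analogous action map, equivariance gives $(c^s\times c^s)\circ \Psi = \overline{\Psi}\circ(\mathrm{id}_{\PGL_2}\times c^s)$, both sides sending $(g,u)\mapsto (g\cdot c^s(u), c^s(u))$. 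The right-hand side is proper as a composite of proper morphisms, while $c^s\times c^s$ is separated as a product of separated morphisms; hence by the cancellation property of proper morphisms $\Psi$ itself is proper. Thus the $\PGL_2$-action on $\Pov^s$ is proper, which is exactly the statement that $\M_n^{\lambdavec,s}=[\Pov^s/\PGL_2]$ is separated. The two delicate points I would watch are (i) matching the strict-inequality notion of stability in Proposition \ref{Prop:stablesemistablecrit} with Mumford's properly-stable locus, so that properness of the action is genuinely available, and (ii) confirming that $c^s$ is proper and separated so that the cancellation step applies.
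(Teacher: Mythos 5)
Your proposal is correct and follows essentially the same route as the paper: smoothness/algebraicity via \cite[Theorem 4.1]{Romagny}, the tame Deligne--Mumford property of $\M_n^{\lambdavec,s}$ by feeding Corollary \ref{Cor:finredtamecrit} into Theorem \ref{Thm:QuotisDM} and checking via Proposition \ref{Prop:stablesemistablecrit} at $0$ and $\infty$ that the points $v_i = a_i (X^eY^f)^{\lambda_i/\ell}$ with $e+f=\ell n$ are never stable (you argue the contrapositive of the paper's contradiction $n<n$, but it is the same computation), and separatedness by descending properness of the diagonal to properness of the action map and deducing the latter from \cite[Converse 1.13]{GIT} on the coarse space. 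Your explicit cancellation argument using properness and separatedness of $c^s\colon\Pov^s\to\Pb(\Lambdavec)^s$ is exactly the content of the paper's footnote on why the properness of the action can be checked on coarse moduli spaces.
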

    \begin{proof}
    The fact that $\M_{n}^{\lambdavec,s}$ and $\M_{n}^{\lambdavec,ss}$ are smooth algebraic stacks again follows from their construction as stack quotients by \cite[Theorem 4.1]{Romagny}. 
    To see the separatedness\footnote{~See \cite[Tag 04YV]{Stacks} for a reminder of the definition.} of $\M_{n}^{\lambdavec,s}$ we need to show that the diagonal map $\Delta$ for $\M_{n}^{\lambdavec,s}$ is proper. Looking at the fibre diagram
    \[
    \begin{tikzcd}
    \PGL_2 \times \Pov^{s} \arrow[d] \arrow[r,"\sigma"] & \Pov^{s} \times \Pov^{s} \arrow[d]\\
    \M_{n}^{\lambdavec,s} \arrow[r,"\Delta"] & \M_{n}^{\lambdavec,s} \times \M_{n}^{\lambdavec,s}
    \end{tikzcd}
    \]
    this can be checked by proving that the action map $\sigma$ is proper. By \cite[Converse 1.13]{GIT} this follows from the fact that $\Pov^{s}$ is the locus of (properly) stable points for a $\PGL_2$-linearized line bundle on $\Pov$.\footnote{~Formally the cited result only applies in the realm of schemes. However, using \cite[Theorem 3.10]{Vakil} together with the fact that the coarse moduli space map $\Pov^{s} \to \mathbb{P}(\Lambda)^{s}$ is proper, the properness of the action can be checked on coarse moduli spaces, where \cite[Converse 1.13]{GIT} applies.}
    
    Next, let us prove that $\M_{n}^{\lambdavec,s}$ is a tame Deligne--Mumford stack. Here, by Theorem~\ref{Thm:QuotisDM}, it suffices to show that $\PGL_2$ acts with finite, reduced and tame stabilizers at geometric points of $\Pov^{s}$.
    Say that $[\vec u] \in \Pov^{s}$ is a geometric point where this is not satisfied.
    Then by Corollary \ref{Cor:finredtamecrit}, replacing $[\vec u]$ by an element of its $\PGL_2$-orbit, we can assume that $[\vec u]$ is of the form
    \[u_i = a_i \cdot (X^e Y^f)^{\lambda_i/\ell}\,,\]
    where $[\vec a] \in \Pcv$, $\ell = \gcd(\lambda_i : i=0,\ldots, N \text{ s.t. }a_i \neq 0)$ and $e,f \geq 0$ such that $e + f = \ell \cdot n$.
    Applying the criterion for stability from Proposition \ref{Prop:stablesemistablecrit} at the points $p=0, \infty \in \Pb^1$ we see that there exist $i_0, i_\infty \in \{0, \ldots, N\}$ such that
    \begin{align*}
        \mathrm{ord}_0(u_{i_0}) = e \frac{\lambda_{i_0}}{\ell} & < \frac{\lambda_{i_0} n}{2}\,,\\
        \mathrm{ord}_\infty(u_{i_\infty}) = f \frac{\lambda_{i_\infty}}{\ell} & < \frac{\lambda_{i_\infty}n}{2}\,.
    \end{align*}
    Dividing the first and second equality by $\lambda_{i_0}$ and $\lambda_{i_\infty}$, respectively, and adding the two resulting inequalities, we obtain
    \[
    (e+f)\frac{1}{\ell} = \ell \cdot n \frac{1}{\ell} = n < n\,,
    \]
    which gives a contradiction. This shows that indeed all stable points of $\Pov$ have finite, reduced and tame stabilizers.
    \end{proof}
    We are now ready to prove Theorem~\ref{Thm:Moduli_GIT}:
    \begin{proof}[Proof of Theorem~\ref{Thm:Moduli_GIT}]
    Part a) of the theorem follows from \cite[Theorem 4.1]{Romagny} and the fact that $\Pov$ is irreducible and of finite type. Part b) was proven in Proposition \ref{Pro:stablesemistablestackproperties}. Here, the fact that the Deligne--Mumford property of $\M_n^{\lambdavec}$ only requires $\mathrm{char}(K)$ not dividing any of the $\lambda_i$ or $n$ follows by an application of Proposition \ref{Prop:finredstabilizercrit} : the geometric points in $\M_n^{\lambdavec}$ correspond to finite morphisms $\Pb^1 \to \Pcv$ and so the weaker assumption on the characteristic is sufficient to show that their stabilizer groups are \'etale.
    
    The existence and properties of the geometric and categorical quotients of the sets of (semi)stable points follow from \cite[Theorem 1.10]{GIT}, with the projectivity of $M_{n}^{\lambdavec,ss}$ following from the remark above \cite[Corollary 1.12]{GIT}. The fact that $M_{n}^{\lambdavec}$, $M_{n}^{\lambdavec,s}$ are coarse moduli spaces for $\M_{n}^{\lambdavec}$, $\M_{n}^{\lambdavec,s}$ follows from a variation of the arguments in \cite[Lemma C.4, Remark C.5]{Schmitt}.\footnote{~The statement here is basically that for a group action on a stack, the operation of taking a moduli space followed by a geometric quotient is equivalent to taking the stack quotient followed by a moduli space. The original argument in \cite{Schmitt} uses the notion of a good moduli space from \cite{Alper}, but to work better in positive characteristic (where $\PGL_2$ is not linearly reductive), this can be replaced by the classical notion of a coarse moduli space, i.e., being a bijection on geometric points and a universal map to algebraic spaces. Then the arguments in \cite{Schmitt} go through basically unchanged, replacing the cited results from \cite{Alper} by \cite[Proposition 9.1]{KM}.}

    
    
    Finally, for part d) by Proposition \ref{Prop:stablesemistablecrit} the assumption that all $\lambda_i$ and $n$ are odd implies that stable and semistable points coincide. By \cite{KM} (or the modern reformulation \cite[Theorem 1.1]{Conrad2}) the coarse moduli space map $\M_n^{\lambdavec,ss} \to M_n^{\lambdavec,ss}$ is proper, and so the properness of $\M_n^{\lambdavec,ss}$ follows from the properness of the variety $M_n^{\lambdavec,ss}$.
    \end{proof}
    \begin{rmk}
    It is an interesting question whether the stack $\M_{n}^{\lambdavec,ss}$ satisfies the existence part of the valuative criterion of properness even when the stable and semistable points do not coincide. However, since we do not need results in this direction, we will not pursue this question in the following.
    \end{rmk}

    \medskip

    \section{Motives \& Weighted point counts over finite fields}
    \label{sec:Motive}

    Ekedahl in 2009 introduced the Grothendieck ring $K_0(\mathrm{Stck}_K)$ of algebraic stacks extending the classical Grothendieck ring $K_0(\mathrm{Var}_K)$ of varieties first defined by Grothendieck in 1964 in a letter to Serre. In \cite{Ekedahl}, every algebraic stack $\Xc$ of finite type over $K$ with affine stabilizers has the motivic class, i.e., the \emph{motive} of $\Xc$ as $\{\Xc\} \in K_0(\text{Stck}_K)$.

    \begin{defn}\label{defn:GrothringStck}
        \cite[\S 1]{Ekedahl}
        Fix a field $K$. Then the \emph{Grothendieck ring $K_0(\mathrm{Stck}_K)$ of algebraic stacks of finite type over $K$ all of whose stabilizer group schemes are affine} is an abelian group generated by isomorphism classes of algebraic stacks $\{\Xc\}$ modulo relations:
        \begin{itemize}
            \item $\{\Xc\}=\{\Zc\}+\{\Xc \setminus \Zc\}$ for $\Zc \subset \Xc$ a closed substack,
            \item $\{\Ec\}=\{\Xc \times \Ab^n \}$ for $\Ec$ a vector bundle of rank $n$ on $\Xc$.
        \end{itemize}
        Multiplication on $K_0(\mathrm{Stck}_K)$ is induced by $\{\Xc\}\{\Yc\}\coloneqq\{\Xc \times_K \Yc\}$. 
        There is a distinguished element $\Lb\coloneqq\{\A^1\} \in K_0(\mathrm{Stck}_K)$, called the \emph{Lefschetz motive}.
    \end{defn}

    We recall the definition of a weighted point count of an algebraic stack $\Xc$ over $\Fb_q$:

    \begin{defn}\label{def:wtcount}
        The weighted point count of $\Xc$ over $\Fb_q$ is defined as a sum:
        \[
        \#_q(\Xc)\coloneqq\sum_{x \in \Xc(\Fb_q)/\sim}\frac{1}{|\mathrm{Aut}(x)|},
        \]
        where $\Xc(\Fb_q)/\sim$ is the set of $\Fb_q$--isomorphism classes of $\Fb_q$--points of $\Xc$ (i.e., the set of non--weighted points of $\Xc$ over $\Fb_q$), and we take $\frac{1}{|\mathrm{Aut}(x)|}$ to be $0$ when $|\mathrm{Aut}(x)|=\infty$.
    \end{defn}
    
    A priori, the weighted point count can be $\infty$, but when $\Xc$ is of finite type, then the stratification of $\Xc$ by schemes as in \cite[Proof of Lemma 3.2.2]{Behrend} implies that $\Xc(\Fb_q)/\sim$ is a finite set, so that $\#_q(\Xc)<\infty$.
    As the Grothendieck ring $K_0(\mathrm{Stck}_K)$ of algebraic stacks is universal for additive invariants of the category $\mathrm{Stck}_K$ of algebraic stacks over $K$, it is easy to see that when $K=\Fb_q$, the point counting measure $\{\Xc\} \mapsto \#_q(\Xc)$ gives a well-defined ring homomorphism $\#_q: K_0(\mathrm{Stck}_{\Fb_q}) \rightarrow \Q$ (c.f. \cite[\S 2]{Ekedahl}). 

    \medskip

    Recall that an algebraic group $G$ is \textit{special} in the sense of \cite{Serre} and \cite{Grothendieck},  if every $G$-torsor is Zariski-locally trivial; for example $\GL_{d},~\SL_{d}$ are special and $\PGL_2,~\PGL_3$ are non-special. If $\mathcal X \rightarrow \mathcal Y$ is a $G$-torsor and $G$ is special, then we have $\{\mathcal X\} = \{G\} \cdot \{\mathcal Y\}$ (this is immediate when $\mathcal Y$ is a scheme, and it was shown by \cite[Proposition 1.1 iii)]{Ekedahl} when $\mathcal Y$ is an algebraic stack). In particular, applying this multiplicative relation to the universal torsor $\Spec~K \to \Bc G$ where $\Bc G$ is the classifying stack for the group $G$ we acquire the formula $\{G\}^{-1} = \{\Bc G\}$ for special groups as the motive of the base $\Spec~K$ is the multiplicative identity in $K_0(\mathrm{Stck}_{/K})$. Since many algebraic stacks can be written locally as a quotient of a scheme by an algebraic group $\Gb_m$, the following lemma is useful:
    
    \begin{lem}\label{lem:Gm_quot}
        For any $\Gb_m$-torsor $\mathcal X \rightarrow \mathcal Y$ of finite type algebraic stacks, we have $[\mathcal Y]=[\mathcal X][\Gb_m]^{-1}$.
    \end{lem}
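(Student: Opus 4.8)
The plan is to prove that for any $\Gb_m$-torsor $\mathcal{X} \to \mathcal{Y}$ of finite type algebraic stacks, we have $[\mathcal{Y}] = [\mathcal{X}][\Gb_m]^{-1}$ in $K_0(\mathrm{Stck}_K)$. The key observation is that $\Gb_m = \GL_1$ is a special group, so by the multiplicative property of torsors for special groups recalled just before the lemma, a $\Gb_m$-torsor satisfies $[\mathcal{X}] = [\Gb_m] \cdot [\mathcal{Y}]$. From here the lemma is essentially formal.

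**First I would** invoke the statement (from the paragraph preceding the lemma, attributed to \cite[Proposition 1.1 iii)]{Ekedahl}) that for a $G$-torsor $\mathcal{X} \to \mathcal{Y}$ with $G$ special, one has the relation
\[
[\mathcal{X}] = [G] \cdot [\mathcal{Y}]
\]
in $K_0(\mathrm{Stck}_K)$, valid even when $\mathcal{Y}$ is an algebraic stack (not merely a scheme). Since $\Gb_m = \GL_1$ is special, applying this with $G = \Gb_m$ yields $[\mathcal{X}] = [\Gb_m] \cdot [\mathcal{Y}]$.

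**The remaining step** is to divide by $[\Gb_m]$. The class $[\Gb_m] = \Lb - 1$ is a unit in $K_0(\mathrm{Stck}_K)$: this is exactly the content of the formula $[\Gb_m]^{-1} = [\Bc \Gb_m]$ recorded in the preceding discussion, which holds because $\Gb_m$ is special and hence $\{G\}^{-1} = \{\Bc G\}$ applies. Multiplying both sides of $[\mathcal{X}] = [\Gb_m] \cdot [\mathcal{Y}]$ by the inverse $[\Gb_m]^{-1}$ gives precisely $[\mathcal{Y}] = [\mathcal{X}][\Gb_m]^{-1}$, as claimed.

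**The only subtlety** worth flagging is the appeal to Ekedahl's result in the full generality of a torsor over an algebraic \emph{stack} base $\mathcal{Y}$, rather than over a scheme; over a scheme base the relation is immediate from Zariski-local triviality of $\Gb_m$-torsors and the scissor relations, whereas the stacky case is precisely what \cite[Proposition 1.1 iii)]{Ekedahl} supplies. Since both $\mathcal{X}$ and $\mathcal{Y}$ are assumed of finite type, their motivic classes are well-defined elements of the Grothendieck ring, so no further finiteness checks are needed. Thus there is no genuine obstacle here: the proof is a one-line consequence of specialness of $\Gb_m$ together with the invertibility of $[\Gb_m]$.
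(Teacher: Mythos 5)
Your proof is correct and follows essentially the same route as the paper, which likewise deduces the lemma from Ekedahl's multiplicativity for torsors under special groups (\cite[Proposition 1.1 iii)]{Ekedahl}) together with the invertibility of $\{\Gb_m\}$ in $K_0(\mathrm{Stck}_K)$. Nothing further is needed.
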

    \begin{proof}
        This follows from \cite[Proposition 1.1 iii), 1.4]{Ekedahl} and the definition of $K_0^{\mathrm{Zar}}(\mathrm{Stck}_K)$ in \cite[\S 1]{Ekedahl}.
    \end{proof}

    Let us also recall the motive of $\SL_{2}$ in the Grothendieck ring of stacks $K_0(\mathrm{Stck}_{/K})$. 
    \begin{lem}\label{lem:SL_Motive}
    We have the motive $\{\SL_{2}\} = \Lb \cdot (\Lb^{2}-1)$ .
    \end{lem}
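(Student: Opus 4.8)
The plan is to exhibit $\SL_2$ as the total space of a Zariski-locally trivial affine bundle over $\Ab^2 \setminus \{0\}$ and then read off the motive directly from the defining relations of $K_0(\mathrm{Stck}_K)$. First I would consider the morphism $\pi : \SL_2 \to \Ab^2$ sending a matrix $\begin{pmatrix} a & b \\ c & d \end{pmatrix}$ to its first column $(a,c)$. The condition $ad - bc = 1$ forces $(a,c) \neq (0,0)$, so $\pi$ factors through the open subvariety $\Ab^2 \setminus \{0\}$; conversely every nonzero column arises as a first column, so $\pi$ is surjective onto $\Ab^2 \setminus \{0\}$.

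Next I would identify the fibres of $\pi$. Over a point $(a,c) \neq 0$, the fibre is the set of pairs $(b,d)$ solving the single affine-linear equation $ad - bc = 1$; since the linear functional $(b,d) \mapsto ad - bc$ is nonzero whenever $(a,c) \neq 0$, its solution set is a coset of a one-dimensional linear subspace, hence an affine line $\Ab^1$. To see that $\pi$ is in fact a Zariski-locally trivial $\Ab^1$-bundle, I would cover $\Ab^2 \setminus \{0\}$ by the two opens $\{a \neq 0\}$ and $\{c \neq 0\}$: on $\{a \neq 0\}$ the assignment $b \mapsto (a,b,c,(1+bc)/a)$ trivializes the bundle with fibre coordinate $b$, and symmetrically on $\{c \neq 0\}$. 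Equivalently, one may observe that $\pi$ is the quotient map of the transitive $\SL_2$-action on $\Ab^2 \setminus \{0\}$, whose point stabilizer is the unipotent subgroup isomorphic to $\Gb_a$; thus $\pi$ is a $\Gb_a$-torsor, and $\Gb_a$ is special, which again yields local triviality.

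With local triviality in hand, I would stratify the base and apply the cut-and-paste and vector-bundle relations of Definition \ref{defn:GrothringStck} to conclude $\{\SL_2\} = \Lb \cdot \{\Ab^2 \setminus \{0\}\}$. Finally, from $\{\Ab^2\} = \Lb^2$ together with the decomposition $\{\Ab^2\} = \{0\} + \{\Ab^2 \setminus \{0\}\} = 1 + \{\Ab^2 \setminus \{0\}\}$ one obtains $\{\Ab^2 \setminus \{0\}\} = \Lb^2 - 1$, whence $\{\SL_2\} = \Lb \cdot (\Lb^2 - 1)$, as claimed. The only point requiring any care — and it is mild — is the verification of Zariski-local triviality of $\pi$, which is what allows the vector-bundle relation (a priori stated for genuine vector bundles) to apply to the affine bundle $\pi$; the explicit trivializations above, or the specialness of $\Gb_a$, settle this immediately.
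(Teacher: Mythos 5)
Your argument is correct, but it takes a different route from the paper. The paper's proof observes that the determinant map $\GL_2 \to \Gb_m$ is an $\SL_2$-torsor and that $\SL_2$ is special, so $\{\SL_2\} = \{\GL_2\}/\{\Gb_m\}$; it then quotes Ekedahl's formula $\{\GL_d\} = \prod_{i=0}^{d-1}(\Lb^d - \Lb^i)$ together with $\{\Gb_m\} = \Lb - 1$ to land on $\Lb\cdot(\Lb^2-1)$. You instead fibre $\SL_2$ directly over $\Ab^2\setminus\{0\}$ by the first column, check Zariski-local triviality of the resulting $\Ab^1$-bundle either by the explicit trivializations over $\{a\neq 0\}$ and $\{c\neq 0\}$ or by identifying it as a torsor under the special group $\Gb_a$, and then cut and paste. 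Both are valid; your version is more elementary and self-contained, avoiding the appeal to specialness of $\SL_2$ and to the cited formula for $\{\GL_d\}$ (whose standard proof is, in effect, an iterated version of your column fibration), while the paper's version is a one-line deduction that generalizes uniformly to $\{\SL_d\}$ for all $d$. Your care about the vector-bundle relation applying only after stratifying into pieces where the affine bundle is genuinely trivial is exactly the right point to flag, and your handling of it is correct.
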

    \begin{proof}
        Note that the determinant $\GL_{d} \to \Gb_m$ is an $\SL_{d}$-torsor. As $\SL_{d}$ is special, by the multiplicative relation for motives we have $\{\SL_{d}\} = \frac{\{\GL_{d}\}}{\{\Gb_m\}} \in K_0(\mathrm{Stck}_{/K})$.
        By \cite[Proposition 1.1 i)]{Ekedahl}, we know
        \begin{align*}
            \{\GL_{d}\} &= \prod\limits_{i=0}^{d-1} (\Lb^{d}-\Lb^{i}) = (\Lb^{d}-1)(\Lb^{d}-\Lb) \cdots (\Lb^{d}-\Lb^{d-1})\,,\\
            \{\Gb_m\} &= \{\GL_{1}\} = (\Lb-1)\,.
        \end{align*}
        Thus we have $\{\SL_{d}\} = (\Lb-1)^{-1} \cdot \prod\limits_{i=0}^{d-1} (\Lb^{d}-\Lb^{i})$ which in the case of $d=2$ renders $\{\SL_{2}\} = \Lb \cdot (\Lb^{2}-1)$ .
    \end{proof}
    Since the map $\GL_2 \to \PGL_2$ is a $\Gb_m$-torsor, we can apply Lemma \ref{lem:Gm_quot} directly to conclude that 
    $$\{\PGL_2\} = \{\GL_2\}\{\Gb_m\}^{-1}=\{\SL_2\}\,.$$
    Thus the motive of the special group $\SL_{2}$ is equal to the motive of the non-special group $\PGL_{2}$. For a non-special and connected group such as $\PGL_{2}$, it is not obvious that the motive of $\Bc \PGL_{2}$ is the inverse of the motive of the group $\PGL_{2}$ and thus we have the expected formula $\{\PGL_{2}\}^{-1} = \{\Bc \PGL_{2}\} = \frac{1}{\Lb \cdot ( \Lb^{2}-1 )}$ from the multiplicative relation for torsors. This is shown to be true by the work of \cite{Bergh}.

    \begin{thm}[\cite{Bergh} Theorem A.]\label{Thm:Bergh_PGL}
    Let $K$ be a field of characteristic not equal to 2. Then the class of the classifying stack $\{\Bc \PGL_{2}\}$ is the inverse of the class of $\PGL_{2}$ in $K_0(\mathrm{Stck}_{/K})$.
    \end{thm}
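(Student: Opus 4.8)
The plan is to exploit the exceptional isomorphism $\PGL_2 \cong \SO_3$ (valid since $\mathrm{char}(K) \neq 2$) in order to realize $\Bc\PGL_2$ as a quotient of an honest variety by the \emph{special} group $\GL_3$, for which the motivic class is multiplicative in torsors. Concretely, I would let $W \cong \Ab^6$ denote the space of ternary quadratic forms (symmetric $3 \times 3$ matrices), whose projectivization parameterizes conics, and let $W^\circ \subseteq W$ be the open locus of nondegenerate forms (nonvanishing determinant), on which $\GL_3$ acts by change of variables. Over $\overline{K}$ all nondegenerate forms are equivalent, and the stabilizer of a fixed form $q_0$ is the orthogonal group $\mathrm{O}_3 = \mathrm{O}(q_0)$; the orbit map identifies $W^\circ \cong \GL_3/\mathrm{O}_3$ as $K$-schemes (an isomorphism over $\overline{K}$ descends by faithfully flat descent), so that $[W^\circ/\GL_3] \cong \Bc\mathrm{O}_3$. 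Since the atlas $W^\circ \to \Bc\mathrm{O}_3$ is a $\GL_3$-torsor and $\GL_3$ is special, the multiplicativity from \cite[Proposition 1.1 iii)]{Ekedahl} yields $\{W^\circ\} = \{\GL_3\} \cdot \{\Bc\mathrm{O}_3\}$, reducing the problem to an explicit class computation.

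Next I would relate $\Bc\mathrm{O}_3$ back to $\Bc\PGL_2$. In $\mathrm{char}(K) \neq 2$ the central element $-I$ has determinant $-1$, giving a splitting $\mathrm{O}_3 \cong \SO_3 \times \mu_2 \cong \PGL_2 \times \mu_2$, hence $\{\Bc\mathrm{O}_3\} = \{\Bc\PGL_2\} \cdot \{\Bc\mu_2\}$. The factor $\{\Bc\mu_2\}$ is trivial: writing $\Bc\mu_2 = [\Gb_m/\Gb_m]$ for the weight-$2$ scaling action exhibits $\Gb_m \to \Bc\mu_2$ as a $\Gb_m$-torsor, so Lemma \ref{lem:Gm_quot} gives $\{\Bc\mu_2\} = \{\Gb_m\}\{\Gb_m\}^{-1} = 1$. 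Thus $\{\Bc\PGL_2\} = \{\Bc\mathrm{O}_3\} = \{W^\circ\}/\{\GL_3\}$, and it remains only to compute the two classes on the right.

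The class $\{\GL_3\} = (\Lb^3-1)(\Lb^3-\Lb)(\Lb^3-\Lb^2) = \Lb^3(\Lb-1)^3(\Lb+1)(\Lb^2+\Lb+1)$ is standard. For $\{W^\circ\}$ I would stratify $W \cong \Ab^6$ by the rank of the symmetric matrix. The rank-$0$ locus is a point; the nonzero rank-$1$ locus is the punctured affine cone over the Veronese $\Pb^2 \hookrightarrow \Pb^5$, a $\Gb_m$-torsor over $\Pb^2$ of class $(\Lb-1)(\Lb^2+\Lb+1) = \Lb^3-1$; and the rank-$2$ locus fibers (Zariski-locally trivially) over $\Pb^2$, the space of radical lines, with fiber the nondegenerate symmetric forms on the rank-$2$ quotient, of class $\Lb^3-\Lb^2$, giving $(\Lb^2+\Lb+1)(\Lb^3-\Lb^2) = \Lb^5 - \Lb^2$. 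Subtracting these from $\{W\} = \Lb^6$ leaves $\{W^\circ\} = \Lb^6 - \Lb^5 - \Lb^3 + \Lb^2 = \Lb^2(\Lb-1)^2(\Lb^2+\Lb+1)$. Dividing by $\{\GL_3\}$ and cancelling gives $\{\Bc\PGL_2\} = \frac{1}{\Lb(\Lb-1)(\Lb+1)} = \frac{1}{\Lb(\Lb^2-1)}$, which by Lemma \ref{lem:SL_Motive} is exactly $\{\PGL_2\}^{-1}$.

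The main conceptual obstacle is precisely the non-speciality of $\PGL_2$: one cannot simply invert $\{\PGL_2\}$ from the tautological torsor $\Spec K \to \Bc\PGL_2$, and indeed this is the entire content of the theorem. The device that circumvents it is the passage to the linear model $[W^\circ/\GL_3]$, trading the non-special $\PGL_2$ for the special $\GL_3$ at the cost of the harmless central $\mu_2$. After this reduction the only remaining work is the honest (but routine) stratification computation of $\{W^\circ\}$; the characteristic hypothesis $\mathrm{char}(K) \neq 2$ enters exactly where needed, namely in the diagonalizability of quadratic forms (transitivity of $\GL_3$ on $W^\circ$ over $\overline{K}$) and in the splitting $\mathrm{O}_3 \cong \PGL_2 \times \mu_2$.
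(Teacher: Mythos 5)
The paper does not actually prove this statement: it is imported verbatim as \cite[Theorem A]{Bergh}, and the only ``proof'' in the text is the citation. So the relevant comparison is with Bergh's original argument, and your proposal essentially reconstructs it: Bergh likewise identifies $\Bc\mathrm{O}_3$ with the quotient of the nondegenerate ternary quadratic forms by $\GL_3$, computes the class of the nondegenerate locus by the rank stratification, and then splits off the central $\mu_2$ (whose classifying stack has trivial class, cf.\ \cite[Proposition 2.10]{Bergh}) using $\mathrm{O}_3 \cong \SO_3 \times \mu_2 \cong \PGL_2 \times \mu_2$ in characteristic $\neq 2$. All the individual steps check out: $W^\circ \cong \GL_3/\mathrm{O}(q_0)$ follows since the orbit map $\GL_3/\mathrm{O}(q_0) \to W^\circ$ is an immersion onto the (open, dense, set-theoretically full) orbit of the smooth scheme $W^\circ$; the atlas $W^\circ \to [W^\circ/\GL_3]$ is a $\GL_3$-torsor so \cite[Proposition 1.1 iii)]{Ekedahl} applies; the rank-$1$ and rank-$2$ strata have classes $\Lb^3-1$ and $(\Lb^2+\Lb+1)(\Lb^3-\Lb^2)$ as you say; and every class you divide by is a product of $\Lb$'s and $(\Lb^m-1)$'s, hence invertible in $K_0(\mathrm{Stck}_K)$, so the cancellations are legitimate.

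The one point you should make explicit is the choice of $q_0$. Over a non-closed field the group scheme $\mathrm{O}(q_0)$ depends on the $K$-isomorphism class of $q_0$, and $\SO(q_0) \cong \PGL_2$ holds only when $q_0$ is \emph{isotropic} (split); for an anisotropic form such as $x^2+y^2+z^2$ over $\Qb$, the group $\SO(q_0)$ is $\PGL_1(D)$ for a quaternion division algebra $D$, which is a nontrivial inner form of $\PGL_2$, and the final identification $\{\Bc\mathrm{O}(q_0)\} = \{\Bc\PGL_2\}\{\Bc\mu_2\}$ would be unjustified. Since the orbit-map argument works for \emph{any} $K$-rational nondegenerate $q_0$, the fix is just to take $q_0$ to be the split form, e.g.\ $q_0 = xz - y^2$, or equivalently the determinant form on trace-zero $2\times 2$ matrices, which is exactly the form for which the adjoint representation realizes $\PGL_2 \overset{\sim}{\to} \SO(q_0)$. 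With that one sentence added, the proof is complete and correct.
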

    
    Now we turn to the computation of the motive of $\M_{n}^{\lambdavec}$ in the Grothendieck ring of stacks. The following result gives an explicit formula for this motive.
    
       \begin{thm}\label{Thm:motivequotient1}
    Let $K$ be a field let the degree $n \in \Zb_{\geq 1}$ be an odd number. For $\vec{\lambda} = (\lambda_0, \dotsc, \lambda_N) \in \mathbb{Z}_{\geq 1}^{N+1}$ let $|\vec{\lambda}|\coloneqq\sum\limits_{i=0}^{N} \lambda_i$. 
    Then we have 
    \begin{equation} \label{eqn:Mlambdamotiveeqn}
        \left\{\M_{n}^{\lambdavec}\right\} = \frac{\left\{\Hom_n(\Pb^1,\Pcv)\right\} }{\left\{\PGL_2\right\}}
    \end{equation}
    which has the explicit formula
    \[
       \left\{\M_{n}^{\lambdavec}\right\} =
        \begin{cases}
            \Lb^{|\vec{\lambda}|n - N - 1} \cdot \left( \Lb^{N-1} + \Lb^{N-3} + \dotsc + \Lb^{2} + 1 \right) \cdot \left( \Lb^{N-1} + \dotsc + 1 \right), & \mathrm{~if~N~odd.} \\\\
            \Lb^{|\vec{\lambda}|n - N - 1} \cdot \left( \Lb^{N} + \dotsc + 1 \right) \cdot \left( \Lb^{N-2} + \Lb^{N-4} + \dotsc + \Lb^{2} + 1 \right),  & \mathrm{~if~N~even.}
        \end{cases}
    \]
    This implies that for the weighted point counts over $\Fb_q$ we have
    \[
        \#_q\left(\M_{n}^{\lambdavec}\right) =
        \begin{cases}
            q^{|\vec{\lambda}|n - N - 1} \cdot \left( q^{N-1} + q^{N-3} + \dotsc + q^{2} + 1 \right) \cdot \left( q^{N-1} + \dotsc + 1 \right), & \mathrm{~if~N~odd.} \\\\
            q^{|\vec{\lambda}|n - N - 1} \cdot \left( q^{N} + \dotsc + 1 \right) \cdot \left( q^{N-2} + q^{N-4} + \dotsc + q^{2} + 1 \right),  & \mathrm{~if~N~even.}
        \end{cases}
    \]
    \end{thm}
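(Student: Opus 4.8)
The plan is to reduce the computation to a quotient by the \emph{special} group $\GL_2$, for which motives of torsors behave multiplicatively, exploiting crucially that $n$ is odd. Recall from Section~\ref{sec:Rational_Curves} that $\Hom_n(\Pb^1,\Pcv)$ is the open locus $U' \subseteq U_\Lambdavec \setminus \{0\}$ of tuples $\vec u$ without common zero, that $\Pov = [(U_\Lambdavec \setminus \{0\})/\Gb_m]$ for the weighted $\Gb_m$-action of weights $\Lambdavec$, and that $\PGL_2$ acts on $U_\Lambdavec$ through a lift to a $\GL_2$-action (acting on the homogeneous coordinates $X,Y$ of $\Pb^1$, hence linearly on each $H^0(\Pb^1,\Oc_{\Pb^1}(n\lambda_i))$). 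This $\GL_2$-action commutes with the weighted $\Gb_m$-action, and its central torus $Z \cong \Gb_m$ acts on $u_i$ with weight $n\lambda_i$, i.e.\ as the $n$-th power of the weighted $\Gb_m$.

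First I would assemble these two commuting actions into a single action of
\[
E \coloneqq (\GL_2 \times \Gb_m)/K, \qquad K = \{(zI, z^{-n}) : z \in \Gb_m\}\,,
\]
which acts on $U_\Lambdavec$ with $K$ acting trivially (since $z^{n\lambda_i}\cdot (z^{-n})^{\lambda_i}=1$). As $\Gb_m \hookrightarrow E$ (the weighted torus) is central with quotient $E/\Gb_m \cong \PGL_2$, the standard identity $[X/E] = [[X/\Gb_m]/(E/\Gb_m)]$ for quotients by a central subgroup gives $[U'/E] = [\Hom_n(\Pb^1,\Pcv)/\PGL_2] = \M_n^{\lambdavec}$. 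The key point --- and the only place oddness of $n$ enters --- is that for odd $n$ the group $E$ is isomorphic to $\GL_2$: the homomorphism $\Phi\colon \GL_2 \times \Gb_m \to \GL_2$, $\Phi(g,w) = w\,(\det g)^{(n-1)/2}\, g$, is well defined because $(n-1)/2 \in \Zb$, has kernel exactly $K$, and is surjective by a dimension count, hence descends to an isomorphism $E \xrightarrow{\sim} \GL_2$. (For even $n$ this fails, which is the source of the restriction in the statement.)

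Since $\GL_2$ is special, the $E \cong \GL_2$-torsor $U' \to \M_n^{\lambdavec}$ satisfies $\{U'\} = \{\GL_2\}\cdot\{\M_n^{\lambdavec}\}$. Combining this with Lemma~\ref{lem:Gm_quot} applied to the $\Gb_m$-torsor $U' \to \Hom_n(\Pb^1,\Pcv)$, namely $\{\Hom_n(\Pb^1,\Pcv)\} = \{U'\}/(\Lb-1)$, and with $\{\GL_2\} = (\Lb-1)\{\PGL_2\}$ (from the $\Gb_m$-torsor $\GL_2 \to \PGL_2$), yields the first displayed formula
\[
\{\M_n^{\lambdavec}\} = \frac{\{U'\}}{\{\GL_2\}} = \frac{(\Lb-1)\{\Hom_n(\Pb^1,\Pcv)\}}{(\Lb-1)\{\PGL_2\}} = \frac{\{\Hom_n(\Pb^1,\Pcv)\}}{\{\PGL_2\}}\,.
\]
For the explicit shape I would substitute $\{\PGL_2\} = \{\SL_2\} = \Lb(\Lb^2-1)$ from Lemma~\ref{lem:SL_Motive} and the value of $\{\Hom_n(\Pb^1,\Pcv)\}$. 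The latter is the motive of the no-common-zero locus $U'$ (divided by $\Lb-1$), which by the methods of \cite{HP,PS} equals $\Lb^{|\lambdavec|n-N}(\Lb^{N+1}-1)(\Lb^{N-1}+\dotsb+1)$; if one prefers a self-contained derivation, this can be obtained by stratifying the complement $U_\Lambdavec \setminus U'$ according to the scheme of common zeros via a resultant / inclusion--exclusion argument. Carrying out the division by $\Lb(\Lb^2-1)=\Lb(\Lb-1)(\Lb+1)$ and distributing the factor $\Lb^2-1$ into either $\Lb^{N+1}-1$ or $\Lb^N-1$ according to the parity of $N$ (so as to keep both quotients polynomial) then produces the two cases of the stated formula.

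Finally, applying the ring homomorphism $\#_q\colon K_0(\mathrm{Stck}_{\Fb_q}) \to \Qb$ with $\#_q(\Lb)=q$ to the motivic identity immediately gives the asserted weighted point counts, since all classes involved are of finite type with affine stabilizers so that $\#_q$ is defined. The main obstacle is the first step: because $\PGL_2$ is \emph{not} special, the naive relation $\{\Hom_n(\Pb^1,\Pcv)\} = \{\PGL_2\}\{\M_n^{\lambdavec}\}$ is not formal (and does not follow merely from Bergh's Theorem~\ref{Thm:Bergh_PGL}, which only governs the universal torsor); the entire content is the reorganization of the quotient as a $\GL_2$-quotient, which hinges on the isomorphism $E \cong \GL_2$ available precisely for odd $n$. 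A secondary point requiring care is that all of the above must be carried out for actions on the \emph{stack} $\Pov$ in the sense of \cite{Romagny}, so one should verify that the formation of $E$ and the identity $[U'/E] = \M_n^{\lambdavec}$ are compatible with that formalism.
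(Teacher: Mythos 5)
Your proposal is correct and follows essentially the same route as the paper: the paper directly defines the twisted $\GL_2$-action on $U_\Lambdavec$ by tensoring the $i$-th summand with $\mathsf{det}^{-(n-1)\lambda_i/2}$ (possible precisely because $n$ is odd), which is exactly the action obtained by transporting your $E$-action along your isomorphism $\Phi\colon \GL_2 \xrightarrow{\sim} E$, and then concludes via the same chain of torsor relations and the same formula for $\left\{\Hom_n(\Pb^1,\Pcv)\right\}$. Your packaging via the extension $E=(\GL_2\times\Gb_m)/K$ makes the compatibility of the induced $\PGL_2$-action automatic, which the paper instead checks by a short explicit computation, but the underlying argument is identical.
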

    
    When trying to prove the result above, we encounter a problem: since $\PGL_2$ is not a special group, the multiplicativity relation for the universal $\PGL_2$-torsor over the classifying stack $\{\Bc \PGL_{2}\}$ need not imply the same relation for arbitrary $\PGL_2$-torsors and thus it is not a priori clear that the motive of the quotient stack $\M_{n}^{\lambdavec}$ equals the motive of the pre-quotient $\Hom_n(\Pb^1,\Pcv)$ divided by the motive of $\PGL_2$. To overcome this difficulty, we show that in many cases, there exists an alternative representation of $\M_{n}^{\lambdavec}$ as a quotient stack for the special group $\GL_2$. 

    \medskip
    
    \par To set the stage for this, recall the identification
    \[
    U_\Lambdavec = \bigoplus_{i=0}^N H^0\left(\Pb^1, \mathcal{O}_{\Pb^1}(n \lambda_i)\right)
    \]
    from above. The action \eqref{eqn:matrixactioncoordinates} of $\PGL_2$ on the homogeneous coordinates $X,Y$ is induced by an action of $\GL_2$ on the spaces $H^0\left(\Pb^1, \mathcal{O}_{\Pb^1}(n \lambda_i)\right)$, which can be interpreted as the spaces of homogeneous polynomials of degrees $n \lambda_i$. 
    
    Now it is almost, but not quite, true that we can see $\M_{n}^{\lambdavec}$ as an open substack of $[U_\Lambda / \GL_2]$. Indeed, 
    consider the diagonal subgroup $T=\Gb_m \subseteq \GL_2$ inside $\GL_2$. Then from the action \eqref{eqn:matrixactioncoordinates} on the coordinates \eqref{eqn:coordinaterepr} we see that $T$ acts with weight vector\footnote{~Formally, since our action was defined by $f \mapsto f \circ \varphi^{-1}$ to obtain a left action, we would get weight $-n \Lambdavec$ in standard conventions. To avoid unnecessary clutter in the notation, we suppress these sign issues, which can anyway be resolved by applying the automorphism $t \mapsto t^{-1}$ of the diagonal torus $T$.} $n \Lambdavec$ on $U_\Lambdavec$. For $n=1$ we thus have 
    \[
    \left[(U_\Lambdavec \setminus \{0\}) / T\right] = \Pov\,,
    \]
    containing $\Hom_n(\Pb^1,\Pcv)$ as an open substack by \eqref{eqn:HomsubsetPLambda}. Moreover, the induced action of $\PGL_2 = \GL_2 / T$ on $\Pov$ precisely restricts on $\Hom_n(\Pb^1,\Pcv)$ to the action used to define $\M_{n}^{\lambdavec}$. Therefore, by \cite[Remark 2.4]{Romagny} we have
    \begin{align} 
        \M_{n}^{\lambdavec} &= \left[\Hom_n(\Pb^1,\Pcv) / \PGL_2\right] \nonumber \\ &\subseteq \left[\Pov / \PGL_2\right] = \left[[(U_\Lambdavec \setminus \{0\}) / T] / (\GL_2 / T)\right] = \left[(U_\Lambdavec \setminus \{0\}) / \GL_2\right]\,.\label{eqn:quotientswap}
    \end{align}
    Thus at least for $n=1$ we can always see $\M_{n}^{\lambdavec}$ as an open substack of a $\GL_2$-quotient stack. 
    What prevents us from pursuing this strategy for arbitrary degree $n$ and weight $\lambdavec = (\lambda_0, \dotsc, \lambda_N)$ is the fact that in general, the torus $T$ does not act with the correct weights on $U_\Lambdavec$ to produce the quotient $\Pov$. However, in many cases, this can be fixed by modifying the original $\GL_2$-action on $U_\Lambdavec$.
    Indeed, for the natural one-dimensional representation
    \[
    \mathsf{det} : \GL_2 \to \Gb_m
    \]
    given by the determinant, the restriction to $T$ gives the representation $T \to \Gb_m, t \mapsto t^2$ of weight $2$.

    Assume that $n$ is odd. Then we consider the action of $\GL_2$ on $U_\Lambdavec$ obtained from the standard action by tensoring with $\mathsf{det}^{-(n-1) \lambda_i/2}$ on the summand $H^0\left(\Pb^1, \mathcal{O}_{\Pb^1}(n \lambda_i)\right)$. The diagonal torus $T$ acts with weight $\Lambdavec = n \Lambdavec - 2 \cdot (n-1)/2 \Lambdavec$ as desired and it is still true that the induced action of $\PGL_2$ on $\Pov$ is the classical one. Indeed, the new action of $[A] \in \PGL_2$ on $[(u_i(X,Y))_{i=0}^N] \in \Pov$ is given by
    \begin{align*}
        \left[\left((\mathsf{det}(A)^{-(n-1)/2})^{\lambda_i} \cdot u_i(A^{-1} \cdot (X,Y))\right)_{i=0}^N\right] = \left[\left(u_i(A^{-1} \cdot (X,Y))\right)_{i=0}^N\right]  \in \Pov\,,
    \end{align*}
    which agrees with the old action. For this proof, \textit{it is crucial that $n$ is odd}. 

    Note that when all $\lambda_i$ are even, we \emph{can} define the modified action even for $n$ divisible by $2$, since $(n-1) \lambda_i/2$ is an integer. However, in this case it is not clear that the induced action of $\PGL_2$ coincides with the original action. 

    Returning to the case of $n \in \N$ odd, we can then repeat the proof explained in \eqref{eqn:quotientswap} for $n=1$ and prove both Main Theorem~\ref{Thm:WeierstrassMotive} and Theorem~\ref{Thm:motivequotient1}:



    \begin{proof}[Proof of Main Theorem~\ref{Thm:WeierstrassMotive} and Theorem~\ref{Thm:motivequotient1}]
    For the quotient map $U_\Lambdavec \setminus \{0\} \to \Pov$, let $~\widehat U \subset U_\Lambdavec \setminus \{0\}$ be the preimage of $\Hom_n(\Pb^1,\Pcv) \subseteq \Pov$. Then we claim that there exists an isomorphism \\
    \begin{equation} \label{eqn:Mlambdaisom}
        \M_{n}^{\lambdavec} \cong \left[\widehat U / \GL_2\right]
    \end{equation}
    Indeed, this follows from the construction of the modified group action together with \cite[Remark 2.4]{Romagny} as explained in \eqref{eqn:quotientswap}. For the equality \eqref{eqn:Mlambdamotiveeqn} note first that $\widehat U \to \Hom_n(\Pb^1,\Pcv)$ is a $\Gb_m$-torsor and thus, since $\Gb_m$ is special, we have
    \[
    \left\{\widehat U\right\} = \left\{\Hom_n(\Pb^1,\Pcv)\right\} \cdot \left\{\Gb_m\right\}\,.
    \]
    Then with the isomorphism \eqref{eqn:Mlambdaisom} we conclude
    \begin{align*}
    \left\{\M_{n}^{\lambdavec}\right\} &= \frac{\left\{\widehat U\right\}}{\left\{\GL_2\right\}} = \frac{\left\{\Hom_n(\Pb^1,\Pcv)\right\} \cdot \left\{\Gb_m\right\}}{\left\{\GL_2\right\}} = \frac{\left\{\Hom_n(\Pb^1,\Pcv)\right\}}{\frac{\left\{\GL_2\right\}}{\left\{\Gb_m\right\}}} \\ &= \frac{\left\{\Hom_n(\Pb^1,\Pcv)\right\} }{\left\{\PGL_2\right\}}\,,
    \end{align*}
    by using that $\GL_2$ is special for the first equality. 

    Since the motive $\left\{\Hom_n(\Pb^1,\Pcv)\right\} = \left(\sum\limits_{i=0}^{N} \Lb^{i}\right) \cdot \left(\Lb^{|\vec{\lambda}|n}-\Lb^{|\vec{\lambda}|n - N}\right)$ by \cite[Proposition 4.5.]{HP2}, we have
    \begin{align*}
        \left\{\M_{n}^{\lambdavec}\right\} &= \frac{\left\{\Hom_n(\Pb^1,\Pcv)\right\}}{\left\{\PGL_2\right\}}\\
        &=\frac{\left(\sum\limits_{i=0}^{N} \Lb^{i}\right) \cdot \left(\Lb^{|\vec{\lambda}|n}-\Lb^{|\vec{\lambda}|n - N}\right)}{\Lb \cdot ( \Lb^{2}-1 )} \\
        &=\frac{\left( \Lb^{N} + \dotsc + 1 \right) \cdot \Lb^{|\vec{\lambda}|n - N} \cdot \left( \Lb^{N} - 1 \right)}{\Lb \cdot ( \Lb^{2}-1 )} \\
        &=\frac{\Lb^{|\vec{\lambda}|n - N - 1} \cdot \left( \Lb^{N} + \dotsc + 1 \right) \cdot \left( \Lb^{N-1} + \dotsc + 1 \right)}{ \Lb + 1 } \\
        &=\begin{cases}
            \Lb^{|\vec{\lambda}|n - N - 1} \cdot \left( \Lb^{N-1} + \Lb^{N-3} + \dotsc + \Lb^{2} + 1 \right) \cdot \left( \Lb^{N-1} + \dotsc + 1 \right) &\mathrm{,~if~N~odd.} \\
            \Lb^{|\vec{\lambda}|n - N - 1} \cdot \left( \Lb^{N} + \dotsc + 1 \right) \cdot \left( \Lb^{N-2} + \Lb^{N-4} + \dotsc + \Lb^{2} + 1 \right)   &\mathrm{,~if~N~even.} 
        \end{cases}
    \end{align*}

    We note that $\left\{\Pov\right\} = \sum\limits_{i=0}^{|\vec{\lambda}|n+N} \Lb^{i}$ and by a similar argument as above.

    \begin{align*}
        \left\{ \left[\Pov / \PGL_2\right] \right\} &= \frac{\left\{\Pov\right\}}{\left\{\PGL_2\right\}}\\
        &=\frac{\Lb^{|\vec{\lambda}|n+N} + \Lb^{|\vec{\lambda}|n+N-1} + \dotsc + \Lb + 1}{\Lb \cdot ( \Lb^{2}-1 )}
    \end{align*}
    The specialization of these formulas to $\lambdavec=(4,6)$ combined with Main Theorem \ref{Thm:Weierstrass_Stack_Quotient} also concludes the proof of Main Theorem~\ref{Thm:WeierstrassMotive}.
    \end{proof}

    Below we compute some low-dimensional examples when $N = 1,2,3$.
    \begin{align*}
    \left\{\M_{n}^{(\lambda_0, \lambda_1)}\right\} &= \frac{ \left\{\Hom_n(\Pb^1,\Pc(\lambda_0, \lambda_1))\right\}}{ \left\{\PGL_2\right\}} &&= \Lb^{|\vec{\lambda}|n - 2} \\
    \left\{\M_{n}^{(\lambda_0, \lambda_1, \lambda_2)}\right\} &= \frac{ \left\{\Hom_n(\Pb^1,\Pc(\lambda_0, \lambda_1, \lambda_2))\right\}}{ \left\{\PGL_2\right\}} &&= \Lb^{|\vec{\lambda}|n - 3} \cdot \left( \Lb^{2} + \Lb + 1 \right) \\
    \left\{\M_{n}^{(\lambda_0, \lambda_1, \lambda_2, \lambda_3)}\right\} &= \frac{ \left\{\Hom_n(\Pb^1,\Pc(\lambda_0, \lambda_1, \lambda_2, \lambda_3))\right\}}{ \left\{\PGL_2\right\}} &&= \Lb^{|\vec{\lambda}|n - 4} \cdot \left( \Lb^{4} + \Lb^{3} + 2 \Lb^{2} + \Lb + 1 \right) 
    \end{align*}


    \medskip

    Since $\SL_2$ is special, it is immediate that
    \[
    \left\{ [\Hom_n(\Pb^1,\Pcv) / \SL_2] \right\} = \frac{\{\Hom_n(\Pb^1,\Pcv)\}}{\{\SL_2\}}\,,
    \]
    and since $\{\SL_2\} = \{\PGL_2\}$ by Lemma~\ref{lem:SL_Motive}, this class agrees with the class of $\M_{n}^{\lambdavec}$ by Theorem~\ref{Thm:motivequotient1}. The following question is thus quite natural:
    \begin{ques}\label{ques:Motive_Even}
    Do we have equalities of motives or weighted point counts
    $$\left\{ [\Hom_n(\Pb^1,\Pcv) / \PGL_2] \right\} = \left\{ [\Hom_n(\Pb^1,\Pcv) / \SL_2] \right\}$$
    $$\#_q\left( [\Hom_n(\Pb^1,\Pcv) / \PGL_2] \right) = \#_q\left( [\Hom_n(\Pb^1,\Pcv) / \SL_2] \right)$$ 
    even when the degree $n$ is divisible by two? 
    \end{ques}

    Finally, as discussed in the introduction, the stack $\M_{n}^{\lambdavec}$ for $\lambdavec=(1,3)$ potentially has a modular interpretation as the moduli stack of stable Weierstrass fibrations with $[\Gamma_1(3)]$-level structure by an isomorphism $\Me[\Gamma_1(3)] \iso \Pc(1,3)$ over $\Spec(\Zb[1/ 3])$ as in \cite[Proposition 4.5]{HMe}. Below we collect the nice properties of this stack and its smooth modular compactification via GIT, as well as its motive induced weighted point count over $\Fb_q$ obtained by combining the theorems of the last two sections.

    \begin{thm} \label{Thm:Level_SP}
    Fix an odd degree $n$ and a base field $K$ with $\mathrm{char}(K) \neq 3$ and $\nmid n$. Then the moduli stack $\WeiB_{n}^{(1,3)} \coloneqq \M_{n}^{(1,3), s} = \M_{n}^{(1,3), ss}$ is a smooth, proper and irreducible Deligne--Mumford stack of dimension $4n-2$ which admits a projective coarse moduli space. It is also tame for $\mathrm{char}(K)=0$ or $>3n$. 
    Moreover, from the inclusions
    \[
    \M_{n}^{(1,3)} \subsetneq \WeiB_{n}^{(1,3)} \subsetneq \left[\Pov / \PGL_2\right]\,
    \]
    we then obtain the following bounds
    \begin{align*}
    q^{4n - 2} = \#_q\left(\M_{n}^{(1,3)}\right) \leq \#_q\left(\WeiB_{n}^{(1,3)}\right) & \leq \#_q\left(\left[\Pov / \PGL_2\right]\right) = \frac{q^{4n+2}-1}{q(q-1)(q^2-1)} \\\\
    & = q^{4n-2} + q^{4n-3} + 2 q^{4n-4} + 2 q^{4n-5} + O(q^{4n-6}) \,.    
    \end{align*}
    \end{thm}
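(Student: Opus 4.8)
The plan is to obtain Theorem~\ref{Thm:Level_SP} as the specialization of Theorem~\ref{Thm:Moduli_GIT} to the weight vector $\lambdavec = (1,3)$, supplemented by the motivic computation of Theorem~\ref{Thm:motivequotient1} for the point-count bounds. First I would record that since the weights $1,3$ and the degree $n$ are all odd, Proposition~\ref{Prop:stablesemistablecrit} gives that stable and semistable points coincide, so that $\WeiB_n^{(1,3)} = \M_n^{(1,3),s} = \M_n^{(1,3),ss}$ is well defined. The smoothness and irreducibility are then immediate from Theorem~\ref{Thm:Moduli_GIT}a), the properness from part~d) (whose hypothesis that $n$ and all $\lambda_i$ be odd is exactly met here), and the existence of a projective coarse moduli space $M_n^{(1,3),ss}$ from part~c). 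The dimension is computed directly: $\Hom_n(\Pb^1,\Pc(1,3))$ is open in $\Pov$ of dimension $\sum_i(n\lambda_i+1)-1 = (n+1)+(3n+1)-1 = 4n+1$, and dividing by the $3$-dimensional group $\PGL_2$ yields $\dim \WeiB_n^{(1,3)} = 4n-2$. Finally, since $\mathrm{lcm}(1,3)\cdot n = 3n$, the tameness for $\mathrm{char}(K)=0$ or $>3n$ is precisely the content of Theorem~\ref{Thm:Moduli_GIT}b) together with \eqref{eqn:charKintro}.

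The one assertion not handed to us verbatim by Theorem~\ref{Thm:Moduli_GIT} is the Deligne--Mumford property under the \emph{weaker} hypothesis $\mathrm{char}(K)\neq 3$ and $\nmid n$ (rather than $\mathrm{char}(K)>3n$). To get this I would return to Theorem~\ref{Thm:QuotisDM}, by which it suffices to show that $\PGL_2$ acts on $\Pov^{ss}=\Pov^s$ with finite and reduced stabilizers at all geometric points. For a point $[\vec u]$ whose underlying rational map $\varphi_{[\vec u]}$ is finite, the first part of Proposition~\ref{Prop:finredstabilizercrit} already yields finiteness and reducedness under exactly the hypothesis $\mathrm{char}(K)\nmid\lambda_i,n$, i.e.\ $\mathrm{char}(K)\neq 3$ and $\nmid n$. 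The remaining geometric points of $\Pov^s$ are the contracted (non-finite) maps classified by Lemma~\ref{Lem:constmap}; for these the stabilizer of $[\vec u]$ equals the $\PGL_2$-stabilizer of the base divisor $Z=V(\vec u)\subseteq \Pb^1$, and the task is to show that this $\Stab(Z)$ is finite and unramified under the weakened characteristic hypothesis.

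I expect this last point to be the main obstacle. The bounds coming from Corollary~\ref{Cor:finredtamecrit} and Proposition~\ref{Pro:stablesemistablestackproperties} are phrased under the stronger hypothesis \eqref{eqn:charKintro}, and a careful reducedness analysis of $\Stab(Z)$ for fat-point divisors $Z$ (via the appendix, Proposition~\ref{Pro:stabilizerfatpoints}) genuinely involves the multiplicities of the \emph{individual} base points, not merely $n$ and the $\lambda_i$. I would therefore isolate the stable contracted configurations for $(1,3)$: by Lemma~\ref{Lem:constmap} these have $u_0=a_0U,\ u_1=a_1U^3$ with $\deg U=n$ and stability forcing every multiplicity of $U$ to be $<n/2$ (with the analogous degree-$3n$ description in the degenerate cases $a_0=0$ or $a_1=0$), and verify by a tangent-space computation that the corresponding $\Stab(Z)$ is finite and unramified. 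This is the crux where one must confirm that the weaker characteristic hypothesis really does suffice.

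The point-count statements are then routine. From Theorem~\ref{Thm:motivequotient1} specialized to $N=1$ we read off $\{\M_n^{(1,3)}\} = \Lb^{|\lambdavec|n-2}=\Lb^{4n-2}$, hence $\#_q(\M_n^{(1,3)})=q^{4n-2}$, while the same proof supplies $\#_q([\Pov/\PGL_2]) = \frac{q^{4n+2}-1}{q(q-1)(q^2-1)}$ from the general formula with $|\lambdavec|n+N+1=4n+2$. Because the weighted point count is additive on locally closed decompositions and non-negative on every stratum, it is monotone along open immersions, so the chain $\M_n^{(1,3)}\subsetneq \WeiB_n^{(1,3)}\subsetneq \left[\Pov/\PGL_2\right]$ immediately yields the displayed inequalities. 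The final asymptotic expansion follows by writing $q(q-1)(q^2-1)=q^4-q^3-q^2+q$ and expanding $(1-q^{-1}-q^{-2}+q^{-3})^{-1}=1+q^{-1}+2q^{-2}+2q^{-3}+O(q^{-4})$ as a geometric series, giving $q^{4n-2}+q^{4n-3}+2q^{4n-4}+2q^{4n-5}+O(q^{4n-6})$.
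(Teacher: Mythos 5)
Your overall route is exactly the paper's: the geometric assertions are read off from Theorem~\ref{Thm:Moduli_GIT} (parts a), c), d) and the tameness bound $3n=\mathrm{lcm}(1,3)\cdot n$ from part b)), and the point-count chain from Theorem~\ref{Thm:motivequotient1} specialized to $N=1$, $|\lambdavec|=4$. All of those steps in your write-up are correct, including the dimension count $4n+1-3=4n-2$ and the expansion of $\bigl(1-q^{-1}-q^{-2}+q^{-3}\bigr)^{-1}=1+q^{-1}+2q^{-2}+2q^{-3}+O(q^{-4})$. The paper's own proof is essentially just these two citations.

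The step you flag as ``the crux'' --- the Deligne--Mumford property of $\WeiB_n^{(1,3)}=\M_n^{(1,3),s}$ under the weak hypothesis $\mathrm{char}(K)\neq 3$, $\nmid n$ rather than $\mathrm{char}(K)=0$ or $>3n$ --- is a genuine gap, and you are right not to treat it as settled: the footnote to Theorem~\ref{Thm:Moduli_GIT}b) weakens the hypothesis only for $\M_n^{\lambdavec}$, while Proposition~\ref{Pro:stablesemistablestackproperties} (which handles the stable locus, including the contracted configurations) really uses the bound \eqref{eqn:charKintro}. Moreover, the verification you propose for the contracted points would actually \emph{fail} in small characteristic. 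Take $\mathrm{char}(K)=2$, $n=5$, $U=X^2Y^2(X+Y)$ and $[\vec u]=[U:U^3]$. Each zero of $U$ has multiplicity $\leq 2<5/2$, so $[\vec u]$ is stable by Proposition~\ref{Prop:stablesemistablecrit}; but its $\PGL_2$-stabilizer equals $\Stab(V(U))$, and since only one point of $V(U)$ has multiplicity prime to $2$, Proposition~\ref{Pro:stabilizerfatpoints} says this group scheme is non-reduced (explicitly, matrices with $b^2=c^2=0$ and $a+c=b+d$ satisfy $U\circ g=a^2d^2(a+c)\,U$, giving a two-dimensional tangent space at the identity). By Lemma~\ref{Lem:stabilizerextension} the corresponding point of $\M_5^{(1,3),s}$ then has non-reduced automorphism group. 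So the Deligne--Mumford claim under the weak characteristic hypothesis cannot be proved as stated; one either needs the stronger bound on $\mathrm{char}(K)$ from \eqref{eqn:charKintro}, or must restrict attention to the open substack $\M_n^{(1,3)}$ (where Proposition~\ref{Prop:finredstabilizercrit} does apply under the weak hypothesis). Your caution here is better justified than the paper's own one-line proof, which does not address this point at all.
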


    \begin{proof}
    For an odd degree $n$, as all the modular weights $\lambdavec=(1,3)$ are odd numbers, we naturally acquire the geometric properties of $\WeiB_{n}^{(1,3)}$ as a consequence of Theorem \ref{Thm:Moduli_GIT}. The bounds naturally follow the specialization of the formulas in Theorem \ref{Thm:motivequotient1} to $\lambdavec=(1,3)$.
    \end{proof}

    \section{Moduli stacks of Weierstrass fibrations over the projective line} \label{Sect:Weierstrass} 

    \subsection{Fibrations over the parameterized projective line} \label{Sect:Fibparametrized}
    Here we define the fine moduli stack $\WeierP_{n}$ of Weierstrass fibrations over parameterized $\Pb^1$ with discriminant degree $12n$ and a section and show that its components are isomorphic to open substacks of a weighted projective stack $\Pov$. Our treatment follows \cite{Miranda, Seiler}, see also \cite{HL} for related discussions. 
    The essential difference to previous accounts is that instead of working on the level of moduli functors and coarse moduli spaces, we provide a modern description in the language of algebraic stacks, in particular taking into account the automorphism groups of Weierstrass fibrations. 
    Throughout the discussion, we work over a field $K$ of characteristic $\mathrm{char}(K) = 0$ or $\neq 2,3$.
    
    We recall the definition of a Weierstrass fibration which originally goes back to the classical work of Artin and Swinnerton-Dyer \cite{ASD}. We present it here in the special case where the base is the smooth projective line: a \emph{Weierstrass fibration} over $\Pb^1$ is a pair $(f: X \to \Pb^1, s:\Pb^1 \to X)$ of
    \begin{itemize}
        \item  a flat, proper map $f$ from a reduced, irreducible scheme $X$ to $\Pb^1$, such that every geometric fibre is either an elliptic curve, a rational curve with a node or a rational curve with a cusp, and such that the generic fibre is smooth,
        \item a section $s$ of $f$ not passing through the singular points of the fibres of $f$.
    \end{itemize}
    Recall that the fibration is called \emph{minimal} if it is the Weierstrass model of a smooth minimal elliptic surface over $\Pb^1$ with a section (see \cite[Section III.3]{Miranda2} for details). Moreover, we say that the fibration is \emph{stable} if all geometric fibres $X_t$ for $t \in \Pb^1$ are stable curves (i.e., have at worst nodal singularities).

    \par Now we generalize this definition to families, to define the fine moduli stack of Weierstrass fibrations over a parameterized $\Pb^1$.

    \begin{defn} \label{Def:WeierP_Fixed}
        Given a base scheme $S$, a family of Weierstrass fibrations to $\Pb^1$ over $S$ is specified by the data
        \begin{equation} \label{eqn:WeierPfamily}
        \mathcal{X} \xrightarrow{f} S \times \Pb^1 \to S,\ S \times \Pb^1 \xrightarrow{s} \mathcal{X}
        \end{equation}
        such that 
        \begin{itemize}
            \item $f$ is a flat, proper map with section $s$, 
            \item the fibres $(\mathcal{X}_t \xrightarrow{f} \Pb^1, \Pb^1 \xrightarrow{s} \mathcal{X}_t)$ over geometric points $t$ in $S$ are Weierstrass fibrations.
        \end{itemize}
        Given a morphism $T \to S$, we have a natural pullback fibration
        \[
        \mathcal{X} \times_S T = \mathcal{X}_T \xrightarrow{f_T} T \times \Pb^1 \to T, T \times \Pb^1 \xrightarrow{s_T} \mathcal{X}_T\,.
        \]
        We say that two fibrations $\mathcal{X}, \mathcal{X}'$ over $S$ are isomorphic if there exists an isomorphism $\varphi: \mathcal{X} \to \mathcal{X}'$ making the following diagram commute:
        \begin{equation} \label{eqn:WeierPdiagram}
            \begin{tikzcd}
            \mathcal{X} \arrow[d,"f"] \arrow[r,"\varphi"] & \mathcal{X}' \arrow[d,"f'"]\\
            S\times \Pb^1 \arrow[r,"\mathrm{id}_{S \times \Pb^1}"] \arrow[u,bend left,"s"] & S\times \Pb^1\arrow[u,bend left,"s'"]
            \end{tikzcd}
        \end{equation}
        Denote by $\WeierP$ the moduli stack
        whose objects over a scheme $S$ are the families of Weierstrass fibrations to $\Pb^1$ over $S$ as defined above. The morphisms over $T \to S$ are given by pullback diagrams as defined above.
    \end{defn}
    We emphasize that in our definition above, the base of the Weierstrass fibration is a parameterized $\Pb^1$, and the isomorphisms \eqref{eqn:WeierPdiagram} are required to restrict to the identity on this $\Pb^1$. However, in preparation of the discussion below we remark that the action of the group $\PGL_2$ on $\Pb^1$ induces an action (in the sense of \cite{Romagny}) of $\PGL_2$ on $\WeierP$, where a section $g : S \to \PGL_2$ sends the family \eqref{eqn:WeierPfamily} to 
    \[
    \mathcal{X}  \xrightarrow{(\mathrm{id}_S \times g) \circ f} S \times \Pb^1 \to S,\ S \times \Pb^1 \xrightarrow{s \circ (\mathrm{id}_S \times g^{-1})} \mathcal{X}\,.
    \]
    
    

    Given a Weierstrass fibration $f:X \to \Pb^1$, we recall the \emph{fundamental line bundle} (see \cite[Definition II.4.1]{Miranda2})
    \begin{equation} \label{eqn:fundlinebundpoint}
        L = \left(R^1f_* \mathcal{O}_X \right)^\vee \in \mathrm{Pic}(\Pb^1)\,.
    \end{equation}
    Let $n = \mathrm{deg}(L)$, then of course there exists an isomorphism $L \cong \mathcal{O}_{\Pb^1}(n)$. 
    
    The fundamental line bundle generalizes to families $f : \mathcal{X} \to S \times \Pb^1$. Indeed, we can set
    \begin{equation} \label{eqn:fundlinebundfam}
        \mathcal{L} = \left(R^1f_* \mathcal{O}_X \right)^\vee \in \mathrm{Pic}(S \times \Pb^1)\,.
    \end{equation}
    For $S$ connected we have $\mathrm{Pic}(S \times \Pb^1) = \mathrm{Pic}(S) \times \mathrm{Pic}(\Pb^1)$ (see \cite[Exercise 28.1.K]{Vakil}), so there exists $n$ such that
    \[
    \mathcal{L} = \pi_S^*(\mathcal{L}_0) \otimes \mathcal{O}_{\Pb^1}(n)\,.
    \]
    In particular this shows that the number $n$ is an invariant of the connected components of the stack $\WeierP$, so this stack decomposes into a disjoint union of open and closed substacks $\WeierP_n$ according to this degree.
    
    Fixing $n \geq 1$ we claim that there is a natural morphism
    \begin{equation} \label{eqn:Phimapdef}
        \Phi : \WeierP_n \to \Pov
    \end{equation}
    where $\lambda=(4,6)$ and 
    \[
    \Lambdavec = (\underbrace{4, \ldots, 4}_{4n+1 \text{ times}}, \underbrace{6, \ldots, 6}_{6n+1 \text{ times}} )\in \mathbb{Z}^{10n+2} \,.
    \]
    To construct $\Phi$, we need to repeat the arguments presented in \cite[Section II.5]{Miranda2}) in families. These arguments go through verbatim, replacing the fixed curve $C = \Pb^1$ with the base $S \times \Pb^1$ to show the following: given the Weierstrass fibration \eqref{eqn:WeierPfamily} and forming the line bundle $\mathcal{L}$ from \eqref{eqn:fundlinebundfam} there exist so-called \emph{Weierstrass data} $(A,B)$ given by
    \begin{equation} \label{eqn:ABdef}
        A \in H^0(S \times \Pb^1, \mathcal{L}^{\otimes 4}),\ B \in H^0(S \times \Pb^1, \mathcal{L}^{\otimes 6})\,,
    \end{equation}
    unique up to the action of $\lambda \in H^0(S \times \Pb^1, \mathcal{O})^{*} =  H^0(S, \mathcal{O})^{*}$ defined by $$\lambda \cdot (A,B) = (\lambda^4 A, \lambda^6 B)\,.$$ 
    It is characterized by the property that the discriminant section $\Delta = 4 A^3 + 27 B^2$ of $\mathcal{L}^{\otimes 12}$ does not vanish on any of the fibres of $S \times \Pb^1 \to S$ and the property that $(A,B)$ give the universal coefficients of the Weierstrass equation for $f$. 
    To describe this latter property, note that the sheaf $\mathcal{F} = f_* \mathcal{O}_{\mathcal{X}}(3 \cdot s(S))$ is a rank $3$ vector bundle on $S \times \Pb^1$ and the natural surjection
    \[
    f^* f_* \mathcal{O}_{\mathcal{X}}(3 s(S)) \to \mathcal{O}_{\mathcal{X}}(3 s(S))
    \]
    induces an embedding
    \[
    \begin{tikzcd}
    \mathcal X \arrow[r,hookrightarrow,"i"] \arrow[rd,"f",swap] & \Pb(\mathcal{F}) \arrow[d,"p"]\\
    & S \times \Pb^1
    \end{tikzcd}
    \]
    of $\mathcal{X}$ into the projective bundle $\Pb(\mathcal{F})$. Then $(A,B)$ are characterized by the property that $\mathcal{X}$ inside $\Pb(\mathcal{F})$ is cut out by the Weierstrass equation
    \begin{equation} \label{eqn:Weierstrassequation}
    Y^2Z = X^3 + AXZ^2+BZ^3\,,
    \end{equation}
    where $Z, X, Y$ are special coordinate sections of $\mathcal{O}_{\Pb(\mathcal{F})}(1), \mathcal{L}^{\otimes 2} \otimes \mathcal{O}_{\Pb(\mathcal{F})}(1)$ and $\mathcal{L}^{\otimes 3} \otimes\mathcal{O}_{\Pb(\mathcal{F})}(1)$ defined locally over $S \times \Pb^1$ (see \cite[Section II.1]{Miranda2} for details).
    
    Now let us return to the construction of the map $\Phi$ from \eqref{eqn:Phimapdef}. Recall what we want to construct: a map $S \to \Pov$ is given by the data of a line bundle $\mathcal{N}$ on $S$ together with sections
    \begin{equation} \label{eqn:aibjsections}
    a_0, \ldots, a_{4n} \in H^0(S, \mathcal{N}^{\otimes 4}),\ b_0, \ldots, b_{6n} \in H^0(S, \mathcal{N}^{\otimes 6})
    \end{equation}
    not vanishing simultaneously, and unique up to scaling by $\lambda \in H^0(S, \mathcal{O}^{*})$ with $\lambda \cdot ((a_i), (b_j)) = ((\lambda^4 a_i), (\lambda^6 b_j))$. 
    By the Kunneth formula \cite[Tag 0BEF]{Stacks} we have that for $e=4,6$
    \begin{align*}
        H^0(S \times \Pb^1, \mathcal{L}^{\otimes e}) &= H^0(S, \mathcal{L}_0^{\otimes e}) \otimes H^0(\Pb^1, \mathcal{O}_{\Pb^1}(e))\\
        &= \bigoplus_{j=0}^{e\cdot n} H^0(S, \mathcal{L}_0^{\otimes e}) X^j Y^{en-j}\,.
    \end{align*}
    Thus from the sections $A,B$ in \eqref{eqn:ABdef} we indeed obtain unique sections $a_i \in H^0(S, \mathcal{L}_0^{\otimes 4})$, $b_j \in H^0(S, \mathcal{L}_0^{\otimes 6})$ such that
    \[
    A = \sum_{i=0}^{4n} a_i X^i Y^{4n-i},\ B = \sum_{j=0}^{6n} b_j X^j Y^{6n-j}\,.
    \]
    By the description of the data necessary for a map to $\Pov$ above, taking $\mathcal{N}=\mathcal{L}_0$, we get a morphism $S \to \Pov$. It is well-defined since both for the pair $(A,B)$ and the tuples $((a_i), (b_j))$ the scaling action had weights $4,6$ on the two factors. 

    This concludes our construction of the morphism $\Phi: \WeierP_n \to \Pov$. 
    Before moving on, let us remark for later that this map is equivariant for the natural action of $\PGL_2$ on domain and target.
    We are now ready to prove Theorem~\ref{Thm:Weierstrass_Moduli}:

    \begin{proof}[Proof of Theorem~\ref{Thm:Weierstrass_Moduli}]
    On the open substack $U_\Delta \subseteq \Pov$, the inverse $\Psi$ of the functor $\Phi$ above can be defined by the Weierstrass equation \eqref{eqn:Weierstrassequation}. Indeed, an object of $\Pov$ over a scheme $S$ is given by the data of a line bundle $\mathcal{N}$ on $S$ together with sections $a_i, b_j$ as in \eqref{eqn:aibjsections}. Forming the sections 
    \[
    A = \sum_{i=0}^{4n} a_i X^i Y^{4n-i} \in H^0(S \times \Pb^1, (\mathcal{N}(n))^{\otimes 4}),\ B = \sum_{j=0}^{6n} b_j X^j Y^{6n-j}H^0(S \times \Pb^1, (\mathcal{N}(n))^{\otimes 6})
    \]
    the map $S \to \Pov$ factors through $U_\Delta$ if the discriminant $\Delta = 4A^3 + 27B^2$ is nonzero on all fibres over $S$. Setting $\mathcal{L} = \mathcal{N}(n)$ on $S \times \Pb^1$ as well as $\mathcal{F} = \mathcal{O}_{S \times \Pb^1} \oplus \mathcal{L}^{\otimes -2} \oplus \mathcal{L}^{\otimes -3}$, we define $\mathcal{X} \subset \Pb(\mathcal{F})$ by the Weierstrass equation \eqref{eqn:Weierstrassequation}. Then the induced map $f: \mathcal{X} \to S \times \Pb^1 \to S$ together with the section $S \times \Pb^1 \to \mathcal{X}$ defined by $X=Z=0$ in the local coordinates gives the desired object of $\WeierP_n$ over $S$.
    
    The result that $\Psi$ is the inverse functor to $\Phi$ follows from the classical theory of Weierstrass fibrations as explained in \cite[Section III]{Miranda2}. In particular, we note that these functors induce isomorphisms on the level of stabilizer groups: the isomorphisms of a family $\mathcal{X} \to S \times \Pb^1$ are induced by the automorphism of $\Pb(\mathcal{F}) \supset \mathcal{X}$ given by scaling the coordinates $Z,X,Y$ by $1, \lambda^2, \lambda^3$ for $\lambda \in H^0(S,\mathcal{O}_S^\times)$. This induces a corresponding action of $\lambda$ on $A,B$ (and thus on the coefficients $a_i, b_j$) sending them to $\lambda^4 A$, $\lambda^6 B$, which are precisely the automorphisms of the data defining the map $S \to U_\Delta$.
    
    The statement that the open substack $\Umin \subseteq \UDelta$ corresponds (under $\Phi$) to the locus of minimal Weierstrass fibrations, follows from the characterization of such fibrations in terms of their defining Weierstrass equations (see \cite[Proposition III.3.2]{Miranda2}).
    
    To see that $\Usf$ gives the locus of stable fibrations, we observe that the curve cut out by the Weierstrass equation
    \[
    Y^2Z = X^3 + aXZ^2 + bZ^3
    \]
    has at worst nodal singularities if and only if one of $a,b$ is nonzero (and a cusp otherwise). Thus the condition on $\Usf$ that the sections $A,B$ do not vanish simultaneously is precisely equivalent to the statement that all fibres of the associated Weierstrass fibrations are stable genus $1$ curves. 
    \end{proof}

    \begin{rmk}
    \par We already discussed how points $[(A,B)] \in \Usf$ can be interpreted as morphisms $\Pb^1 \to \Pc(4,6) \iso \Me$. Then the associated Weierstrass fibration over $\Pb^1$ is obtained by pulling back the universal family $\overline{\mathcal{C}}_{1,1} \to \overline{\mathcal{M}}_{1,1}$ resulting a relatively-minimal minimal stable elliptic fibration over $\Pb^1$.

    \medskip

    Similarly, given $[(A,B)] \in \Umin \setminus \Usf$, we obtain a rational map $\Pb^1 \dashrightarrow \overline{\mathcal{M}}_{1,1}$, with indeterminacy at the common zeros of $A,B$. As a globally minimal Weierstrass model is given (which always exists over $\Pb^1$ \cite[Section 4.10.]{SS}), the type of singular fibers as classified in \cite[Theorem 6.2]{Kodaira} and \cite[Proof of Theorem 1]{Neron} at the locus of indeterminacy can be completely determined purely algebraically by Tate's algorithm \cite{Tate} (see also \cite[IV \S 9]{Silverman2}). Here the associated Weierstrass fibration is obtained by pulling back $\overline{\mathcal{C}}_{1,1} \to \overline{\mathcal{M}}_{1,1}$ away from the locus of indeterminacy and then taking a suitable completion resulting a relatively-minimal minimal unstable elliptic fibration over $\Pb^1$.

    \medskip

    Finally, for $[(A,B)] \in \UDelta \setminus \Umin$ a similar process in fact does \emph{not} result in the Weierstrass fibration given by $\Phi^{-1}([(A,B)])$, but instead in the minimal Weierstrass fibration (or \emph{normal form})  associated to  $\Phi^{-1}([(A,B)])$. The reason is that the rational map $\Pb^1 \dashrightarrow \overline{\mathcal{M}}_{1,1}$ associated to $[(A,B)]$ is insensitive to common zeros of order at least $4$ on $A$ and at least $6$ on $B$ - such common zeros can be removed by the process described in \cite[Section 4.8.]{SS} without changing the underlying rational map. And this algorithm is precisely how the defining equation of the normal form is obtained (see \cite[Lemma III.3.4]{Miranda2}).
    \end{rmk}
    
    \subsection{Fibrations over the unparameterized projective line}
    We now come to the definition of the moduli stack $\Weier$ of Weierstrass fibrations over an unparameterized $\Pb^1$ and show that it is obtained as the stack quotient of $\WeierP$ under the action of $\PGL_2$.

    \begin{defn} \label{Def:Weier}
        Given a base scheme $S$, a family of Weierstrass fibrations (over smooth rational curves) parameterized by $S$ is specified by the data
        \begin{equation} \label{eqn:WeierPfamily_unfixed}
        \mathcal{X} \xrightarrow{f} \mathcal{P} \xrightarrow{\pi} S,\ \mathcal{P} \xrightarrow{s} \mathcal{X}
        \end{equation}
        such that 
        \begin{itemize}
            \item $\pi$ is a smooth, proper morphism, locally of finite type with geometric fibres isomorphic to $\Pb^1$,
            \item $f$ is a flat, proper map with section $s$, 
            \item the fibres $(\mathcal{X}_t \xrightarrow{f} \mathcal{P}_t, \mathcal{P}_t \xrightarrow{s} \mathcal{X}_t)$ over geometric points $t$ in $S$ are Weierstrass fibrations.
        \end{itemize}
        Given a morphism $T \to S$, we have a natural pullback fibration
        \[
        \mathcal{X} \times_S T = \mathcal{X}_T \xrightarrow{f_T} \mathcal{P}_T = \mathcal{P} \times_S T \to T, \mathcal{P} \times_S T \xrightarrow{s_T} \mathcal{X}_T\,.
        \]
        We say that two fibrations $\mathcal{X}, \mathcal{X}'$ over $S$ are isomorphic if there exists isomorphisms $\varphi: \mathcal{X} \to \mathcal{X}'$, $\psi: \mathcal{P} \to \mathcal{P}'$ over $S$ making the following diagram commute:
        \begin{equation} \label{eqn:Weierdiagram}
            \begin{tikzcd}
            \mathcal{X} \arrow[d,"f"] \arrow[r,"\varphi"] & \mathcal{X}' \arrow[d,"f'"]\\
            \mathcal{P} \arrow[r,"\psi"] \arrow[u,bend left,"s"] & \mathcal{P}'\arrow[u,bend left,"s'"]
            \end{tikzcd}
        \end{equation}
        Denote by $\Weier$ the stack
        whose objects over a scheme $S$ are the families of Weierstrass fibrations over $S$ as defined above. The morphisms over $T \to S$ are given by pullback diagrams.
        
        As before, define $\Weier_n$ to be the open and closed substack of $\Weier$ parameterizing Weierstrass fibrations whose fundamental line bundle has degree $n \in \N$ which implies that a given Weierstrass fibration has the discriminant degree is $12n$.
    \end{defn}
    \begin{rmk}
    To make a connection to related notions in the literature, we point out that in \cite[Definition 7.1]{Miranda} Miranda defines the notion of a stable rational Weierstrass fibration over a scheme. Translated to our notation, it requires that the map $\mathcal{P} \to S$ should be a $\Pb^1$-bundle. However, since projective bundles on a smooth cover of a scheme together with compatible gluing maps of the underlying schemes do not necessarily glue to a projective bundle globally, the above definition is more suitable for defining a stack.
    \end{rmk}
    We are now ready to prove Theorem~\ref{Thm:Weierstrass_Stack_Quotient}:
    \begin{proof}[Proof of Theorem~\ref{Thm:Weierstrass_Stack_Quotient}]
    To prove part a), observe that there is a natural morphism $q : \WeierP_n \to \Weier_n$ (setting $\mathcal{P} = S \times \Pb^1$ on families over $S$). This morphism is invariant under the action of $\PGL_2$ on $\WeierP_n$ and thus it factors canonically through the quotient stack $p : \WeierP_n \to \WeierP_n / \PGL_2 $ as follows:
    \begin{equation} \label{eqn:torsordiag}
    \begin{tikzcd}
    \WeierP_n \arrow[d,"p"] \arrow[dr,"q"] & \\
    \WeierP_n / \PGL_2 \arrow[r,"h"] & \Weier_n
    \end{tikzcd}
    \end{equation}
    To construct the inverse map to $h$, we want to use that by \cite[Theorem 4.1]{Romagny} the quotient $\WeierP_n / \PGL_2$ is the stack of $\PGL_2$-torsors mapping equivariantly to $\WeierP_n$. Thus to construct $h^{-1}$ we only need to show that the natural $\PGL_2$-action on $\WeierP_n$ and the identity map $\WeierP_n \to \WeierP_n$ make $q$ a $\PGL_2$-torsor over $\Weier_n$.
    
    First, we check that $q$ is smooth-locally a trivial $\PGL_2$ torsor: given any family $\mathcal{X} \xrightarrow{f} \mathcal{P} \xrightarrow{\pi} S$ we can find a smooth cover $S' \to S$ such that $\mathcal{P}_{S'}$ admits three disjoint sections over $S'$ (e.g. by taking $S'$ to be the fibre product of three copies of $\mathcal{P}$ over $S$ minus the diagonal). But any smooth, proper, locally finite type morphism with geometric fibers $\Pb^1$ and three disjoint sections is actually isomorphic to a product of the base with $\Pb^1$ such that the three sections are $0,1,\infty \in \Pb^1$ (this follows from the fact that $\overline{\mathcal{M}}_{0,3} = \mathrm{Spec}(K)$). Thus the family $\mathcal{P}_{S'}$ is isomorphic to $S' \times \Pb^1$. The pullback of $q$ under the map $S' \to \Weier_n$ is given by the families of Weierstrass fibrations $\mathcal{X}' \to S' \times \Pb^1$ together with an isomorphism  to $\mathcal{X}_{S'} \to S' \times \Pb^1 = \mathcal{P}_{S'}$. After identifying $\mathcal{X}' = \mathcal{X}_{S'}$ using this data, the remaining freedom for the isomorphism is the map $\psi: S' \times \Pb^1 \to S' \times \Pb^1$ in the diagram \eqref{eqn:Weierdiagram}, which is equivalent to a map $S' \to \PGL_2$. Thus we have
    \[
    S' \times_{\Weier_n} \WeierP_n = S' \times \PGL_2,
    \]
    so that indeed the map $q$ is a locally trivial $\PGL_2$-torsor. Using this together with \cite[Theorem 4.1]{Romagny}, we can define a map $h' : \Weier_n \to \WeierP_n / \PGL_2$ in the diagram \eqref{eqn:torsordiag} and it is straightforward to check that $h,h'$ are mutually inverse. This finishes the proof of part a).
    
    We obtain the statements in part b) as a special case of Theorem \ref{Thm:Moduli_GIT}, part b). Indeed, we simply note that by Proposition \ref{Prop:stablesemistablecrit} the stacks $\Weier_{\mathrm{min},n}$ (for $n \geq 2$) and $\Weier_{\mathrm{sf},n}$ (for $n \geq 1$) are contained in the stable locus $\M_n^{(4,6),s}$.
    \end{proof}

    \begin{rmk} \label{Rmk:Weiermin1}
    In part b) of Main Theorem \ref{Thm:Weierstrass_Stack_Quotient} the stated properties of $\Weier_{\mathrm{min},n}$ do \emph{not} hold in case $n=1$, i.e. for $\Weier_{\mathrm{min},1}$. Indeed, the stack $\Weier_{\mathrm{min},1}$ contains points with positive dimensional stabilizers, thus it is no longer of Deligne--Mumford type. Using Lemma \ref{Lem:constmap} and Proposition \ref{Prop:finredstabilizercrit} one can see that these points are precisely given by the Weierstrass data $[A:B]$ in the $\PGL_2$-orbit of
    \begin{align*}
        [A:B] = [0:X Y^5], [XY^3:0], [0:X^2 Y^4] \text{ and } [a_0 X^2 Y^2 : a_1 X^3 Y^3] \text{ for }[a_0:a_1] \in \mathcal{P}(4,6)\,,
    \end{align*}
    where in each case we have a nontrivial action of $\Gb_m$ on the coordinates $X,Y$ fixing the fibrations.
    We note that these fibrations already appeared in \cite[Table 5.1]{MP} (see also \cite[Example 5.10]{Martin}) as the four types of rational extremal elliptic fibrations with two singular fibres both of which are unstable types in dual pair. One can see that the open substack $\Weier'_{\mathrm{min},1}$ of $\Weier_{\mathrm{min},1}$ obtained by removing these points is Deligne--Mumford for $\mathrm{char}(K) \nmid n$ and tame for $\mathrm{char}(K) = 0$ or $\mathrm{char}(K) > 12$. We thank Gebhard Martin for pointing out the above examples and references.
    \end{rmk}

    \medskip

    \begin{appendix}
    \section{Applications to stacks of self-maps of \texorpdfstring{$\Pb^1$}{PP\textasciicircum1}}
    \label{sec:Appen_Sil} 

    Using the techniques in the paper above, we can compute some motives of spaces of rational self-maps of $\mathbb{P}^1$ appearing in work \cite{Milnor, Silverman} of Milnor and Silverman.
    There, a different action of $\PGL_2$ on $\Hom_n(\Pb^1,\Pb^1)$ is considered: an element $\varphi \in \PGL_2$ acts on $f \in \Hom_n(\Pb^1,\Pb^1)$ by conjugation, sending $f$ to $\varphi \circ f \circ \varphi^{-1}$, identifying both domain and target $\Pb^1$ to be the same $\Pb^1$ by a simultaneous choice of coordinates.
    Denote by
    \begin{equation} \label{eqn:MnSilverman}
        \mathcal{M}_n = [\Hom_n(\Pb^1,\Pb^1) / \PGL_2]
    \end{equation}
    the associated quotient stack. The paper \cite{Silverman} studies the coarse moduli space $M_n$ of $\mathcal{M}_n$ and a compactification $\overline{M}_n$ of this space using the techniques of Geometric Invariant Theory \cite{GIT}, also using the group $\SL_2$ instead of $\PGL_2$ to form the quotient. The coarse moduli space $M_n = \Hom_n(\Pb^1,\Pb^1)/\! /\SL_2$ is a geometric quotient of $\Hom_n(\Pb^1,\Pb^1)$ which is affine, integral, connected, and of finite type over $\Spec~\Zb$ by \cite[Theorem 1.1.]{Silverman}. The compactification $\overline{M}_n$ is a geometric quotient for $n$ even and a categorical quotient for $n$ odd by \cite[Theorem 2.1.]{Silverman}. The moduli spaces $\Hom_n(\Pb^1,\Pb^1)/\! /\SL_2$, $\Hom_n(\Pb^1,\Pb^1)/\! /\PSL_2$ and their various compactifications have been studied in depth by many others (see \cite{DeMarco, Levy}) for analyzing the arithmetic properties of dynamical systems.

    Using the methods \& techniques in this paper, we have the following result.

    \begin{thm} \label{Thm:motivequotientSilverman}
    Let $n \geq 2$ and $\mathrm{char}(K)=0$ or $\mathrm{char}(K)>n$, then the stack $\mathcal{M}_n$ from \eqref{eqn:MnSilverman} is a smooth, irreducible, separated and tame Deligne--Mumford stack of finite type with affine diagonal over $K$.
    
    In case that the degree $n$ is an even number, we have an equality in $K_0(\mathrm{Stck}_K)$ 
    \begin{equation} \label{eqn:Mnmotiveeqn}
        \left\{\M_{n}\right\} = \frac{\left\{\Hom_n(\Pb^1,\Pb^1)\right\}}{\left\{\PGL_2\right\}} = \Lb^{2n - 2}~,
    \end{equation}
    and thus for $\mathrm{char}(\Fb_q) > n$, the weighted point count of $\M_{n}$ over $\Fb_q$ is equal to
        $$\#_q\left(\M_{n}\right) = q^{2n - 2}~.$$
    \end{thm}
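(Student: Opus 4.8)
The plan is to prove the two assertions with the same two engines used in the body of the paper: the geometric statement follows by adapting the stabiliser analysis and quotient construction of Section~\ref{sec:Rational_Curves}, while the motivic formula \eqref{eqn:Mnmotiveeqn} follows from the $\GL_2$-versus-$\PGL_2$ trick of Section~\ref{sec:Motive}, with the parity condition ``$n$ even'' now playing the role that ``$n$ odd'' played for the precomposition action.

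For the first part, recall that for the target $\Pb^1 = \Pc(1,1)$ the ambient weighted projective stack is the ordinary $\Pb^{2n+1}$, so $\Hom_n(\Pb^1,\Pb^1)$ is the smooth quasi-projective \emph{variety} obtained as the complement of the resultant hypersurface. The conjugation stabiliser of a point $f\in\Hom_n(\Pb^1,\Pb^1)$ is exactly its automorphism group $\mathrm{Aut}(f)=\{\varphi\in\PGL_2:\varphi\circ f=f\circ\varphi\}$, and for $n\ge 2$ this is finite and reduced by the classical theory of \cite{Milnor, Silverman}: a commuting Möbius map permutes the periodic points of $f$, of which (after passing to a suitable period) there are at least three distinct ones, so the same simple $3$-transitivity argument as in Lemma~\ref{Lem:reducedinclusion} embeds $\mathrm{Aut}(f)$ into a finite symmetric group, whose order is bounded in terms of $n$. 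Hence $\mathrm{char}(K)=0$ or $>n$ makes these stabilisers tame. Feeding this into the extension-of-finite-unramified-groups argument of Theorem~\ref{Thm:QuotisDM} (which here is even simpler, since $\Hom_n(\Pb^1,\Pb^1)$ is a scheme and the point-automorphisms are trivial) shows that $\mathcal{M}_n$ is a smooth, tame Deligne--Mumford stack; irreducibility, finite type and affine diagonal come from \cite[Theorem~4.1]{Romagny} as in Theorem~\ref{Thm:Moduli_GIT}a). For separatedness I would mirror Proposition~\ref{Pro:stablesemistablestackproperties}: \cite{Silverman} shows that every point of $\Hom_n(\Pb^1,\Pb^1)$ is properly stable for the conjugation linearisation, so by \cite[Converse~1.13]{GIT} the action map is proper and the diagonal of $\mathcal{M}_n$ is proper.

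The heart of the matter, and the step I expect to be the main obstacle, is realising the \emph{conjugation} quotient as a $\GL_2$-quotient. Writing $f=[F_0:F_1]$ with $F_0,F_1\in H^0(\Pb^1,\mathcal{O}(n))$, the ambient vector space is $W=K^2\otimes H^0(\Pb^1,\mathcal{O}(n))$, on which $\GL_2$ acts by the standard representation on the target factor $K^2$ and by precomposition on the source factor, with $\Hom_n(\Pb^1,\Pb^1)\subseteq\Pb(W)$ the resultant-nonzero locus. A direct computation shows that the scalar centre $Z=\{\mu I\}\subseteq\GL_2$ acts on all of $W$ with a single weight $1-n$ (weight $+1$ from the target, weight $-n$ from precomposition). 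To run the quotient swap \eqref{eqn:quotientswap} I would twist the $\GL_2$-action by the integral power $\mathsf{det}^{\,n/2}$ of the determinant; since $\mathsf{det}$ restricts to weight $2$ on $Z$, the twisted centre weight becomes $1-n+2\cdot(n/2)=1$, precisely the scaling weight defining $\Pb(W)$. This twist exists \emph{only} for $n$ even, which is exactly where the parity hypothesis enters --- for odd $n$ the symbol $\mathsf{det}^{\,n/2}$ is not a character of $\GL_2$ and the construction breaks down. One then checks, as in Section~\ref{sec:Motive}, that the induced $\PGL_2=\GL_2/Z$ action on $\Pb(W)$ is still the conjugation action (the $\mathsf{det}^{\,n/2}$ scalar dies projectively), so \cite[Remark~2.4]{Romagny} yields an isomorphism $\mathcal{M}_n\cong[\widehat W/\GL_2]$, where $\widehat W\subset W\setminus\{0\}$ is the preimage of $\Hom_n(\Pb^1,\Pb^1)$ and $\widehat W\to\Hom_n(\Pb^1,\Pb^1)$ is a $\Gb_m$-torsor.

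With this identification the motive is immediate. Since $\Gb_m$ and $\GL_2$ are special, Lemma~\ref{lem:Gm_quot} gives $\{\widehat W\}=\{\Hom_n(\Pb^1,\Pb^1)\}\cdot\{\Gb_m\}$, whence
\[
\{\mathcal{M}_n\}=\frac{\{\widehat W\}}{\{\GL_2\}}=\frac{\{\Hom_n(\Pb^1,\Pb^1)\}}{\{\GL_2\}/\{\Gb_m\}}=\frac{\{\Hom_n(\Pb^1,\Pb^1)\}}{\{\PGL_2\}}\,,
\]
which is \eqref{eqn:Mnmotiveeqn}. Specialising \cite[Proposition~4.5]{HP2} to $\lambdavec=(1,1)$ gives $\{\Hom_n(\Pb^1,\Pb^1)\}=(1+\Lb)(\Lb^{2n}-\Lb^{2n-1})=\Lb^{2n-1}(\Lb^2-1)$, and dividing by $\{\PGL_2\}=\Lb(\Lb^2-1)$ from Lemma~\ref{lem:SL_Motive} yields $\{\mathcal{M}_n\}=\Lb^{2n-2}$. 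Applying the point-counting ring homomorphism $\Lb\mapsto q$ (valid for $\mathrm{char}(\Fb_q)>n$, where the stack is a tame Deligne--Mumford stack of finite type) gives $\#_q(\mathcal{M}_n)=q^{2n-2}$, completing the proof.
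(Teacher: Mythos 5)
Your treatment of the motivic formula is essentially the paper's own argument: you twist the natural $\GL_2$-lift of the conjugation action by an integral power of $\mathsf{det}$ so that the centre acts with weight $1$ (the paper twists by $\mathsf{det}^{-(n-2)/2}$ starting from central weight $n-1$; you twist by $\mathsf{det}^{n/2}$ starting from $1-n$ --- the same computation up to the sign convention the paper explicitly suppresses), identify $\M_n \cong [\widehat W/\GL_2]$ via \cite[Remark 2.4]{Romagny}, and use speciality of $\GL_2$ and $\Gb_m$ together with \cite[Proposition 4.5]{HP2} for $\lambdavec=(1,1)$. The separatedness argument (proper stability from \cite{Silverman} plus \cite[Converse 1.13]{GIT}) and the smoothness, irreducibility, finite type and affine diagonal claims via \cite[Theorem 4.1]{Romagny} also match the paper.

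The gap is in your stabiliser analysis, i.e.\ in the proof that $\M_n$ is a \emph{tame Deligne--Mumford} stack. You argue that a commuting M\"obius transformation permutes the periodic points of $f$, of which there are at least three, and conclude via the $3$-transitivity argument that the stabiliser is finite and reduced. This only controls the \emph{reduced} part of the stabiliser group scheme: an infinitesimal subgroup scheme such as $\alpha_p$ or $\mu_p$ (which is exactly what can occur inside $\PGL_2$ in characteristic $p$, cf.\ the example $Z=V(X(X-Y)^pY)$ in Appendix \ref{Sect:stabfatpoints}) acts trivially on every $K$-point and is therefore invisible to its permutation action on a finite set of reduced points, so ``$\Aut(f)$ embeds into a symmetric group'' does not yield reducedness. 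To exclude such infinitesimal stabilisers one needs the fat-point criterion of Proposition \ref{Pro:stabilizerfatpoints}, which requires an invariant zero-dimensional subscheme with at least three points whose \emph{multiplicities} are prime to $\mathrm{char}(K)$ --- and the periodic-point schemes $V(f^{\circ k}(x)-x)$ come with no such multiplicity control; moreover their degree $n^k+1$ ruins the order bound needed for tameness, since $\mathrm{char}(K)>n$ does not prevent the characteristic from dividing the order of a permutation group on roughly $n^k$ letters. The paper instead applies Proposition \ref{Pro:stabilizerfatpoints} to $\mathrm{Fix}(f)\cup\mathrm{Crit}(f)$: $\mathrm{Fix}(f)$ has degree $n+1$ with local multiplicities at most $n$, $\mathrm{Crit}(f)$ has degree $2n-2$ with local multiplicities at most $n-1$, and any critical point has multiplicity at most $1$ in $\mathrm{Fix}(f)$, so for $\mathrm{char}(K)=0$ or $>n$ one always finds at least three points of multiplicity prime to the characteristic. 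You would need to substitute this (or an equivalent multiplicity-controlled configuration) for the periodic-point argument, and the appeal to the ``classical theory'' of \cite{Milnor, Silverman} does not fill this hole, as that theory is insensitive to non-reduced group scheme structure in positive characteristic.
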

    \begin{proof}
    The fact that $\M_{n}$ is a smooth algebraic stack of finite type with affine diagonal follows from \cite[Theorem 4.1]{Romagny} as before. Similarly, the separatedness follows in the same way as described in the proof of Proposition \ref{Pro:stablesemistablestackproperties}. We are again using that $\Hom_n(\Pb^1,\Pb^1)$ lies in the stable locus of a linearized line bundle on $\Pb^{2n+1}$ by the work of \cite{Silverman}.
    
    To show that $\M_{n}$ is a tame Deligne--Mumford stack, we prove that $\PGL_2$ acts with finite, reduced and tame stabilizers at geometric points of $\Hom_n(\Pb^1,\Pb^1)$. 
    To see this note that any stabilizer of a geometric point $[f] \in \Hom_n(\Pb^1,\Pb^1)$ must fix the two closed subschemes $\mathrm{Fix}(f), \mathrm{Crit}(f) \subseteq \Pb^1$ of fixed and critical points of $f$, respectively. Here
    \[
    \mathrm{Fix}(f) = \Gamma_f \cap \Delta \subseteq \Pb^1 \times \Pb^1
    \]
    is defined as the intersection of the graph of $f$ with the diagonal $\Delta$ inside $\Pb^1 \times \Pb^1$, whereas $\mathrm{Crit}(f) = V(df)$ is the vanishing scheme of the differential $df$ of $f$. By the assumption on the characteristic, combined with the intersection theory of $\Pb^1 \times \Pb^1$, we have that $\mathrm{Fix}(f)$ is an effective Cartier divisor of degree $n+1$ in $\Pb^1$. Moreover, we claim that any point of $\Pb^1$ has multiplicity at most $n$ inside $\mathrm{Fix}(f)$. Indeed, around a point of $\Pb^1$ which we can assume to be at $0$, the scheme $\mathrm{Fix}(f)$ is cut out by the equation $f(x) = x$, so the statement follows since the degree of $f$ is $n$.
    By a similar argument, the subscheme $\mathrm{Crit}(f)$ is an effective Cartier divisor with multiplicity at most $n-1$ at each point and, by the Riemann-Hurwitz theorem, its total degree is $2n-2$. 
    
    We claim that the union $\mathrm{Fix}(f) \cup \mathrm{Crit}(f)$ contains at least $3$ points whose multiplicities are not divisible by $\mathrm{char}(K)$. Proving this, we conclude that we have tame stabilizers by Proposition \ref{Pro:stabilizerfatpoints}. But observe that by the observations about the 
    total degrees as well as bounds on the degrees at each point given above, both $\mathrm{Fix}(f)$ and $\mathrm{Crit}(f)$ contain at least two points with the desired properties. Then we simply observe that any point $q \in \mathrm{Crit}(f)$ can have multiplicity at most $1$ in $\mathrm{Fix}(f)$. Indeed, $q \in \mathrm{Crit}(f)$ implies $df(q)=0$, so that the local equation $f(x)=x$ of $\mathrm{Fix}(f)$ can have at most a simple zero at $x=q$.
    
    To see the equality in the Grothendieck ring of $K$-stacks, we want to copy the strategy from Section \ref{sec:Motive} to replace the $\PGL_2$-action by a (twisted) $\GL_2$-action on a bigger space having the same quotient. For this consider the natural action of $\GL_2$ on 
    $$U_{\Lambdavec} = H^0\left(\Pb^1, \mathcal{O}_{\Pb^1}(n)\right)^{\oplus 2}$$
    lifting the $\PGL_2$-conjugation action above. Then the diagonal torus $T \subset \GL_n$ acts with weight $n-1$. In the case that $n$ is even, we can twist this action by $\mathsf{det}^{-(n-2)/2}$ to get to the weight $1$ action satisfying $[(U_{\Lambdavec} \setminus \{0\})/T] = \Pb^{2n+1}$. Then the same proof as for Theorem~\ref{Thm:motivequotient1} gives the result.
    \end{proof}

    Our result that the motive $\{\mathcal{M}_2\} \in K_0(\mathrm{Stck}_K)$ of the fine moduli stack $\mathcal{M}_2 = [\Hom_2(\Pb^1,\Pb^1) / \PGL_2]$ is equal to $\Lb^2$ is consistent with the classical computation as in \cite[Theorem 3.1.]{Milnor} and \cite[Theorem 1.2.]{Silverman} regarding the coarse moduli space $\Hom_2(\Pb^1,\Pb^1)/\! /\SL_2 \iso \Ab^2$ of schemes over $\Spec~\Zb$. 

    \medskip

    \section{Arithmetic geometry of stabilizers of finite subschemes of \texorpdfstring{$\Pb^1$}{P1}}
    \label{Sect:stabfatpoints}

    Let $K$ be an algebraically closed field and let $Z = V(F) \subset \Pb^1$ be a zero-dimensional subscheme, defined by some nonzero homogeneous polynomial
    \begin{equation}
        F = (a_1 X + b_1 Y)^{e_1} \cdots (a_m X + b_m Y)^{e_m} \in H^0(\Pb^1, \mathcal{O}(\sum_{i=1}^m e_i))\,.
    \end{equation}
    Since $K$ is algebraically closed, the polynomial decomposes into linear factors as above and $Z$ is supported at the points $p_i = [-b_i : a_i] \in \Pb^1$, which we assume to be distinct.

    In this section, we want to study the \emph{stabilizer subgroup} $\Stab(Z)$ inside the automorphism group $\Aut(\Pb^1) = \PGL_2$. It is the largest subgroup of $\PGL_2$ satisfying that the subscheme $Z$ is invariant under this subgroup.\footnote{~For a formal definition see \cite[II.1.3.4]{GroupesAlgebriques}, where this concept is introduced as the normalizer of a subscheme under a group action. In our case, one can consider the induced action of $\PGL_2$ on the projective space $\Pb(W)$ of homogeneous polynomials up to scaling and obtain $\Stab(Z)$ as the stabilizer group of the defining polynomial $[F]$ of $Z$, see below.}
    Alternatively, one can identify the subscheme $Z$ with its defining polynomial
    \[
    [F] \in \mathbb{P}\left(H^0(\Pb^1, \mathcal{O}(\sum_{i=1}^m e_i)) \right) \eqqcolon \Pb(W)\,.
    \]
    Then for the natural action of $\PGL_2$ on $\Pb(W)$ induced by the action of $\PGL_2$ on the homogeneous coordinates $X,Y$, the stabilizer of $Z$ equals the automorphism group of the point $\mathrm{Spec}(K) \to [\Pb(W) / \PGL_2]$ associated to $F$. The study of quotients of (open subsets of) $\Pb(W)$ under $\PGL_2$ is a classical subject of invariant theory, known as the theory of \emph{binary quantics} (see e.g. \cite[Section 4.1]{GIT}).

    In the section below, we want to obtain a necessary and sufficient criterion for the stabilizer group $\mathrm{Stab}(Z)$ to be finite and reduced over $K$. In the main paper, such a criterion is necessary for proving the Deligne--Mumford property of the quotient stacks in Theorem \ref{Thm:Moduli_GIT}.\footnote{~While we do not pursue this direction further, our criterion (Proposition \ref{Pro:stabilizerfatpoints}) could also be applied directly to the study of binary quantics above: combining it with the characterization of the GIT-stable points $\Pb(W)^{s}$ in \cite[Proposition 4.1]{GIT}, it is easy to see that the stack $[\Pb(W)^s / \PGL_2]$ is of Deligne-Mumford type away from finitely many characteristics.}

    A first necessary condition for the finiteness of $\mathrm{Stab}(Z)$ is that the number $m$ of distinct points in $Z$ is at least $3$. Indeed, otherwise we can use the action of $\PGL_2$ to send $p_1$ to $0$ and $p_2$ to $\infty$ and obtain a subgroup of $\PGL_2$ conjugate to $\Stab(Z)$ which contains an entire copy of $\Gb_m$ (acting by scaling on the coordinate $Y$). 

    As we will see below, the condition $m \geq 3$ is also sufficient in characteristic zero, but complications can arise in positive characteristic. Indeed, for $\mathrm{char}(K)=p$ consider the subscheme 
    \[
    Z = V(X \cdot (X-Y)^p \cdot Y) = V(X \cdot (X^p-Y^p) \cdot Y) \subset \Pb^1\,.
    \]
    Then the stabilizer $\Stab(Z) = \mu_p \rtimes \underline{\Sym}(2)$ is non-reduced, where $\mu_p \subset \Gb_m$ acts by scaling on the coordinate $Y$. However, below we will see that by excluding finitely many characteristics (depending on the multiplicities $e_i$ of the points of $Z$), we can ensure that $\Stab(Z)$ is finite and reduced.

    \begin{lem} \label{Lem:reducedinclusion}
    There exists a natural closed embedding $\Stab(Z)_{\text{red}} \subseteq \Stab(Z_{\text{red}})$ of the underlying reduced scheme of $\Stab(Z)$ inside the stabilizer of the reduced scheme $Z_{\text{red}}$. Moreover, for $m \geq 3$ we have that $\Stab(Z_{\text{red}})$ is naturally a closed subscheme of the finite constant group scheme $\underline{\Sym}(m)$.
    \end{lem}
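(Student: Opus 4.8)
The plan is to treat the two assertions separately, reducing each to a verification on geometric points together with an explicit analysis of the $\PGL_2$-action on $\Pb^1$.

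For the first inclusion, I would exploit that $\Stab(Z)_{\text{red}}$ is a \emph{reduced} closed subscheme of $\PGL_2$. For a reduced source, a closed immersion into $\PGL_2$ factors through a given closed subscheme precisely when it does so on underlying topological spaces; hence it suffices to check the containment on $K$-points, recalling that $K$ is algebraically closed so that closed points are $K$-rational and dense. A $K$-point $g$ of $\Stab(Z)$ is an automorphism of $\Pb^1$ with $g\cdot Z = Z$ as subschemes. Since $g$ is an isomorphism, it commutes with the formation of the reduction, so $g \cdot Z_{\text{red}} = (g\cdot Z)_{\text{red}} = Z_{\text{red}}$, giving $g \in \Stab(Z_{\text{red}})(K)$. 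As the $K$-points of $\Stab(Z)$ coincide with those of $\Stab(Z)_{\text{red}}$, this yields the desired closed embedding, which is natural since both groups are defined as subgroup schemes of $\PGL_2$.

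For the second assertion I would first produce the comparison map. The reduced scheme $Z_{\text{red}} \cong \coprod_{i=1}^m \Spec(K)$ carries a natural action of $\Stab(Z_{\text{red}})$, and since the automorphism group scheme of $m$ disjoint copies of $\Spec(K)$ is the constant group scheme $\underline{\Sym}(m)$, this action defines a homomorphism $\phi\colon \Stab(Z_{\text{red}}) \to \underline{\Sym}(m)$ recording the induced permutation of the points $p_1,\dotsc,p_m$. The content is that $\phi$ is a closed immersion, which I would deduce from the scheme-theoretic triviality of its kernel. By construction $\ker\phi$ is the pointwise stabilizer of $\{p_1,\dotsc,p_m\}$, which for $m \geq 3$ is contained in the pointwise stabilizer of three distinct points; after normalizing these to $0,1,\infty$ by an element of $\PGL_2$, the conditions ``fixes $0$'' and ``fixes $\infty$'' cut out the diagonal torus $\Gb_m \subseteq \PGL_2$, on which ``fixes $1$'' becomes the reduced equation $t=1$. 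Thus the scheme-theoretic pointwise stabilizer is trivial; equivalently, its tangent space vanishes, since a nonzero element of the Lie algebra of $\PGL_2$, acting on an affine chart by a vector field $(a + bz + cz^2)\,\partial_z$, cannot vanish at all three of $0,1,\infty$ — a computation valid in every characteristic.

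With $\ker\phi = e$ established at the scheme-theoretic level, $\phi$ is a monomorphism of group schemes of finite type over the field $K$, and such a monomorphism is automatically a closed immersion: a homomorphism of algebraic groups factors as a faithfully flat quotient onto its (closed) image followed by a closed immersion, and a trivial kernel forces the quotient map to be an isomorphism. This exhibits $\Stab(Z_{\text{red}})$ as a closed subscheme of $\underline{\Sym}(m)$, as claimed, and in particular shows it is finite. I expect the only genuinely delicate point to be the verification that the kernel is trivial as a \emph{scheme} rather than merely on points, since in positive characteristic a set-theoretically trivial kernel can still be infinitesimally nontrivial; the tangent-space computation above is precisely what rules this out uniformly in the characteristic.
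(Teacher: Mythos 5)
Your argument is correct and follows essentially the same route as the paper: the first inclusion is checked on $K$-points using reducedness and density of closed points, and the second assertion is proved via the permutation representation on $\{p_1,\dotsc,p_m\}$ whose kernel is killed by the (scheme-theoretic) simple $3$-transitivity of $\PGL_2$ on $\Pb^1$, which the paper phrases as the isomorphism $\PGL_2 \cong (\Pb^1)^3 \setminus \Delta$ and you verify by the explicit coordinate/tangent-space computation. Your added remark that a homomorphism of algebraic groups with scheme-theoretically trivial kernel is a closed immersion makes explicit a step the paper leaves implicit, but the substance is identical.
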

    \begin{proof}
    We have that $\Stab(Z)_{\text{red}}$ and $\Stab(Z_{\text{red}})$ are both closed subgroup schemes of $\PGL_2$. By the definition of the reduced subscheme $\Stab(Z)_{\text{red}}$ it thus suffices to show the above inclusion on the level of points (see \cite[Tag 0356]{Stacks}). Moreover, since the $K$-points are dense in both subschemes, it actually suffices to verify the inclusion on $K$-points. But for an element $g \in \PGL_2(K)$ fixing $[F]$, it must in particular fix the set of zeros of $F$, which is equal to $Z_{\text{red}}$.

    Now assume $m \geq 3$, then by the definition of the stabilizer scheme we have a group action of $\Stab(Z_{\text{red}})$ on $Z_{\text{red}} = \{p_1, \ldots, p_m\}$ inducing a morphism
    \begin{equation} \label{eqn:symmorphism}
        \Stab(Z_{\text{red}}) \to \Aut(Z_{\text{red}})=\underline{\Sym}(m)\,.
    \end{equation}
    We want to show that the kernel of the morphism \eqref{eqn:symmorphism} is trivial. An element of the kernel must then fix each point $p_i$ separately, but for any three distinct points $p_1, p_2, p_3$ it is classical that the morphism
    \[
    \PGL_2 \to \Pb^1 \times \Pb^1 \times \Pb^1 \setminus \Delta,\ g \mapsto (g\cdot p_1, g\cdot p_2, g\cdot p_3)
    \]
    is an isomorphism. This shows that the kernel is trivial and finishes the proof.
    \end{proof}
    \begin{cor} \label{Cor:stabchar0}
    For $\mathrm{char}(K)=0$ and $m \geq 3$ we have that $\Stab(Z)$ is finite and reduced.
    \end{cor}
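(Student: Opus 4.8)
The plan is to deduce the result almost immediately from Lemma~\ref{Lem:reducedinclusion}, with the one extra ingredient being Cartier's theorem. Recall that $\Stab(Z)$ is by construction a closed subgroup scheme of $\PGL_2$, hence a group scheme of finite type over $K$. Since $\mathrm{char}(K)=0$, Cartier's theorem tells us that every such group scheme is smooth, and in particular reduced. Thus the first step is to observe that $\Stab(Z)$ coincides with its own reduction, i.e. $\Stab(Z) = \Stab(Z)_{\mathrm{red}}$.

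Next I would invoke Lemma~\ref{Lem:reducedinclusion}. Because $m \geq 3$, that lemma provides closed embeddings
\[
\Stab(Z) = \Stab(Z)_{\mathrm{red}} \subseteq \Stab(Z_{\mathrm{red}}) \subseteq \underline{\Sym}(m)\,,
\]
realizing $\Stab(Z)$ as a closed subscheme of the finite constant group scheme $\underline{\Sym}(m)$. A closed subscheme of a finite scheme is finite, so $\Stab(Z)$ is finite; and we already arranged that it is reduced. This gives both conclusions of the corollary simultaneously.

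There is no genuine obstacle here, since all the geometric content — the simple $3$-transitivity of the $\PGL_2$-action that forces the kernel of the map to $\underline{\Sym}(m)$ to be trivial — was already carried out in the proof of Lemma~\ref{Lem:reducedinclusion}. The only point worth emphasizing is \emph{why} the characteristic hypothesis is used: it is precisely Cartier's theorem that lets us bypass the non-reducedness phenomena illustrated by the example $Z = V(X \cdot (X-Y)^p \cdot Y)$ in characteristic $p$, where $\Stab(Z)_{\mathrm{red}}$ embeds into $\underline{\Sym}(m)$ but $\Stab(Z)$ itself acquires an infinitesimal factor $\mu_p$ and is therefore non-reduced. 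In characteristic zero this factor cannot appear, so reducedness is automatic and finiteness follows formally from the bound $\Stab(Z_{\mathrm{red}}) \subseteq \underline{\Sym}(m)$.
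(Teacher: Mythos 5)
Your argument is correct and follows the paper's own proof essentially verbatim: the paper also notes that $\Stab(Z)$ is automatically reduced in characteristic zero (you just name Cartier's theorem explicitly) and then applies the chain of inclusions from Lemma~\ref{Lem:reducedinclusion} to conclude finiteness and reducedness from the same embedding into $\underline{\Sym}(m)$. No gaps.
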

    \begin{proof}
    In characteristic zero, the group $\Stab(Z)$ is automatically reduced, so by Lemma \ref{Lem:reducedinclusion} we have inclusions
    \[\Stab(Z) \subseteq \Stab(Z_{\text{red}}) \subseteq \underline{\Sym}(m)\,.\]
    Since $\underline{\Sym}(m)$ is finite and reduced, so is its subgroup $\Stab(Z)$.
    \end{proof}
    We are now ready to state the full criterion.
    \begin{prop} \label{Pro:stabilizerfatpoints}
    Let $K$ be an algebraically closed field and let $Z = V(F) \subset \Pb^1$ be a zero-dimensional subscheme, defined by some nonzero homogeneous polynomial
    \begin{equation}
        F = (a_1 X + b_1 Y)^{e_1} \cdots (a_m X + b_m Y)^{e_m} \in H^0(\Pb^1, \mathcal{O}(\sum_{i=1}^m e_i))\,,
    \end{equation}
    for distinct points $p_i = [-b_i : a_i] \in \Pb^1$. Then the stabilizer group $\Stab(Z)$ is finite and reduced if and only if the number of points $p_i$ with $\mathrm{char}(K) \nmid e_i$ is at least three.
    \end{prop}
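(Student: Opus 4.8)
The plan is to separate the two requirements---finiteness and reducedness of $\Stab(Z)$---and to reduce the whole statement to a computation of the Lie algebra $\mathrm{Lie}(\Stab(Z)) = T_e\Stab(Z)$. First I would dispose of finiteness. Let $m$ be the number of distinct points $p_i$. If $m \le 2$, then (as observed just before Lemma~\ref{Lem:reducedinclusion}) after moving two of the points to $0,\infty$ the group $\Stab(Z)$ contains a copy of $\Gb_m$ and is infinite; note that then at most $m \le 2$ of the $e_i$ can be coprime to $\mathrm{char}(K)$, consistent with the claim. If $m \ge 3$, then Lemma~\ref{Lem:reducedinclusion} gives closed immersions $\Stab(Z)_{\mathrm{red}} \subseteq \Stab(Z_{\mathrm{red}}) \subseteq \underline{\Sym}(m)$, so $\Stab(Z)_{\mathrm{red}}$ is finite; hence $\Stab(Z)$ is zero-dimensional and of finite type over $K$, thus finite. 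Therefore for $m \ge 3$ the group $\Stab(Z)$ is always finite, and the entire content of the proposition lies in deciding when it is reduced.

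Over the algebraically closed field $K$, a finite group scheme is reduced if and only if it is étale, which in turn holds if and only if its Lie algebra vanishes. So the crux is to compute $\mathrm{Lie}(\Stab(Z)) = \{v \in \mathfrak{pgl}_2 : v \cdot \mathcal{I}_Z \subseteq \mathcal{I}_Z\}$, where $\mathfrak{pgl}_2 = H^0(\Pb^1, T_{\Pb^1})$ is the three-dimensional space of global vector fields on $\Pb^1$ acting on functions by derivations. I would check this condition locally at each $p_i$: choosing a local coordinate $s$ with $p_i = V(s)$ and $\mathcal{I}_Z = (s^{e_i})$ there, and writing $v = h(s)\,\partial_s$, one computes $v(s^{e_i}) = e_i\, s^{e_i-1} h(s)$, which lies in $(s^{e_i})$ if and only if $e_i\, h(0) = 0$ in $K$. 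Thus the constraint is vacuous exactly when $\mathrm{char}(K) \mid e_i$, and otherwise forces $h(0)=0$, i.e.\ $v(p_i)=0$. This is the step I expect to be the main obstacle: one must see cleanly that the factor $e_i$ arising from differentiating $s^{e_i}$ is precisely what makes the constraint disappear in the bad characteristics. The upshot is the clean description
\[
\mathrm{Lie}(\Stab(Z)) = \{ v \in H^0(\Pb^1, T_{\Pb^1}) : v(p_i) = 0 \text{ for every } i \text{ with } \mathrm{char}(K) \nmid e_i \}\,.
\]

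I would then conclude by a dimension count, using that $T_{\Pb^1} \cong \mathcal{O}_{\Pb^1}(2)$, so a nonzero global vector field vanishes at most at two points and each condition $v(p_i)=0$ imposes at most one linear constraint on $\mathfrak{pgl}_2$. If at least three of the $p_i$ are good (that is, $\mathrm{char}(K)\nmid e_i$), then any $v$ in the Lie algebra must vanish at three distinct points, forcing $v=0$; hence $\mathrm{Lie}(\Stab(Z)) = 0$, and combined with finiteness this makes $\Stab(Z)$ étale, so reduced. Conversely, suppose at most two of the $p_i$ are good. If $m \le 2$ then $\Stab(Z)$ is infinite as above; if $m \ge 3$ then $\Stab(Z)$ is finite, but the vanishing conditions cut out a subspace of $\mathfrak{pgl}_2$ of dimension at least $3-2=1$, so $\mathrm{Lie}(\Stab(Z)) \neq 0$ and $\Stab(Z)$ is non-reduced. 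In both regimes $\Stab(Z)$ fails to be finite and reduced, which is the contrapositive. Finally, the characteristic-zero situation recovers Corollary~\ref{Cor:stabchar0}, since there every $e_i$ is good and the criterion reduces to $m \ge 3$.
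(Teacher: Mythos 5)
Your proposal is correct, and it reaches the conclusion by a genuinely different route from the paper. Both arguments agree on the reduction: dispose of $m\le 2$ by exhibiting a $\Gb_m$, use Lemma \ref{Lem:reducedinclusion} to get finiteness for $m\ge 3$, and then observe that a finite group scheme over the algebraically closed $K$ is reduced iff $T_e\Stab(Z)=0$. Where you diverge is in computing that tangent space. The paper does not touch the Lie algebra directly: it passes to the Frobenius kernel via $T_e\Stab(Z)=T_e\Stab(Z)[F]$ and then analyses $R$-points of $\Stab(Z)[F]$ for Artinian local $R$ by explicit matrix manipulations --- showing that fat points with $p\mid e_i$ are fixed by all of $\PGL_2[F]$, normalizing three good points to $0,\infty,1$ to force $b=c=0$, $d=1$, and in the degenerate case exhibiting an explicit copy of $\alpha_p$ inside the stabilizer. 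You instead identify $\mathrm{Lie}(\Stab(Z))$ with the vector fields in $\mathfrak{pgl}_2\cong H^0(\Pb^1,T_{\Pb^1})$ preserving $\mathcal{I}_Z$, and the local computation $v(s^{e_i})=e_i s^{e_i-1}h(s)$ shows in one line that the condition at $p_i$ is vacuous when $p\mid e_i$ and is $v(p_i)=0$ otherwise; the dimension count via $T_{\Pb^1}\cong\mathcal{O}_{\Pb^1}(2)$ then finishes both directions at once. Your route is shorter and more conceptual, and it yields the Lie algebra exactly rather than merely deciding whether it vanishes; its one prerequisite is the identification of the Lie algebra of the normalizer of a subscheme with the derivations preserving its ideal sheaf (standard, and covered by the reference [DG, II.1.3.4] the paper already cites, but worth a sentence). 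The paper's computation buys a bit more structural information --- the explicit infinitesimal subgroup $\alpha_p$ in the non-reduced case --- and stays entirely within the functor-of-points formalism used elsewhere in the appendix, at the cost of a longer calculation.
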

    \begin{proof}
    Having finished the case of characteristic zero in Corollary \ref{Cor:stabchar0}, we can assume $\mathrm{char}(K)=p>0$. By Lemma \ref{Lem:reducedinclusion} we have that $\Stab(Z)_{\text{red}}$ is finite and reduced. Therefore,  the same is true for $\Stab(Z)$ if and only if the tangent space to $\Stab(Z)$ at $e \in \Stab(Z)$ is trivial.

    Denote by
    \[
    F : \Stab(Z) \to \Stab(Z)^{(p)}
    \]
    the Frobenius morphism. From the fact that the derivative of $F$ at $e$ vanishes and the fact that the tangent space to the kernel $\Stab(Z)[F]$ is the kernel of the induced map of tangent spaces, it follows that
    \[
    T_e \Stab(Z) = T_e \Stab(Z)[F]\,.
    \]
    Thus we are left with showing that $\Stab(Z)[F]$ is reduced.

    For this let $R$ be an Artinian local ring over $K$ and let $g \in \PGL_2[F](R)$ be given by
    \[
    g = \begin{bmatrix}
    a & b\\ c&d
    \end{bmatrix}\text{ s.t. }F(g)=\begin{bmatrix}
    a^p & b^p\\ c^p&d^p
    \end{bmatrix} = \begin{bmatrix}
    1 & 0\\ 0&1
    \end{bmatrix} \in \PGL_2(R)\,.
    \]
    First we remark that for any $[u:v] \in \Pb^1_K$ we have that $g$ fixes the subscheme $V((vX-uY)^p) \subset \Pb^1$ and therefore any subscheme $V((vX-uY)^e)$ for $p \mid e$. Indeed we have
    \begin{align*}
        \left(v(aX+bY) - u(cX+dY) \right)^p &= v^p(a^pX^p+b^pY^p) - u^p(c^pX^p+d^pY^p) \\ &= (v^p, u^p) \cdot \begin{bmatrix}
    a^p & b^p\\ c^p&d^p
    \end{bmatrix} \cdot \begin{bmatrix}
    X^p \\ Y^p
    \end{bmatrix} \\ &\simeq  (v^p, u^p) \cdot \begin{bmatrix}
    X^p \\ Y^p
    \end{bmatrix} = (vX-uY)^p\,,
    \end{align*}
    where the symbol $\simeq$ means equality up to scaling. Therefore, all points $p_i \in Z$ with multiplicity $e_i$ divisible by $p$ are automatically left invariant under the entire group $\PGL_2[F]$. 

    Therefore assume without loss of generality that $p_1, \ldots, p_{m'}$ are the points with $e_i$ not divisible by $p$. Using the action of $\PGL_2$ we can assume that $p_1=[0:1]$, $p_2=[1:0]$ and $p_3=[1:1]$, since the stabilizer of the translation of $Z$ will be conjugate to the stabilizer of $Z$. Moreover, when considering the $R$-points of $\PGL_2[F]$ specializing to the identity over $K$, all such points must leave each individual fat point $V((u_iX+v_iY)^{e_i})$ invariant, as follows from Lemma \ref{Lem:reducedinclusion}.

    For the point $p_1 = [0:1]$ this means we require
    \[
    X^{e_i} \sim (a X + bY)^{e_i} = a^{e_i}X^{e_i} + e_i a^{e_i-1} b X^{e_i-1} Y + \ldots \,.
    \]
    From this it follows that $a$ must be invertible and, since $e_i \neq 0 \in K$ is invertible as well, we see that $b=0 \in R$. A similar computation shows that the fat point at $p_2 = [1:0]$ is invariant if and only if $c=0$. Thus in particular, for at most two points $p_i$ with $p \nmid e_i$, we have that $\Stab(Z)[F]$ contains a copy of 
    \begin{equation} \label{eqn:alphap}
    \left\{
    \begin{bmatrix}
    1 & 0\\ 0&d
    \end{bmatrix} \in \PGL_2(F)
    \right\} \cong \alpha_p
    \end{equation}
    and is thus not reduced. On the other hand, if also $p_3=[1:1]$ satisfies $p \nmid e_i$ we compute the matrices \eqref{eqn:alphap} fixing the remaining fat point. They are determined by
    \[
    (X-Y)^{e_i} = X^{e_i} - e_i X^{e_i-1}Y + \ldots  \sim (X-d Y)^{e_i} = X^{e_i} - e_i d X^{e_i-1} Y + \ldots \,.
    \]
    Comparing the coefficients of $X^{e_i}$ we see that the proportionality factor between the two sides must be $1$. Comparing the coefficients of $X^{e_i-1} Y$ shows then that $d=1$, again noting that $e_i$ is invertible. This shows that the only $R$-point of $\Stab(Z)[F]$ specializing to the identity is the identity itself, and thus $\Stab(Z)[F]$ is reduced. This finishes the proof.
    \end{proof}

    \end{appendix}

    \section*{Acknowledgements}
    The authors are indebted to Gebhard Martin for explaining to us the proofs of the results regarding the stabilizers of finite subschemes of the projective line in Appendix \ref{Sect:stabfatpoints}. We also thank Daniel Huybrechts and Burt Totaro for useful comments on a draft of this paper as well as Dori Bejleri and Changho Han for earlier helpful discussions. Jun-Yong Park was supported by the Institute for Basic Science in Korea (IBS-R003-D1) and the Max Planck Institute for Mathematics. Johannes Schmitt was supported by the SNF Early Postdoc.Mobility grant 184245 and thanks the Max Planck Institute for Mathematics in Bonn for its hospitality.

    \vspace{+16 pt}

    \noindent Jun--Yong Park \\
    \textsc{Max-Planck-Institut f\"ur Mathematik, Vivatsgasse 7 \\ 53111 Bonn, Germany} \\
      \textit{E-mail address}: \texttt{junepark@mpim-bonn.mpg.de}

    \vspace{+16 pt}

    \noindent Johannes Schmitt \\
    \textsc{Mathematical Institute, University of Bonn, Endenicher Allee 60, 53115 Bonn, Germany} \\
      \textit{E-mail address}: \texttt{schmitt@math.uni-bonn.de}

\end{document}